\pdfoutput=1
\documentclass[reqno]{amsart}
\usepackage{amsmath}
\usepackage{amsthm}
\usepackage{amssymb}
\usepackage[left=1.25in, right=1.25in]{geometry}
\usepackage{tikz}[2010/10/13]
\usepackage[all]{xy}
\usepackage{graphicx}
\usepackage{indentfirst}
\usepackage{bm}
\usepackage{mathrsfs}
\usepackage{latexsym}
\usepackage{hyperref}
\usepackage{microtype}

\newcommand{\gra}[1]{\raisebox{-.4cm}{\includegraphics[height=1cm]{TP#1.pdf}}}
\newcommand{\graa}[1]{\raisebox{-.6cm}{\includegraphics[height=1.5cm]{TP#1.pdf}}}
\newcommand{\grb}[1]{\raisebox{-.8cm}{\includegraphics[height=2cm]{TP#1.pdf}}}

\newcommand{\grd}[1]{\raisebox{-1.8cm}{\includegraphics[height=4cm]{TP#1.pdf}}}

\begin{document}
	\title{From skein theory to presentations for Thompson group}
	\author{Yunxiang Ren}
	\maketitle

	\newtheorem{Lemma}{Lemma}
	\theoremstyle{plain}
	\newtheorem{theorem}{Theorem~}[section]
	\newtheorem{main}{Main Theorem~}
	\newtheorem{lemma}[theorem]{Lemma~}
	\newtheorem{approach}[theorem]{Approach~}
	\newtheorem{assumption}[theorem]{Assumption~}
	\newtheorem{proposition}[theorem]{Proposition~}
	\newtheorem{corollary}[theorem]{Corollary~}
	\newtheorem{definition}[theorem]{Definition~}
	\newtheorem{defi}[theorem]{Definition~}
	\newtheorem{notation}[theorem]{Notation~}
	\newtheorem{example}[theorem]{Example~}
	\newtheorem*{remark}{Remark~}
	\newtheorem*{cor}{Corollary~}
	\newtheorem*{question}{Question}
	\newtheorem*{claim}{Claim}
	\newtheorem*{conjecture}{Conjecture~}
	\newtheorem*{fact}{Fact~}
	\renewcommand{\proofname}{\bf Proof}
	
	\begin{abstract}
		Jones introduced unitary representations of Thompson group $F$ constructed from a given subfactor planar algebra, and all unoriented links arise as matrix coefficients of these representations. Moreover, all oriented links arise as matrix coefficients of a subgroup $\vec{F}$ which is the stabilizer of a certain vector. Later Golan and Sapir determined the subgroup $\vec{F}$ and showed many interesting properties. In this paper, we investigate into a large class of groups which arises as subgroups of Thompson group $F$ and reveal the relation between the skein theory of the subfactor planar algebra and the presentation of subgroup related to the corresponding unitary representation. Specifically, we answer a question by Jones about the 3-colorable subgroup.
	\end{abstract}
	
	\section{introduction}
	Jones initiated the modern theory of subfactors to study quantum symmetry \cite{Jon83}.  The standard invariant of a subfactor is the lattice of higher relative commutants of the Jones tower.  A deep theorem of Popa says the standard invariants completely classify strongly amenable subfactors \cite{Pop94}. In particular, the $A$, $D$, $E$ classification of subfactors up to index 4 is a quantum analog of Mckay correspondence.
	Moreover, Popa introduced standard $\lambda$-lattices as an axiomatization of the standard invariant \cite{Pop95}, which completes Ocneanu's axiomatization for finite depth subfactors \cite{Ocn88}.
	
	Jones introduced (subfactor) planar algebras as an axiomatization of the standard invariant of subfactors, which capture its ulterior topological properties.
	He suggested studying planar algebras by skein theory, a presentation theory which allows one to completely describe the entire planar algebra in terms of generators and relations, both algebraic and topological.
	
	An $m$-box generator for a planar algebra is usually represented by a $2m$-valent labeled vertex in a disc, with a choice of alternating shading of the planar regions.  A planar algebra is a representation of \textit{fully labeled planar tangles} on vector spaces in the flavor of TQFT \cite{Ati88}, in the sense that 
	the representation is well defined up to isotopy, and the target vector space only depends on the boundary conditions of diagrams.
	In particular, diagrams without boundary are mapped to the ground field, yielding a map called the partition function.
	A planar algebra is called a subfactor planar algebra if its partition function is positive definite. In this case, the vector spaces become Hilbert spaces.
	
	A skein theory for a planar algebra is a presentation given in terms of generators and relations, such that the partition function of a closed diagram labeled by the generators can be evaluated using only the prescribed relations.  This type of description of a planar algebra is analogous to presentations in combinatorial group theory.  With an evaluation algorithm in hand, a  skein theory uniquely determines a planar algebra, and  therefore one can ask for a skein theoretic classification of planar algebras as suggested by Bisch and Jones \cite{BisJon00,BisJon03}. Bisch and Jones initiated the classification of planar algebras generated by a single 2-box \cite{BisJon00}. Based on the subsequent work of Bisch, Jones and Liu \cite{BisJon03,BJL}, a classification of singly generated Yang-Baxter relation planar algebra was achieved in \cite{LiuYB}, where a new family of planar algebras was constructed. Planar algebras with multiple 2-box generators were discussed in \cite{Liuex}. Planar algebras generated by a single 3-box are discussed by C.Jones, Liu, and the author \cite{JLR}. We summarize the conditions of a skein theory for a subfactor planar algebra as follows:
	
	\begin{itemize}
		\item (Evaluation) There exists an evaluation algorithm for diagrams in $\mathscr{P}_{0,\pm}$ and $\dim\mathscr{P}_{n,\pm}<\infty$.
		\item (Consistency) The evaluation is consistent.
		\item (Positivity) There exists a positive semidefinite trace on $\mathscr{P}_\bullet$.
	\end{itemize}
	In this paper, the main skein theory we use in this paper is the vertical isotopy which is encoded in the definition of isotopy:
	\begin{definition}[Vertical isotopy]
		Suppose $X$ is an $(m,n)-$tangle and $Y$ is an $(k,l)-$tangle, then we have 
		\begin{equation*}
				\begin{tikzpicture}
				\draw (0,0)--(1,0);
				\draw (1,0)--(1,0.6);
				\draw (1,0.6)--(0,0.6);
				\draw (0,0)--(0,0.6);
				\node at (0.5,0.3) {$X$};
				\draw (0.2,0)--(0.2,-1);
				\draw (0.4,0)--(0.4,-1);
				\draw (0.8,0)--(0.8,-1);
				\draw (0.2,0)--(0.2,-1);
				\draw (0.4,0)--(0.4,-1);
				\draw (0.8,0)--(0.8,-1);
				\draw (.2,.6)--(.2,.8);
				\draw (.4,.6)--(.4,.8);
				\draw (.8,.6)--(.8,.8);
				\draw [fill] (.5,-0.5) circle [radius=0.02];
				\draw [fill] (.6,-0.5) circle [radius=0.02];
				\draw [fill] (.7,-0.5) circle [radius=0.02];
					\draw (1.5,0)--(2.5,0);
					\draw (2.5,0)--(2.5,-0.6);
					\draw (2.5,-0.6)--(1.5,-0.6);
					\draw (1.5,0)--(1.5,-0.6);
					\node at (2,-.3) {$Y$};
						\draw (1.5+0.2,-.6)--(1.5+0.2,-1);
						\draw (1.5+0.4,-.6)--(1.5+0.4,-1);
						\draw (1.5+0.8,-.6)--(1.5+0.8,-1);
						\draw (1.5+0.2,-.6)--(1.5+0.2,-1);
						\draw (1.5+0.4,-.6)--(1.5+0.4,-1);
						\draw (1.5+0.8,-.6)--(1.5+0.8,-1);
						\draw (1.5+.2,0)--(1.5+.2,.8);
						\draw (1.5+.4,0)--(1.5+.4,.8);
						\draw (1.5+.8,0)--(1.5+.8,.8);
						\draw [fill] (1.5+.5,.4) circle [radius=0.02];
						\draw [fill] (1.5+.6,.4) circle [radius=0.02];
						\draw [fill] (1.5+.7,.4) circle [radius=0.02];
					\node at (3,-.1) {$=$};
						\draw (6-0,0)--(6-1,0);
						\draw (6-1,0)--(6-1,0.6);
						\draw (6-1,0.6)--(6-0,0.6);
						\draw (6-0,0)--(6-0,0.6);
						\node at (6-0.5,0.3) {$Y$};
						\draw (6-0.2,0)--(6-0.2,-1);
						\draw (6-0.4,0)--(6-0.4,-1);
						\draw (6-0.8,0)--(6-0.8,-1);
						\draw (6-0.2,0)--(6-0.2,-1);
						\draw (6-0.4,0)--(6-0.4,-1);
						\draw (6-0.8,0)--(6-0.8,-1);
						\draw (6-.2,.6)--(6-.2,.8);
						\draw (6-.4,.6)--(6-.4,.8);
						\draw (6-.8,.6)--(6-.8,.8);
						\draw [fill] (6-.5,-0.5) circle [radius=0.02];
						\draw [fill] (6-.6,-0.5) circle [radius=0.02];
						\draw [fill] (6-.7,-0.5) circle [radius=0.02];
						\draw (6-1.5,0)--(6-2.5,0);
						\draw (6-2.5,0)--(6-2.5,-0.6);
						\draw (6-2.5,-0.6)--(6-1.5,-0.6);
						\draw (6-1.5,0)--(6-1.5,-0.6);
						\node at (6-2,-.3) {$X$};
						\draw (6-1.5-0.2,-.6)--(6-1.5-0.2,-1);
						\draw (6-1.5-0.4,-.6)--(6-1.5-0.4,-1);
						\draw (6-1.5-0.8,-.6)--(6-1.5-0.8,-1);
						\draw (6-1.5-0.2,-.6)--(6-1.5-0.2,-1);
						\draw (6-1.5-0.4,-.6)--(6-1.5-0.4,-1);
						\draw (6-1.5-0.8,-.6)--(6-1.5-0.8,-1);
						\draw (6-1.5-.2,0)--(6-1.5-.2,.8);
						\draw (6-1.5-.4,0)--(6-1.5-.4,.8);
						\draw (6-1.5-.8,0)--(6-1.5-.8,.8);
						\draw [fill] (6-1.5-.5,.4) circle [radius=0.02];
						\draw [fill] (6-1.5-.6,.4) circle [radius=0.02];
						\draw [fill] (6-1.5-.7,.4) circle [radius=0.02];
					
				\end{tikzpicture}		
		\end{equation*}
	\end{definition}
	
	Jones introduced unitary representations for Thompson groups \cite{JonTP} motivated by the idea of constructing a conformal field theory for every finite index subfactor in such a way that the standard invariant of the subfactor, or at least the quantum double, can be recovered from the CFT. Following the idea of block spin renormalization, one can construct a Hilbert space from the initial data of a subfactor planar algebra on which Thompson groups $F$ and $T$ have an action. Due to the intrinsic structure of a subfactor planar algebra, one can obtain unitary representations of $F$ and $T$. A significant result is that every unoriented link arises as matrix coefficients of these representations of $F$. Furthermore, all oriented links arise as matrix coefficients of a subgroup $F$, denoted by $\vec{F}$, defined as a stabilizer of a certain vector from these representations, which we call the vacuum vector. Golan and Sapir completely determined this subgroup and revealed many interesting properties \cite{GoSa2015}. In particular, they showed that $\vec{F}$ is
	isomorphic $F_3$, where the Thompson group $F_N$ for $N\in\mathbb{N}$ with $N\geq2$ is defined as 
	\begin{equation*}
	F_N\cong\langle x_1,x_2,\cdots\vert x_k^{-1}x_nx_k=x_{n+N-1}\rangle.
	\end{equation*}
	
	In this paper, we study subgroups of Thompson group $F$ as the stabilizer of the vacuum vector of the unitary representations constructing from subfactor planar algebras. We show that the presentation of these subgroups is encoded with the skein theory of the subfactor planar algebras. In particular, we study a family of subgroups of Thompson group $F$ called singly generated subgroups as an analogy of singly generated planar algebra. 
	
	\begin{theorem}\label{thm:main1}
		The singly generated subgroup with an $(1,N)$-tangle is isomorphic to $F_N$.
	\end{theorem}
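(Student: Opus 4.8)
The plan is to exhibit a homomorphism $\phi\colon F_N\to F$ and prove that it is injective with image the singly generated subgroup $H$ attached to the $(1,N)$-tangle $\mathcal{T}$. By the description of $H$ recalled above, an element of $H$ is represented by a pair $(S_+,S_-)$ of rooted planar binary trees, each obtained from the trivial tree by iterated grafting of copies of $\mathcal{T}$ onto leaves --- call such a tree a \emph{$\mathcal{T}$-tree} --- two pairs being identified when they admit a common expansion. This is formally the same kind of data as the tree-pair model of the Brown--Thompson group $F_N$, with the $N$-ary caret playing the role of $\mathcal{T}$. Accordingly I would build $\phi$ at the level of forest categories: send the $N$-ary caret to $\mathcal{T}$, and hence an $N$-ary forest to the binary forest obtained by substituting a copy of $\mathcal{T}$ for every caret. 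Since this substitution is compatible with composition of forests and preserves the number of leaves, it is a functor, so it induces a homomorphism between the associated groups of fractions $F_N\to F$; in particular the defining relations $x_k^{-1}x_nx_k=x_{n+N-1}$ require no separate check, holding already in the ambient fraction groupoid. Its image is exactly $H$, because every $\mathcal{T}$-tree is by construction the image of an $N$-ary tree.

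The heart of the argument is injectivity, and this is where the skein-theoretic content --- consistency of the evaluation, here incarnated as the vertical-isotopy relation --- returns as a purely combinatorial uniqueness statement. The lemma I would isolate is: \emph{a binary tree that admits a decomposition into copies of $\mathcal{T}$ admits only one}. I would prove it by a top-down induction on the number of internal vertices: in any such tiling, the tile containing the root is forced, since a rooted planar tree has no nontrivial automorphism, so there is at most one way for the top of the big tree to be an embedded copy of $\mathcal{T}$; after deleting that tile one is left with strictly smaller, again $\mathcal{T}$-tileable, subtrees, and one recurses. Granting the lemma, suppose $g\in\ker\phi$ and pick any tree-pair representative $(S_+,S_-)$ of $g$. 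Then $\phi(g)$ is represented by $(\widehat{S_+},\widehat{S_-})$, where $\widehat{(\,\cdot\,)}$ is the caret-by-$\mathcal{T}$ substitution; as this represents the identity of $F$, and a pair of binary trees with the same number of leaves represents $1\in F$ precisely when the two trees coincide (the forest monoid being right-cancellative), we get $\widehat{S_+}=\widehat{S_-}$. Each of $\widehat{S_\pm}$ carries the $\mathcal{T}$-tiling that records $S_\pm$; by the uniqueness lemma these tilings agree, so $S_+=S_-$ as $N$-ary trees and $g=1$. Hence $\phi$ is injective and $H\cong F_N$.

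I expect the real obstacle to sit in one of two places. First, if $H$ is taken as defined intrinsically --- as the stabilizer of the vacuum vector of the representation produced from $\mathcal{T}$, rather than directly as the set of $\mathcal{T}$-tree-pairs --- then the substantive step is to prove that this stabilizer is precisely the $\mathcal{T}$-tree-pair subgroup; this is exactly where the skein theory of the planar algebra must be used, as one has to show that two trees send the vacuum to the same vector if and only if they differ by moves corresponding, under the dictionary, to $\mathcal{T}$-expansions, and it is the vertical-isotopy relation that makes this work. Second, one should check the uniqueness-of-tiling lemma in full generality, ruling out accidental self-overlaps of the shape of $\mathcal{T}$ that could produce an inequivalent tiling; I expect this to be routine but it deserves care. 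As a hedge, there is an alternative route to injectivity that bypasses the tiling lemma: $F_N$ acts faithfully on $[0,1]$ by $N$-adic piecewise-linear homeomorphisms, and $\phi$ intertwines this with the standard $F$-action on $[0,1]$ after conjugating by the (in general non-piecewise-linear, but still genuine) self-homeomorphism of $[0,1]$ that carries the uniform $N$-adic subdivisions onto the dyadic subdivisions dictated by $\mathcal{T}$; faithfulness of the $F_N$-action then forces $\ker\phi=1$.
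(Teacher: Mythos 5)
Your proposal is correct in substance, but it is organized quite differently from the paper's argument, so a comparison is worth recording. The paper works entirely inside the tree-pair model: it defines $G_X$ as equivalence classes of pairs from $Alg(X)$, extracts explicit generators $x_n=(S_{a(n)}\cdot X_n,S_{a(n)+1})$, verifies the relation $x_k^{-1}x_nx_k=x_{n+N-1}$ by the vertical-isotopy computation of Proposition \ref{pro:verticalisotopy}, and then proves injectivity of the resulting surjection $F_N\to G_X$ by reducing to positive elements and rewriting a positive word $h$ step by step (using the defining relations to shuffle a generator $t_n$ past its neighbours) until it literally matches $g$ (Lemma \ref{lem:Relation} and Theorem \ref{thm:main}). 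You instead package the positive monoid as a forest category, obtain the homomorphism $F_N\to G_X$ for free from the caret-by-$X$ substitution functor and the calculus of fractions (so the relation check disappears), and replace the word-rewriting argument by the single combinatorial lemma that a tree admitting a decomposition into copies of $X$ admits exactly one, the root tile being forced. These are two phrasings of the same underlying fact --- uniqueness of the $N$-ary tree underlying a positive element --- but your tiling lemma isolates it more cleanly than the paper's induction on word length, and your functorial setup avoids the explicit generator bookkeeping; what the paper's route buys in exchange is the explicit classical presentation by the $x_n$, which it needs anyway to match $G_X$ against the standard presentation of $F_N$. Two caveats on scope: first, the paper's $X$ is an arbitrary $(1,N)$-tangle, not necessarily a binary tree, so your target should be the tangle category $Alg(X)$ rather than $F$ itself (the tiling-uniqueness lemma survives this generalization, since the $X$-boxes and their connectivity are visotopy invariants of the diagram, and the residual ambiguity is exactly the far-commutation relation that is quotiented out at the level of $N$-ary trees); second, you are right that identifying the abstract group $G_X$ with the stabilizer of the vacuum vector is a separate matter --- in the paper that identification is carried out case by case in \S\ref{sec:example} via the coloring lemmas, and is not part of Theorem \ref{thm:main1}, so your proof is correctly scoped to the tree-pair group.
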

	In particular, we apply the techniques to answer the question by Jones about the $3$-colorable subgroup and have the following theorem:
	\begin{theorem}
		The $3$-colorable subgroup is isomorphic to $F_4$. 
	\end{theorem}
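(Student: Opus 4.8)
The strategy is to realize the $3$-colorable subgroup $\mathcal{F}$ as one of the singly generated subgroups studied in this paper, with generator an explicit $(1,4)$-tangle, and then to invoke Theorem~\ref{thm:main1}. By definition $\mathcal{F}$ is the stabilizer of the vacuum vector $\Omega$ in the unitary representation of $F$ attached to the planar algebra $\mathscr{P}$ governing proper $3$-colorings; concretely, an element $g=(t_+,t_-)\in F$ --- a pair of binary trees with the same number of leaves --- lies in $\mathcal{F}$ exactly when the matrix coefficient $\langle\pi(g)\Omega,\Omega\rangle$ equals the normalization, and this coefficient is a weighted count of the $3$-colorings of the closed diagram obtained by gluing $t_+$ to the reflection of $t_-$. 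The first step is to write down the skein theory of $\mathscr{P}$: its loop parameter, its distinguished generator, and the short list of local relations that render the $3$-coloring count computable (the Evaluation, Consistency, and Positivity conditions from the introduction). The structural point is that an individual binary caret does not interact cleanly with the $3$-coloring functional, whereas a block of binary carets producing four leaves --- a $4$-ary caret --- does; equivalently, the tangle controlling membership in $\mathrm{Stab}(\Omega)$ is a $(1,4)$-tangle $R$, the ``$3$-colored $Y$-caret together with its color data''. One then checks that $R$ together with the Temperley--Lieb part generates $\mathscr{P}$ with a terminating evaluation algorithm, so that $R$ defines a singly generated subgroup in the required sense.

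The combinatorial core is to identify this singly generated subgroup with $\mathcal{F}$ on the nose. In one direction, if $g$ lies in the standard copy of $F_4$ inside $F$ --- meaning $t_+$ and $t_-$ have a common refinement built entirely from $4$-ary carets --- then the closed diagram computing $\langle\pi(g)\Omega,\Omega\rangle$ is assembled from copies of $R$ and its adjoint, and the $3$-coloring count collapses to the normalization by the prescribed relations; hence $g\in\mathcal{F}$. In the other direction one must rule out exotic vacuum-stabilizers: if $g\notin F_4$, then the reduced form of $g$ contains a binary caret that is unpaired relative to the $4$-ary structure, and one exhibits a $3$-coloring of the associated closed diagram whose contribution survives, forcing $\langle\pi(g)\Omega,\Omega\rangle$ away from the normalization. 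Thus $\mathcal{F}$ coincides with the singly generated subgroup of $R$, and Theorem~\ref{thm:main1} with $N=4$ yields $\mathcal{F}\cong F_4\cong\langle x_1,x_2,\dots\mid x_k^{-1}x_nx_k=x_{n+3}\rangle$. Tracing the generators through the identification, one can write down explicit tree-pair representatives of $x_1,x_2,\dots$ and verify the relations diagrammatically, which makes the answer to Jones' question completely concrete.

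The main obstacle is twofold. First, pinning down the correct $(1,4)$-tangle $R$ and proving that the $3$-coloring partition function of every closed diagram built from it reduces to a scalar using only the stated relations --- this is a genuine skein-theoretic evaluation argument, parallel to the one underlying Theorem~\ref{thm:main1}, and it is precisely here that the value $4$, rather than $3$, is forced. Second, the converse inclusion: excluding exotic elements of $\mathrm{Stab}(\Omega)$ requires showing that the $3$-coloring functional separates the vectors $\pi(t)\Omega$ finely enough, which calls for a judicious choice of test colorings. The remaining bookkeeping --- making the embedding $F_4\hookrightarrow F$ precise and compatible with the diagram calculus, so that the hypotheses of Theorem~\ref{thm:main1} are met verbatim --- is routine but must be handled carefully.
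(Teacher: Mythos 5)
Your overall strategy coincides with the paper's: exhibit $\mathcal{F}$ as the singly generated subgroup $G_X$ of an explicit $(1,4)$-tangle and then quote Theorem~\ref{thm:main}. However, the proposal stops at the point where the actual work begins, and the one concrete mechanism you do propose for the hard inclusion is logically backwards. By the chromatic interpretation of the matrix coefficient, $\langle\pi(g)\xi,\xi\rangle$ equals $1$ exactly when the dual graph $\Gamma(T_+,T_-)$ admits a proper $3$-coloring and $0$ otherwise; since $\pi(g)$ is unitary and $\xi$ is a unit vector, $g\in\mathcal{F}$ if and only if such a coloring \emph{exists}. So ``exhibiting a $3$-coloring whose contribution survives'' would prove $g\in\mathcal{F}$, not exclude it; to rule out exotic stabilizers you must instead start from the existence of a $3$-coloring and \emph{produce} a representative of $g$ lying in $Alg(X)\times Alg(X)$. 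Relatedly, membership in $G_X$ is a statement about the existence of \emph{some} (generally non-reduced) tree-pair representative built from copies of $X$, not about the reduced form of $g$; an element of $\mathcal{F}$ need not have its reduced tree pair decomposable into $4$-leaf blocks, so arguing from ``the reduced form contains an unpaired binary caret'' does not work as stated.

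What is missing is precisely the combinatorial core of the paper's argument, which consists of two lemmas. First (Lemma~\ref{lem:acb}): any $g\in\mathcal{F}$ admits a refined tree-pair representative $(T_+,T_-)$ whose dual-graph coloring is the standard periodic word $acbacb\cdots ac$; this is achieved by inserting carets simultaneously into $T_+$ and $T_-$ at every position where the coloring deviates from the pattern, which changes the representative but not the group element. Second, an induction on the number of leaves showing that any binary tree whose dual coloring is standard must contain a balanced depth-two caret (the concrete $(1,4)$-tangle $X$, namely a root whose two children each carry two leaves), and hence lies in $Alg(X)_{3n+1}$. The period-$3$ pattern $acb$ is what forces $N=4$ here --- four leaves per block, three new intervals --- rather than any skein-theoretic evaluation of closed diagrams, which plays no role in this part of the argument. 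Without these two steps the identification $\mathcal{F}=G_X$ is asserted rather than proved, so the appeal to Theorem~\ref{thm:main} is not yet justified.
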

	The paper is organized as follows. In \S \ref{sec:pre} we recall the definition of unitary representation for Thompson group $F$ \cite{JonTP}. In \S \ref{sec:def} we give the definition of singly generated groups. In \S \ref{sec:presentation} we introduce a classical presentation of singly generated groups and prove Theorem \ref{thm:main1}. In \S \ref{sec:example} we provide examples of singly generated groups: in \S \ref{sec:vecF} we provide a proof of that $\vec{F}\cong F_3$ from the topological viewpoint; in \S \ref{sec:3color} we show that the $3$-colorable subgroup is isomorphic to $F_4$.

	\section{preliminary}\label{sec:pre}
	We refer readers to \cite{JonPA} for details in planar algebras. In this section we recall the construction of unitary representations for Thompson group $F$ \cite{JonTP}.
	\begin{definition}[Standard dyadic partition]
		We say $\mathcal{I}$ is a standard dyadic partition of $[0,1]$ if for each $I\in\mathcal{I}$ there exists $a,p\in\mathbb{N}$ such that $I=\left[\displaystyle\frac{a}{2^p},\displaystyle\frac{a+1}{2^p}\right]$. We denote the set of all standard dyadic partitions by $\mathcal{D}$ and define $\mathcal{I}\lesssim\mathcal{J}$ for $\mathcal{I},\mathcal{J}\in\mathcal{D}$ if $\mathcal{J}$ is a refinement of $\mathcal{I}$. 
	\end{definition}
	\begin{proposition}
		For each $g\in F$, there exists $\mathcal{I}\in\mathcal{D}$ such that $g(\mathcal{I})\in\mathcal{D}$ and on each $I\in\mathcal{I}$ the slope of $g$ is a constant. We say such an $\mathcal{I}$ is "good" for $g$. 
	\end{proposition}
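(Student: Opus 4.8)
The plan is to use the defining description of $F$ as the group of orientation-preserving piecewise-linear self-homeomorphisms of $[0,1]$ that fix $0$ and $1$, have only finitely many breakpoints, all lying in the dyadic rationals $\mathbb{Z}[1/2]$, and have every slope a power of $2$. Given such a $g$, I would first record the elementary observation that $g$ sends dyadic rationals to dyadic rationals: if $0 = t_0 < t_1 < \dots < t_n = 1$ lists the breakpoints of $g$, so that $g$ is affine with slope $2^{k_i}$ on each $[t_{i-1},t_i]$, then $g(t_0)=0$ and inductively $g(t_i) = g(t_{i-1}) + 2^{k_i}(t_i - t_{i-1}) \in \mathbb{Z}[1/2]$; write $s_i := g(t_i)$. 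The idea is then to realize the desired $\mathcal{I}$ as a single uniform dyadic grid, chosen fine enough both to refine the breakpoint partition $\{[t_{i-1},t_i]\}_i$ and to be carried by $g$ onto another such grid.

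Concretely, I would choose $p \in \mathbb{N}$ large enough that $2^{p} t_i \in \mathbb{Z}$ for every $i$ and $2^{\,p-k_i} s_{i-1} \in \mathbb{Z}$ for every $i \in \{1,\dots,n\}$; this is possible since there are only finitely many such conditions. Let $\mathcal{I} := \{[a/2^p,(a+1)/2^p] : 0 \le a < 2^p\}$. This is a standard dyadic partition, and since each $t_i \in 2^{-p}\mathbb{Z}$ it refines the breakpoint partition of $g$, so $g$ has constant slope on every $I \in \mathcal{I}$. Finally, for $I = [a/2^p,(a+1)/2^p] \subseteq [t_{i-1},t_i]$ one computes $g(I) = [\,g(a/2^p),\, g(a/2^p) + 2^{\,k_i - p}\,]$ with $g(a/2^p) = s_{i-1} + 2^{k_i}(a/2^p - t_{i-1}) \in 2^{-(p-k_i)}\mathbb{Z}$ by the choice of $p$ (using $a/2^p - t_{i-1} \in 2^{-p}\mathbb{Z}$); since the length of $g(I)$ is $2^{\,k_i-p} = 2^{-(p-k_i)}$, it follows that $g(I) = [b/2^{\,p-k_i},(b+1)/2^{\,p-k_i}]$ for the appropriate integer $b\ge 0$, so $g(\mathcal{I}) \in \mathcal{D}$.

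The only genuine point, and the step I would be most careful with, is the quantitative choice of $p$: it is \emph{not} enough to refine until the breakpoints $t_i$ and their images $s_i$ all sit on one common dyadic grid. The exponent $p$ must in addition dominate the slope exponents — more precisely one needs $p \ge k_i + \operatorname{depth}(s_{i-1})$ on the pieces with $k_i > 0$, where $\operatorname{depth}(x)$ is the least $q$ with $x \in 2^{-q}\mathbb{Z}$ — because an affine map of slope $2^{k_i}$ carries a level-$p$ dyadic interval to a level-$(p-k_i)$ interval, and such an interval is a \emph{standard} dyadic interval only when its left endpoint already lies at level $p-k_i$; when $k_i \le 0$ the requirement is automatically weaker. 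One could instead deduce the proposition from the standard tree-pair normal form for elements of $F$ — reading the source tree off a reduced pair yields exactly such an $\mathcal{I}$, and the target tree yields $g(\mathcal{I})$ — but the direct grid argument above keeps the discussion self-contained and makes the role of the slopes explicit.
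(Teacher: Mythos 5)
Your argument is correct. Note, though, that the paper itself offers no proof of this proposition: it is recorded as a standard fact about Thompson's group $F$ (equivalent to the usual tree-pair / dyadic-rearrangement description), so there is no in-paper argument to compare against. Your direct construction --- passing to a uniform level-$p$ grid with $p$ chosen so that $2^p t_i\in\mathbb{Z}$ and $2^{p-k_i}s_{i-1}\in\mathbb{Z}$ for all pieces --- is a valid, self-contained verification, and you are right to flag the one genuine subtlety: merely placing all breakpoints and their images on a common dyadic grid is not enough, since an affine piece of slope $2^{k_i}$ sends a level-$p$ standard interval to a level-$(p-k_i)$ interval, which is \emph{standard} only if its left endpoint already lies in $2^{-(p-k_i)}\mathbb{Z}$; your extra divisibility condition on $s_{i-1}$ is exactly what guarantees this. (One unstated but automatic point: $p\ge k_i$ holds once the $t_i$ lie on the level-$p$ grid, since $2^{k_i}(t_i-t_{i-1})\le 1$ forces $k_i\le p$, so $p-k_i$ is indeed a valid level.) The alternative you mention --- reading $\mathcal{I}$ and $g(\mathcal{I})$ off a reduced tree pair --- is the route implicitly taken by the paper's diagrammatic description of $F$ in the same section.
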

	
	There is a well known diagrammatic description of Thompson group $F$ \cite{BieS14}. For $\mathcal{I}\lesssim\mathcal{J}\in\mathcal{D}$, one can use binary a binary forest to represent the inclusion of $\mathcal{I}$ in $\mathcal{J}$ and $\mathcal{I}$ is "good" for some $g\in F$. For instance, $\mathcal{I}=\{[0,\displaystyle\frac{1}{2}],[\displaystyle\frac{1}{2},1]\}$, $\mathcal{J}=\{[0,\displaystyle\frac{1}{2}],[\displaystyle\frac{1}{2},\displaystyle\frac{3}{4}],[\displaystyle\frac{3}{4},1]\}$ and $g$ is the function
	\begin{align*}
	g(x)=
	\begin{cases}
	x/2,  & 0\leq x\leq \displaystyle\frac{1}{2}\\
	x-\displaystyle\frac{1}{4}, &\displaystyle\frac{1}{2}<x\leq \displaystyle\frac{3}{4}\\
	2x-1, &\displaystyle\frac{3}{4}<x\leq 1
	\end{cases}
	\end{align*}
	\begin{equation*}
	\begin{tikzpicture}
	\draw (0,0)--(2.5,0);
	\draw (0,-1.25)--(2.5,-1.25);
	\draw [red] (0,-.1)--(0,.1);
	\draw [red] (2.5*.5,-.1)--(2.5*.5,.1);
	\draw [red] (2.5,-.1)--(2.5,.1);
		\draw [red] (0,-.1-1.25)--(0,.1-1.25);
		\draw [red] (2.5*.5,-.1-1.25)--(2.5*.5,.1-1.25);
		\draw [red] (2.5,-.1-1.25)--(2.5,.1-1.25);
		\draw [red] (2.5*.75,-.1-1.25)--(2.5*.75,.1-1.25);
		\draw (2.5*.25,0)--(2.5*.25,-1.25);
		\draw (2.5*.75,0)--(2.5*.75,-.5);
		\draw (2.5*.625,-1.25)--(2.5*.625,-.75);
		\draw (2.5*.875,-1.25)--(2.5*.875,-.75);
		\draw (2.5*.75,-.6125)--(2.5*.75,-.5);
		\draw (2.5*.75,-.6125)--(2.5*.625,-.75);
		\draw (2.5*.75,-.6125)--(2.5*.875,-.75);
		\node [left] at (0,0) {$\mathcal{I}$};
		\node [left] at (0,-1.25) {$\mathcal{J}$};
		\node [left] at (-.5,-.6125) {$\mathcal{F}^\mathcal{I}_\mathcal{J}=$};
		 \draw (6+0,0)--(6+2.5,0);
		 \draw (6+0,-1.25)--(6+2.5,-1.25);
		 	\draw [red] (6+0,-.1)--(6+0,.1);
		 	\draw [red] (6+2.5*.5,-.1)--(6+2.5*.5,.1);
		 	\draw [red] (6+2.5,-.1)--(6+2.5,.1);
		 	\draw [red] (6+2.5*.75,-.1)--(6+2.5*.75,.1);
		 	\draw [red] (6+0,-.1-1.25)--(6+0,.1-1.25);
		 	\draw [red] (6+2.5*.5,-.1-1.25)--(6+2.5*.5,.1-1.25);
		 	\draw [red] (6+2.5,-.1-1.25)--(6+2.5,.1-1.25);
		 	\draw [red] (6+2.5*.25,-.1-1.25)--(6+2.5*.25,.1-1.25);
		 	\draw (6+2.5*.25,0)--(6+2.5*.125,-1.25);
		 	\draw (6+2.5*.625,0)--(6+2.5*.375,-1.25);
		 	\draw (6+2.5*.875,0)--(6+2.5*.75,-1.25);
		 	\node [left] at (6,0) {$\mathcal{J}$};
		 	\node [left] at (6,-1.25) {$g(\mathcal{J})$};
		 	\node [left] at (5.5,-.6125) {$g_\mathcal{J}=$};
		 \end{tikzpicture}
	\end{equation*}
		
	We construct a Hilbert space given a subfactor planar algebra $\mathscr{P}_\bullet$ \cite{JonTP} \cite{JonNogo}. In particular we introduce the following two approaches.
	\begin{approach}\label{app1}
		We start with $\mathscr{P}_\bullet$ a positive-definite planar algebra with a normalised trivalent vertex $S$, i.e, 
		\begin{equation*}
		\begin{tikzpicture}
		\draw (0,0) circle [radius=.5];
		\draw (0,.5)--(0,1);
		\draw (0,-.5)--(0,-1);
			\node [right] at (.5,0) {$=$};
			\draw (1.5,1)--(1.5,-1);
			\node [below] at (0,.5) {\tiny $S$};
			\node [above] at (0,-.5) {\tiny $S^*$};
		\end{tikzpicture}
		\end{equation*}
	
		For each $\mathcal{I}\in\mathcal{D}$, we set $\mathcal{H}(\mathcal{I})=\mathscr{P}_{M(\mathcal{I})}$, where $M(\mathcal{I})$ is the number of midpoints of the intervals of $\mathcal{I}$ and define the inclusion and action as 
		\begin{equation*}
		\begin{tikzpicture}
		\draw (0,0)--(2.5,0);
		\draw (0,-1.25)--(2.5,-1.25);
		\draw [red] (0,-.1)--(0,.1);
		\draw [red] (2.5*.5,-.1)--(2.5*.5,.1);
		\draw [red] (2.5,-.1)--(2.5,.1);
		\draw [red] (0,-.1-1.25)--(0,.1-1.25);
		\draw [red] (2.5*.5,-.1-1.25)--(2.5*.5,.1-1.25);
		\draw [red] (2.5,-.1-1.25)--(2.5,.1-1.25);
		\draw [red] (2.5*.75,-.1-1.25)--(2.5*.75,.1-1.25);
		\draw (2.5*.25,0)--(2.5*.25,-1.25);
		\draw (2.5*.75,0)--(2.5*.75,-.5);
		\draw (2.5*.625,-1.25)--(2.5*.625,-.75);
		\draw (2.5*.875,-1.25)--(2.5*.875,-.75);
		\draw (2.5*.75,-.6125)--(2.5*.75,-.5);
		\draw (2.5*.75,-.6125)--(2.5*.625,-.75);
		\draw (2.5*.75,-.6125)--(2.5*.875,-.75);
		\node [left] at (0,0) {$\mathcal{I}$};
		\node [left] at (0,-1.25) {$\mathcal{J}$};
		\node [left] at (-.5,-.6125) {$T^\mathcal{I}_\mathcal{J}=$};
		\node [below] at (2.5*.75,-.65) {\tiny $S$};
		\draw (6+0,0)--(6+2.5,0);
		\draw (6+0,-1.25)--(6+2.5,-1.25);
		\draw [red] (6+0,-.1)--(6+0,.1);
		\draw [red] (6+2.5*.5,-.1)--(6+2.5*.5,.1);
		\draw [red] (6+2.5,-.1)--(6+2.5,.1);
		\draw [red] (6+2.5*.75,-.1)--(6+2.5*.75,.1);
		\draw [red] (6+0,-.1-1.25)--(6+0,.1-1.25);
		\draw [red] (6+2.5*.5,-.1-1.25)--(6+2.5*.5,.1-1.25);
		\draw [red] (6+2.5,-.1-1.25)--(6+2.5,.1-1.25);
		\draw [red] (6+2.5*.25,-.1-1.25)--(6+2.5*.25,.1-1.25);
		\draw (6+2.5*.25,0)--(6+2.5*.125,-1.25);
		\draw (6+2.5*.625,0)--(6+2.5*.375,-1.25);
		\draw (6+2.5*.875,0)--(6+2.5*.75,-1.25);
		\node [left] at (6,0) {$\mathcal{J}$};
		\node [left] at (6,-1.25) {$g(\mathcal{J})$};
		\node [left] at (5.5,-.6125) {$g_\mathcal{J}=$};
		\end{tikzpicture}
		\end{equation*}
	\end{approach}
	\begin{approach}\label{app2}
		We start with $\mathscr{P}_\bullet$ a positive planar algebra with a normalised $2$-box $R$, i.e, 
		\begin{equation*}
		\begin{tikzpicture}
		\draw (0,0) circle [radius=.5];
		\draw (0,1)--(0,-1);
		\node [right] at (.5,0) {$=$};
		\draw (1.5,1)--(1.5,-1);
		\node [below left] at (0,.5) {\tiny $R$};
		\node [above left] at (0.15,-.5) {\tiny $R^*$};		
		\end{tikzpicture}
		\end{equation*}
				
		For each $\mathcal{I}\in\mathcal{D}$, we set $\mathcal{H}(\mathcal{I})=\mathscr{P}_{E(\mathcal{I})}$, where $E(\mathcal{I})$ is the number of the midpoints and endpoints of intervals of $\mathcal{I}$ except $0,1$ and define the inclusion and action as 
		\begin{equation*}
		\begin{tikzpicture}
		\draw (0,0)--(2.5,0);
		\draw (0,-1.25)--(2.5,-1.25);
		\draw [red] (0,-.1)--(0,.1);
		\draw [red] (2.5*.5,-.1)--(2.5*.5,.1);
		\draw [red] (2.5,-.1)--(2.5,.1);
		\draw [red] (0,-.1-1.25)--(0,.1-1.25);
		\draw [red] (2.5*.5,-.1-1.25)--(2.5*.5,.1-1.25);
		\draw [red] (2.5,-.1-1.25)--(2.5,.1-1.25);
		\draw [red] (2.5*.75,-.1-1.25)--(2.5*.75,.1-1.25);
		\draw (2.5*.25,0)--(2.5*.25,-1.25);
		\draw (2.5*.75,0)--(2.5*.75,-1.25);
		\draw (2.5*.625,-1.25)--(2.5*.625,-.75);
		\draw (2.5*.875,-1.25)--(2.5*.875,-.75);
		\draw (2.5*.875,-.75) arc [radius=2.5*.125, start angle=0, end angle=180];		
		\draw (2.5*.5,0)--(2.5*.5,-1.25);
		\node [left] at (0,0) {$\mathcal{I}$};
		\node [left] at (0,-1.25) {$\mathcal{J}$};
		\node [left] at (-.5,-.6125) {$T^\mathcal{I}_\mathcal{J}=$};
		\node [below] at (2.5*.7,-.5) {\tiny $R$};
		\draw (6+0,0)--(6+2.5,0);
		\draw (6+0,-1.25)--(6+2.5,-1.25);
		\draw [red] (6+0,-.1)--(6+0,.1);
		\draw [red] (6+2.5*.5,-.1)--(6+2.5*.5,.1);
		\draw [red] (6+2.5,-.1)--(6+2.5,.1);
		\draw [red] (6+2.5*.75,-.1)--(6+2.5*.75,.1);
		\draw [red] (6+0,-.1-1.25)--(6+0,.1-1.25);
		\draw [red] (6+2.5*.5,-.1-1.25)--(6+2.5*.5,.1-1.25);
		\draw [red] (6+2.5,-.1-1.25)--(6+2.5,.1-1.25);
		\draw [red] (6+2.5*.25,-.1-1.25)--(6+2.5*.25,.1-1.25);
		\draw (6+2.5*.25,0)--(6+2.5*.125,-1.25);
		\draw (6+2.5*.625,0)--(6+2.5*.375,-1.25);
		\draw (6+2.5*.875,0)--(6+2.5*.75,-1.25);
		\node [left] at (6,0) {$\mathcal{J}$};
		\node [left] at (6,-1.25) {$g(\mathcal{J})$};
		\node [left] at (5.5,-.6125) {$g_\mathcal{J}=$};
		\end{tikzpicture}
		\end{equation*}  
	\end{approach}
	
	In both cases, $T^{\mathcal{I}}_{\mathcal{J}}$ is an isometry from $\mathcal{H}(\mathcal{I})$ to $\mathcal{H}(\mathcal{J})$. Therefore the direct limit of $\mathcal{H}(\mathcal{I})$ for $\mathcal{I}\in\mathcal{D}$ is a prehilbert space. We complete the direct limit to obtain a Hilbert space, denoted by $\mathcal{H}_S$ (or $\mathcal{H}_R$). For each $v\in \mathcal{H}(\mathcal{I})$ and $g\in F$, let $\mathcal{J}\in \mathcal{D}$ such that $\mathcal{I}\lesssim\mathcal{J}$ and $\mathcal{J}$ "good" for $g$. We define $\pi:F\rightarrow \mathcal{B}(\mathcal{H}_S)$ (or $\mathcal{B}(\mathcal{H}_R)$) as
	\begin{equation*}
	\pi(g)v=g_\mathcal{J}T^{\mathcal{I}}_\mathcal{J}(v).
	\end{equation*}
	
	From the theory of subfactors, one can show that $(\pi, \mathcal{H}_R)$ or $(\pi, \mathcal{H}_R)$ is a unitary representation for the Thompson group $F$. Naturally we obtain a subgroup as the stabilizer of the vacuum vector $\xi=\gra{Id}$.
	\begin{definition}
		The subgroup $F_\xi$ is defined as 
		\begin{equation*}
		F_\xi=\{g\in F\vert \pi(g)\xi=\xi\}
		\end{equation*}
	\end{definition}
	With specific choices of $R$ and $S$, we obtain the Jones subgroup $\vec{F}$ in \S \ref{sec:vecF} and the 3-colorable subgroup in \S \ref{sec:3color}.

	\section{Singly generated subgroups}\label{sec:def}
	In this section, we introduce a group motivated by the braid group $B_n,n\in\mathbb{N}$. The braid group $B_n$ is the group formed by appropriate isotopy classes of braids with obvious concatenation operation. A preferred set of generators $\sigma_1,\sigma_2,\cdots,\sigma_{n-1}$ given by the following pictures:
	\begin{equation*}
	\grb{sigmai}
	\end{equation*}
	One can easily to verify the following relations:
	\begin{align}
	\sigma_i\sigma_{i+1}\sigma_i&=\sigma_{i+1}\sigma_i\sigma_{i+1},~i=1,2,\cdots,n-1\label{rel:braid1}\\
	\sigma_i\sigma_j&=\sigma_j\sigma_i,~\vert i-j\vert\geq2\label{rel:braid2}
	\end{align}
	Relation \eqref{rel:braid1} and \eqref{rel:braid2} give a presentation of $B_n$ was proved by E.Artin. It follows that $B_n$ can be embedded into $B_{n+1}$ by restricting the $(n+1)$-th string to be a through string. Therefore, one can consider the braid group by taking the inductive limit of $B_n,n\in\mathbb{N}$. In this case, every element can be interpreted as a diagram in $B_n$ for some $n\in\mathbb{N}$ with infinitely many through strings on its right side. 
	\begin{definition}
		The braid group $B_\infty$ is defined as the following
		\begin{equation*}
		B_\infty\cong\langle \sigma_1,\sigma_2,\cdots\vert \sigma_i\sigma_{i+1}\sigma_i=\sigma_{i+1}\sigma_i\sigma_{i+1},i=1,2,\cdots; \sigma_i\sigma_j=\sigma_j\sigma_i,\vert i-j\vert\geq2\rangle
		\end{equation*}
	\end{definition}
	
	One can think of $B_\infty$ as a group generated by a $(2,2)$-tangle of \gra{braid}. In this section, we consider groups in this form for an arbitrary $(1,N)$-tangle 
	\begin{tikzpicture}
	\draw (0,0)--(1,0);
	\draw (0,0)--(0,.5);
	\draw (0,.5)--(1,.5);
	\draw (1,.5)--(1,0);
	\node at (.5,.25) {$X$};
	\draw (.5,.5)--(.5,.7);
	\draw (.2,0)--(.2,-.4);
	\draw (.4,0)--(.4,-.4);
	\draw (.8,0)--(.8,-.4);
		\draw [fill] (.5,-0.2) circle [radius=0.02];
		\draw [fill] (.6,-0.2) circle [radius=0.02];
		\draw [fill] (.7,-0.2) circle [radius=0.02];
	\end{tikzpicture}
	which arise as subgroups of Thompson group $F$. 
	
	\begin{definition}[Shifts of $X$]
		Let $X_k$, called the $k$-shift of $X$, be an $(k+1,k+N)$-tangle defined as follows:
		\begin{equation*}
		\begin{tikzpicture}
			\draw (0,0)--(1,0);
			\draw (0,0)--(0,.5);
			\draw (0,.5)--(1,.5);
			\draw (1,.5)--(1,0);
			\node at (.5,.25) {$X$};
			\draw (.5,.5)--(.5,.7);
			\draw (.2,0)--(.2,-.4);
			\draw (.4,0)--(.4,-.4);
			\draw (.8,0)--(.8,-.4);
			\draw [fill] (.5,-0.2) circle [radius=0.02];
			\draw [fill] (.6,-0.2) circle [radius=0.02];
			\draw [fill] (.7,-0.2) circle [radius=0.02];
			\draw (-.2,.7)--(-.2,-.4);
			\draw (-.6,.7)--(-.6,-.4);
			\draw [fill] (-.3,0.15) circle [radius=0.02];
			\draw [fill] (-.4,0.15) circle [radius=0.02];
			\draw [fill] (-.5,0.15) circle [radius=0.02];
			\node [above] at (-.4,.15) {$k$};
			\node [left] at (-.6,.15) {$X_k=$};
		\end{tikzpicture}
		\end{equation*}
		where $k$ stands for $k$ through strings on the left.
	\end{definition}
	\begin{remark}
		For each $X_k$, we identify it as the same diagram with infinitely many strings on the right. Therefore, we define multiplication $\cdot$ of $X_m$ and $X_n$ as stacking the tangle from bottom to top as the multiplication in $\mathscr{P}_\bullet$. We denote this set as $Alg(X)$ singly generated by $X$. Let $Alg(X)_n$ to be set of all $(1,n)$-tangles in $Alg(X)$.
	\end{remark}

	\begin{proposition}\label{pro:verticalisotopy}
		For $k,n\in\mathbb{N}$ with $k<n$, we have 
		\begin{equation}
		X_n\cdot X_k=X_k\cdot X_{n+N-1} \label{equ:verticalisotopy}
		\end{equation}
	\end{proposition}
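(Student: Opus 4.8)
The plan is to verify that the two sides of \eqref{equ:verticalisotopy} are literally the same planar tangle once one appends through strings on the right, as in the remark preceding the statement; the identity is then just the assertion that this tangle does not depend on the relative heights of two boxes sitting in disjoint vertical strips, which is exactly the content of the definition of vertical isotopy. Concretely, let $D$ be the tangle obtained by inserting one copy of $X$ on the $(k+1)$-st string counted from the left and a second copy of $X$ on the $(n+1)$-st string, with all other strings drawn vertically; since $k<n$, the two copies of $X$ occupy disjoint vertical strips, so $D$ is unambiguous. This is the exact analogue, for a $(1,N)$-box, of the far-commutation relation $\sigma_i\sigma_j=\sigma_j\sigma_i$ for $|i-j|\ge 2$ in $B_\infty$.

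Next I would carry out the bookkeeping that matches each side of \eqref{equ:verticalisotopy} with $D$. For $X_n\cdot X_k$, stack $X_n$ on the bottom and $X_k$ on the top: $X_n$ keeps its first $k+1\le n$ strings vertical and replaces the $(n+1)$-st by the $N$ output legs of $X$, and then $X_k$ inserts the second copy of $X$ on the $(k+1)$-st of those vertical strings, which is a through string of $X_n$. Because this second insertion lies strictly to the left of the first and turns one string into $N$ strings, it transports the first copy of $X$ from position $n+1$ to position $n+N$; reading the resulting picture with the $X_k$-box as the lower one displays it as $X_k$ followed by a copy of $X$ inserted on a through string in position $n+N$, that is, as $X_k\cdot X_{n+N-1}$. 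Here $k<n$ is used once more, to guarantee that after $X_k$ the first copy of $X$ still lies strictly to the right of the $N$ new strings, so that it is genuinely the shift $X_{n+N-1}$ rather than a more intricate configuration; this is precisely why the index on the right is $n+N-1$ rather than $n$, the increment $N-1$ being the record of the fact that $X$ has one input leg and $N$ output legs.

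With both products identified with $D$, the equality $X_n\cdot X_k = D = X_k\cdot X_{n+N-1}$ in $Alg(X)$ is a single application of vertical isotopy: the two copies of $X$ live in disjoint vertical strips of $D$, so one may be slid past the other, and that slide is exactly the passage between the left-hand and right-hand presentations. The identification is insensitive to how many through strings are appended on the right, again by the remark preceding the statement. I do not expect a genuine obstacle here; the entire content is the vertical isotopy of the definition, and the only thing that requires care is drawing $D$ correctly and keeping the shift indices straight, in particular the reindexing $n\mapsto n+N-1$ forced by the insertion of $X$ to its left.
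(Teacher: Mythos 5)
Your proof is correct and is essentially the paper's own argument: the paper's proof consists precisely of the figure exhibiting both products as the single diagram $D$ with the two boxes occupying disjoint vertical strips, so that the identity is one application of vertical isotopy, with the shift $n\mapsto n+N-1$ recording that the left box turns one string into $N$. One small slip: in the paper's figure the first factor of a product is the \emph{upper} layer (in $X_n\cdot X_k$ the box of $X_n$ sits above that of $X_k$; with the opposite convention the literal stacking could attach the $X_n$-box to an output leg of the $X_k$-box when $n\le k+N-1$), so your phrase ``stack $X_n$ on the bottom and $X_k$ on the top'' has the layers reversed --- though the diagram you actually describe, namely the $X_k$-box inserted on the $(k+1)$-st through string of $X_n$, is the correct $D$, so nothing downstream is affected.
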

	\begin{proof}
		Relation \eqref{equ:verticalisotopy} follows from
		\begin{equation*}
			\begin{tikzpicture}
			\node [below] at (-.2,-.4) {$k$};
			\node [below] at (+1.8,.6) {$n$};
			\node [below] at (-.1+5,.6) {$k$};
			\node [below] at (.2+5+1.8,-.4) {$n+N-1$};
			\draw [fill,red] (-.2,-.4) circle [radius=.1];
			\draw [fill,green] (-.2+1.8,.6) circle [radius=.1];
				\draw [fill,red] (-.2+5,.6) circle [radius=.1];
				\draw [fill,green] (-.2+1.8+5,-.4) circle [radius=.1];
			
			\draw (0,0)--(1,0);
			\draw (0,0)--(0,.5);
			\draw (0,.5)--(1,.5);
			\draw (1,.5)--(1,0);
			\node at (.5,.25) {$X$};
			
			\draw (1.8+0,.75)--(1.8+1,.75);
			\draw (1.8+0,.75)--(1.8+0,1.25);
			\draw (1.8+0,1.25)--(1.8+1,1.25);
			\draw (1.8+1,1.25)--(1.8+1,.75);
			\node at (1.8+.5,1) {$X$};
			
			\draw (-.2,-.4)--(-.2,1.65);
			\draw [fill] (-.3,.25) circle [radius=0.02];
			\draw [fill] (-.4,0.25) circle [radius=0.02];
			\draw [fill] (-.5,0.25) circle [radius=0.02];
			\draw [fill] (-.3,1) circle [radius=0.02];
			\draw [fill] (-.4,1) circle [radius=0.02];
			\draw [fill] (-.5,1) circle [radius=0.02];
			\draw (-.6,-.4)--(-.6,1.65);
			
				\draw (1.8+-.2,-.4)--(1.8+-.2,1.65);
				\draw [fill] (1.8+-.3,.25) circle [radius=0.02];
				\draw [fill] (1.8+-.4,0.25) circle [radius=0.02];
				\draw [fill] (1.8+-.5,0.25) circle [radius=0.02];
				\draw [fill] (1.8+-.3,1) circle [radius=0.02];
				\draw [fill] (1.8+-.4,1) circle [radius=0.02];
				\draw [fill] (1.8+-.5,1) circle [radius=0.02];
				\draw (1.8+-.6,-.4)--(1.8+-.6,1.65);
			
				\draw (3.6+-.2,-.4)--(3.6+-.2,1.65);
				\draw [fill] (3.6+-.3,.25) circle [radius=0.02];
				\draw [fill] (3.6+-.4,0.25) circle [radius=0.02];
				\draw [fill] (3.6+-.5,0.25) circle [radius=0.02];
				\draw [fill] (3.6+-.3,1) circle [radius=0.02];
				\draw [fill] (3.6+-.4,1) circle [radius=0.02];
				\draw [fill] (3.6+-.5,1) circle [radius=0.02];
				\draw (3.6+-.6,-.4)--(3.6+-.6,1.65);
				
					\draw (.2,0)--(.2,-.4);
					\draw (.4,0)--(.4,-.4);
					\draw (.8,0)--(.8,-.4);
						\draw [fill] (.5,-0.2) circle [radius=0.02];
						\draw [fill] (.6,-0.2) circle [radius=0.02];
						\draw [fill] (.7,-0.2) circle [radius=0.02];
						\draw (.5,.5)--(.5,1.65);
						
					\draw (1.8+.2,.75)--(1.8+.2,-.4);
					\draw (1.8+.4,.75)--(1.8+.4,-.4);
					\draw (1.8+.8,.75)--(1.8+.8,-.4);
					\draw [fill] (1.8+.5,-0.2) circle [radius=0.02];
					\draw [fill] (1.8+.6,-0.2) circle [radius=0.02];
					\draw [fill] (1.8+.7,-0.2) circle [radius=0.02];
					\draw (1.8+.5,1.25)--(1.8+.5,1.65);
				
				\draw [dashed, blue, thick] (-.8,-.4)--(3.6,-.4);
				\draw [dashed, blue, thick] (-.8,.6)--(3.6,.6);
				\draw [dashed, blue, thick] (-.8,1.65)--(3.6,1.65);
				
				\node [right] at (3.6,.625) {$=$};
			
			\draw (5+0,0+.75)--(5+1,0+.75);
			\draw (5+0,0+.75)--(5+0,.5+.75);
			\draw (5+0,.5+.75)--(5+1,.5+.75);
			\draw (5+1,.5+.75)--(5+1,0+.75);
			\node at (5+.5,.25+.75) {$X$};
			
			\draw (5+1.8+0,.75-.75)--(5+1.8+1,.75-.75);
			\draw (5+1.8+0,.75-.75)--(5+1.8+0,1.25-.75);
			\draw (5+1.8+0,1.25-.75)--(5+1.8+1,1.25-.75);
			\draw (5+1.8+1,1.25-.75)--(5+1.8+1,.75-.75);
			\node at (5+1.8+.5,1-.75) {$X$};
			
			\draw (5+-.2,-.4)--(5+-.2,1.65);
			\draw [fill] (5+-.3,.25) circle [radius=0.02];
			\draw [fill] (5+-.4,0.25) circle [radius=0.02];
			\draw [fill] (5+-.5,0.25) circle [radius=0.02];
			\draw [fill] (5+-.3,1) circle [radius=0.02];
			\draw [fill] (5+-.4,1) circle [radius=0.02];
			\draw [fill] (5+-.5,1) circle [radius=0.02];
			\draw (5+-.6,-.4)--(5+-.6,1.65);
			
			\draw (5+1.8+-.2,-.4)--(5+1.8+-.2,1.65);
			\draw [fill] (5+1.8+-.3,.25) circle [radius=0.02];
			\draw [fill] (5+1.8+-.4,0.25) circle [radius=0.02];
			\draw [fill] (5+1.8+-.5,0.25) circle [radius=0.02];
			\draw [fill] (5+1.8+-.3,1) circle [radius=0.02];
			\draw [fill] (5+1.8+-.4,1) circle [radius=0.02];
			\draw [fill] (5+1.8+-.5,1) circle [radius=0.02];
			\draw (5+1.8+-.6,-.4)--(5+1.8+-.6,1.65);
			
			\draw (5+3.6+-.2,-.4)--(5+3.6+-.2,1.65);
			\draw [fill] (5+3.6+-.3,.25) circle [radius=0.02];
			\draw [fill] (5+3.6+-.4,0.25) circle [radius=0.02];
			\draw [fill] (5+3.6+-.5,0.25) circle [radius=0.02];
			\draw [fill] (5+3.6+-.3,1) circle [radius=0.02];
			\draw [fill] (5+3.6+-.4,1) circle [radius=0.02];
			\draw [fill] (5+3.6+-.5,1) circle [radius=0.02];
			\draw (5+3.6+-.6,-.4)--(5+3.6+-.6,1.65);
			
			\draw (5+.2,.75)--(5+.2,-.4);
			\draw (5+.4,.75)--(5+.4,-.4);
			\draw (5+.8,.75)--(5+.8,-.4);
			\draw [fill] (5+.5,-0.2) circle [radius=0.02];
			\draw [fill] (5+.6,-0.2) circle [radius=0.02];
			\draw [fill] (5+.7,-0.2) circle [radius=0.02];
			\draw (5+.5,1.25)--(5+.5,1.65);
			
			\draw (5+1.8+.2,0)--(5+1.8+.2,-.4);
			\draw (5+1.8+.4,0)--(5+1.8+.4,-.4);
			\draw (5+1.8+.8,0)--(5+1.8+.8,-.4);
			\draw [fill] (5+1.8+.5,-0.2) circle [radius=0.02];
			\draw [fill] (5+1.8+.6,-0.2) circle [radius=0.02];
			\draw [fill] (5+1.8+.7,-0.2) circle [radius=0.02];
			\draw (5+1.8+.5,.5)--(5+1.8+.5,1.65);
			
			\draw [dashed, blue, thick] (5+-.8,-.4)--(5+3.6,-.4);
			\draw [dashed, blue, thick] (5+-.8,.6)--(5+3.6,.6);
			\draw [dashed, blue, thick] (5+-.8,1.65)--(5+3.6,1.65);
			
			\end{tikzpicture}
		\end{equation*}
	\end{proof}
	
	Motivated by the pair of binary trees representation of Thompson group $F$, we consider the group consisting of the pairs of elements of a certain type from $Alg(X)$. 
	\begin{definition}\label{def:equivalence}
		Let $\widehat{G_X}=\{(T_+,T_-): T_\pm\in Alg(X)_n~\forall n\in\mathbb{N}\}$. We define a relation on $\widehat{G_X}$ by 
		\begin{align*}
		&(T_+,T_-)\sim(S_+,S_-)\Leftrightarrow\\
		\exists R\in Alg(X) &\text{~such that~} T_\pm=S_\pm\cdot R \text{~or~} S_\pm=T_\pm\cdot R.
		\end{align*}
	\end{definition}
	
	\begin{proposition}\label{pro:stablisation}
		Suppose $T\in Alg(X)_n, S\in Alg(X)_m$ for some $n,m\in\mathbb{N}$. There exists $P,Q\in Alg(X)$ such that $T\cdot P=S\cdot Q$. 
	\end{proposition}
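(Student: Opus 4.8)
The plan is to turn the preorder hiding in Definition~\ref{def:equivalence} into a statement about directedness: say $T\preceq T'$ for rooted elements of $Alg(X)$ if $T'=T\cdot R$ for some $R\in Alg(X)$, and exhibit a \emph{cofinal sequence} $\Delta_0\preceq\Delta_1\preceq\cdots$ in $Alg(X)_1\cup Alg(X)_N\cup\cdots$. Once that is done, given $T\in Alg(X)_n$ and $S\in Alg(X)_m$ one finds $\Delta_r$ dominating both, and the two witnessing elements are exactly the $P,Q$ asked for. No positivity or Hilbert space input is needed: this is a statement about the monoid $Alg(X)$ alone, and the only structural fact used is vertical isotopy, i.e. Proposition~\ref{pro:verticalisotopy}.

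First I would record the combinatorial picture. An element of $Alg(X)$ is a word in the shifts $X_k$ composed by stacking, and Proposition~\ref{pro:verticalisotopy} says that all that matters about such a word is which strand each copy of $X$ sits on; thus an element of $Alg(X)_n$ behaves like an $N$-ary forest with one root and $n$ leaves, and $\preceq$ is forest refinement. For $r\ge 0$ I define the \emph{complete depth-$r$ tangle} $\Delta_r\in Alg(X)_{N^r}$ by $\Delta_0=$ the through strand and $\Delta_{r+1}=X\cdot\Delta_r^{(N)}$, where for any $a\ge 1$ and any $W\in Alg(X)$ I write $W^{(a)}$ (and in particular $\Delta_r^{(a)}$) for the $a$-fold side-by-side juxtaposition; each $W^{(a)}$ is again a composition of shifts and hence lies in $Alg(X)$, unambiguously by vertical isotopy. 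Since $\Delta_{r+1}=\Delta_r\cdot\Lambda_r$ where $\Lambda_r$ attaches one copy of $X$ to each of the $N^r$ output strands of $\Delta_r$, we get $\Delta_r\preceq\Delta_{r+1}$, and more generally $\Delta_r^{(a)}\preceq\Delta_{r'}^{(a)}$ whenever $r\le r'$.

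The heart of the matter is the following claim: for every $(a,n)$-tangle $T\in Alg(X)$ there is $R\in Alg(X)$ with $T\cdot R=\Delta_{g(T)}^{(a)}$, where $g(T)$ is the number of copies of $X$ occurring in $T$. I would prove this by induction on $g(T)$. If $g(T)=0$ then $T$ is the identity on $a$ strands, which is $\Delta_0^{(a)}$. If $g(T)\ge 1$ then, peeling a copy of $X$ off the input side, we may write $T=X_j\cdot T'$ with $0\le j\le a-1$ and $T'\in Alg(X)$ an $(a+N-1,n)$-tangle with $g(T)-1$ copies of $X$; the inductive hypothesis gives $R'$ with $T'\cdot R'=\Delta_{g(T)-1}^{(a+N-1)}$, and then $T\cdot R'=X_j\cdot\Delta_{g(T)-1}^{(a+N-1)}$ equals, by inspection, the juxtaposition of $j$ copies of $\Delta_{g(T)-1}$, then one copy of $X\cdot\Delta_{g(T)-1}^{(N)}=\Delta_{g(T)}$ over strand $j+1$, then $a-1-j$ further copies of $\Delta_{g(T)-1}$. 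Since $\Delta_{g(T)-1}\preceq\Delta_{g(T)}$, this tangle is refined by $\Delta_{g(T)}^{(a)}$, say via $R''\in Alg(X)$ (a juxtaposition of copies of $\Lambda_{g(T)-1}$ and identity strands), and $T\cdot(R'R'')=\Delta_{g(T)}^{(a)}$, completing the induction.

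Granting the claim, the Proposition is immediate: apply it with $a=1$ to $T\in Alg(X)_n$ and to $S\in Alg(X)_m$ to get $R_1,R_2\in Alg(X)$ with $T\cdot R_1=\Delta_{g(T)}$ and $S\cdot R_2=\Delta_{g(S)}$, put $r=\max(g(T),g(S))$, and use $\Delta_{g(T)}\preceq\Delta_r$ and $\Delta_{g(S)}\preceq\Delta_r$ to absorb the difference, obtaining $P,Q\in Alg(X)$ with $T\cdot P=\Delta_r=S\cdot Q$. I expect the only genuinely delicate step to be the inductive claim — in particular, making the phrase ``is refined by'' precise in terms of the shifts $X_k$ and checking that $X_j\cdot\Delta_{g(T)-1}^{(a+N-1)}$ really does reassemble as described; this is exactly where Proposition~\ref{pro:verticalisotopy} does the work, since it is what lets one ``grow'' the strands in any convenient order. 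Everything else is bookkeeping.
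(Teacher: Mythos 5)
Your proof is correct and follows essentially the same strategy as the paper: both exhibit a cofinal chain of complete $N$-ary tree diagrams (your $\Delta_r$, the paper's $C_n$) and reduce the proposition to the fact that every element of $Alg(X)$ is dominated by some member of that chain, whose members are mutually comparable. Your inductive argument for the domination step is actually more detailed than the paper's, which simply asserts that $T$ is a subdiagram of some $C_{k_1}$ without proof.
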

	\begin{proof}
		Set $R_k=\prod_{j=0}^{k} X_{jN}$ an $(k+1,(k+1)N)$-tangle.
		We consider the elements $C_n=\prod_{j=0}^{n} R_j\in Alg(X)_{(n+1)N}$ illustrated as follows:
		\begin{equation*}
		\begin{tikzpicture}
		\node [left] at (-2.2,-1.4) {\large $C_n=$};
			\draw (0,0)--(1,0);
			\draw (0,0)--(0,.5);
			\draw (0,.5)--(1,.5);
			\draw (1,.5)--(1,0);
			\node at (.5,.25) {$X$};
			\draw (.5,.5)--(.5,.7);
			\draw (.2,0)--(.2,-.4);
			\draw (.4,0)--(.4,-.4);
			\draw (.8,0)--(.8,-.4);
			\draw [fill] (.5,-0.2) circle [radius=0.02];
			\draw [fill] (.6,-0.2) circle [radius=0.02];
			\draw [fill] (.7,-0.2) circle [radius=0.02];
				\draw (-2+0,0-1.5)--(-2+1,0-1.5);
				\draw (-2+0,0-1.5)--(-2+0,.5-1.5);
				\draw (-2+0,.5-1.5)--(-2+1,.5-1.5);
				\draw (-2+1,.5-1.5)--(-2+1,0-1.5);
				\node at (-2+.5,.25-1.5) {$X$};
				\draw (-2+.5,.5-1.5)--(-2+.5,.7-1.5);
				\draw (-2+.2,0-1.5)--(-2+.2,-.4-1.5);
				\draw (-2+.4,0-1.5)--(-2+.4,-.4-1.5);
				\draw (-2+.8,0-1.5)--(-2+.8,-.4-1.5);
				\draw [fill] (-2+.5,-0.2-1.5) circle [radius=0.02];
				\draw [fill] (-2+.6,-0.2-1.5) circle [radius=0.02];
				\draw [fill] (-2+.7,-0.2-1.5) circle [radius=0.02];
				
					\draw (0,0-1.5)--(1,0-1.5);
					\draw (0,0-1.5)--(0,.5-1.5);
					\draw (0,.5-1.5)--(1,.5-1.5);
					\draw (1,.5-1.5)--(1,0-1.5);
					\node at (.5,.25-1.5) {$X$};
					\draw (.5,.5-1.5)--(.5,.7-1.5);
					\draw (.2,0-1.5)--(.2,-.4-1.5);
					\draw (.4,0-1.5)--(.4,-.4-1.5);
					\draw (.8,0-1.5)--(.8,-.4-1.5);
					\draw [fill] (.5,-0.2-1.5) circle [radius=0.02];
					\draw [fill] (.6,-0.2-1.5) circle [radius=0.02];
					\draw [fill] (.7,-0.2-1.5) circle [radius=0.02];
					
					\draw (2+0,0-1.5)--(2+1,0-1.5);
					\draw (2+0,0-1.5)--(2+0,.5-1.5);
					\draw (2+0,.5-1.5)--(2+1,.5-1.5);
					\draw (2+1,.5-1.5)--(2+1,0-1.5);
					\node at (2+.5,.25-1.5) {$X$};
					\draw (2+.5,.5-1.5)--(2+.5,.7-1.5);
					\draw (2+.2,0-1.5)--(2+.2,-.4-1.5);
					\draw (2+.4,0-1.5)--(2+.4,-.4-1.5);
					\draw (2+.8,0-1.5)--(2+.8,-.4-1.5);
					\draw [fill] (2+.5,-0.2-1.5) circle [radius=0.02];
					\draw [fill] (2+.6,-0.2-1.5) circle [radius=0.02];
					\draw [fill] (2+.7,-0.2-1.5) circle [radius=0.02];
					
					\draw [fill] (1.2,.25-1.5) circle [radius=0.02];
					\draw [fill] (1.3,.25-1.5) circle [radius=0.02];
					\draw [fill] (1.4,.25-1.5) circle [radius=0.02];
					\draw [fill] (1.5,.25-1.5) circle [radius=0.02];
					\draw [fill] (1.6,.25-1.5) circle [radius=0.02];
					\draw [fill] (1.7,.25-1.5) circle [radius=0.02];
					
			
			\draw [dashed, blue, thick] (-2,.7-1.5)--(3,.7-1.5);
			\draw (.2,-.4)--(-2+.5,.7-1.5);
			\draw (.4,-.4)--(.5,.7-1.5);
			\draw (.8,-.4)--(2+.5,.7-1.5);
			\draw [dashed, blue, thick] (-2,-.4-1.5)--(3,-.4-1.5);
		    \draw [dashed, blue, thick] (-2,-.4-2)--(3,-.4-2);
		    \draw [fill] (.5,-.4-1.6) circle [radius=.02];
			\draw [fill] (.5,-.4-1.7) circle [radius=.02];
			\draw [fill] (.5,-.4-1.8) circle [radius=.02];
			\draw [fill] (.5,-.4-1.9) circle [radius=.02];
			
				\draw (-2+0,0-3.1)--(-2+1,0-3.1);
				\draw (-2+0,0-3.1)--(-2+0,.5-3.1);
				\draw (-2+0,.5-3.1)--(-2+1,.5-3.1);
				\draw (-2+1,.5-3.1)--(-2+1,0-3.1);
				\node at (-2+.5,.25-3.1) {$X$};
				\draw (-2+.5,.5-3.1)--(-2+.5,.7-3.1);
				\draw (-2+.2,0-3.1)--(-2+.2,-.4-3.1);
				\draw (-2+.4,0-3.1)--(-2+.4,-.4-3.1);
				\draw (-2+.8,0-3.1)--(-2+.8,-.4-3.1);
				\draw [fill] (-2+.5,-0.2-3.1) circle [radius=0.02];
				\draw [fill] (-2+.6,-0.2-3.1) circle [radius=0.02];
				\draw [fill] (-2+.7,-0.2-3.1) circle [radius=0.02];
				
				\draw (-.7+0,0-3.1)--(-.7+1,0-3.1);
				\draw (-.7+0,0-3.1)--(-.7+0,.5-3.1);
				\draw (-.7+0,.5-3.1)--(-.7+1,.5-3.1);
				\draw (-.7+1,.5-3.1)--(-.7+1,0-3.1);
				\node at (-.7+.5,.25-3.1) {$X$};
				\draw (-.7+.5,.5-3.1)--(-.7+.5,.7-3.1);
				\draw (-.7+.2,0-3.1)--(-.7+.2,-.4-3.1);
				\draw (-.7+.4,0-3.1)--(-.7+.4,-.4-3.1);
				\draw (-.7+.8,0-3.1)--(-.7+.8,-.4-3.1);
				\draw [fill] (-.7+.5,-0.2-3.1) circle [radius=0.02];
				\draw [fill] (-.7+.6,-0.2-3.1) circle [radius=0.02];
				\draw [fill] (-.7+.7,-0.2-3.1) circle [radius=0.02];
				
				\draw (.5+0,0-3.1)--(.5+1,0-3.1);
				\draw (.5+0,0-3.1)--(.5+0,.5-3.1);
				\draw (.5+0,.5-3.1)--(.5+1,.5-3.1);
				\draw (.5+1,.5-3.1)--(.5+1,0-3.1);
				\node at (.5+.5,.25-3.1) {$X$};
				\draw (.5+.5,.5-3.1)--(.5+.5,.7-3.1);
				\draw (.5+.2,0-3.1)--(.5+.2,-.4-3.1);
				\draw (.5+.4,0-3.1)--(.5+.4,-.4-3.1);
				\draw (.5+.8,0-3.1)--(.5+.8,-.4-3.1);
				\draw [fill] (.5+.5,-0.2-3.1) circle [radius=0.02];
				\draw [fill] (.5+.6,-0.2-3.1) circle [radius=0.02];
				\draw [fill] (.5+.7,-0.2-3.1) circle [radius=0.02];

				\draw (2+0,0-3.1)--(2+1,0-3.1);
				\draw (2+0,0-3.1)--(2+0,.5-3.1);
				\draw (2+0,.5-3.1)--(2+1,.5-3.1);
				\draw (2+1,.5-3.1)--(2+1,0-3.1);
				\node at (2+.5,.25-3.1) {$X$};
				\draw (2+.5,.5-3.1)--(2+.5,.7-3.1);
				\draw (2+.2,0-3.1)--(2+.2,-.4-3.1);
				\draw (2+.4,0-3.1)--(2+.4,-.4-3.1);
				\draw (2+.8,0-3.1)--(2+.8,-.4-3.1);
				\draw [fill] (2+.5,-0.2-3.1) circle [radius=0.02];
				\draw [fill] (2+.6,-0.2-3.1) circle [radius=0.02];
				\draw [fill] (2+.7,-0.2-3.1) circle [radius=0.02];

				\draw [fill] (1.6,.25-3.1) circle [radius=0.02];
				\draw [fill] (1.7,.25-3.1) circle [radius=0.02];
				\draw [fill] (1.8,.25-3.1) circle [radius=0.02];
				\draw [fill] (1.9,.25-3.1) circle [radius=0.02];	
		\end{tikzpicture}
		\end{equation*}
		
		Suppose $T\in Alg(X)_n$, there exists $k_1\in\mathbb{N}$ such that $T$ is a sub diagram of $C_{k_1}$, i.e, there exists $P_1\in Alg(X)$ such that $T\cdot P_1=C_{k_1}$. Similarly we obtain such $k_2\in\mathbb{N}$ and $Q_1\in Alg(X)$ for $S\in Alg(X)_m$.
		
		Set $k=\max(k_1,k_2)$. If $k_1=K_2$, then $T\cdot P_1=S\cdot Q_1$. If $k_1\neq k_2$, then WLOG we assume $k_1>k_2$. Set $Q_2=\prod_{j=k_2+1}^{k_1} R_j$, then 
		\begin{align*}
		T\cdot P_1&=C_{k_1}\\
		S\cdot Q_1\cdot Q_2&=C_{k_2}\cdot \prod_{j=k_2+1}^{k_1} R_j=C_{k_1}
		\end{align*}
		Therefore, $P=P_1$ and $Q=Q_1\cdot Q_2$ satisfies the requirement of the proposition.   
	\end{proof}
	\begin{corollary}\label{cor:multi}
		Suppose $(T_+,T_-), (S_+,S_-)\in\widehat{G_X}$, there exists $(\widetilde{T_+},\widetilde{T_-}),(\widetilde{S_+},\widetilde{S_-})\in\widehat{G_X}$ such that 
		\begin{align}
		(\widetilde{T_+},\widetilde{T_-})&\sim(T_+,T_-)\label{equ:TT}\\
		(\widetilde{S_+},\widetilde{S_-})&\sim(S_+,S_-)\label{equ:SS}\\
		\widetilde{T_-}&=\widetilde{S_+}\label{equ:TS}    
		\end{align}
	\end{corollary}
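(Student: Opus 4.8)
The plan is to derive the corollary directly from Proposition \ref{pro:stablisation}, applied to the lower half $T_-$ of the first pair and the upper half $S_+$ of the second. Write $T_\pm\in Alg(X)_n$ and $S_\pm\in Alg(X)_m$ for the relevant $n,m\in\mathbb{N}$.

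First I would invoke Proposition \ref{pro:stablisation} with $T_-\in Alg(X)_n$ and $S_+\in Alg(X)_m$ to produce $P,Q\in Alg(X)$ with $T_-\cdot P=S_+\cdot Q$; denote this common $(1,\ell)$-tangle by $W$, where $\ell$ is its number of output points. Then I would set
\[
\widetilde{T_\pm}=T_\pm\cdot P,\qquad \widetilde{S_\pm}=S_\pm\cdot Q .
\]
Because $T_+$ and $T_-$ have the same number $n$ of output points, the tangle $P$ can equally well be stacked on top of $T_+$, so $\widetilde{T_+}$ and $\widetilde{T_-}$ both lie in $Alg(X)_\ell$; likewise $\widetilde{S_+},\widetilde{S_-}\in Alg(X)_\ell$. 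Hence $(\widetilde{T_+},\widetilde{T_-})$ and $(\widetilde{S_+},\widetilde{S_-})$ are genuine elements of $\widehat{G_X}$.

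It then remains to check the three displayed relations. Relations \eqref{equ:TT} and \eqref{equ:SS} are immediate from Definition \ref{def:equivalence}: since $\widetilde{T_\pm}=T_\pm\cdot P$ and $\widetilde{S_\pm}=S_\pm\cdot Q$, the equivalences hold with $R=P$ and with $R=Q$ respectively. Relation \eqref{equ:TS} is exactly the identity $T_-\cdot P=S_+\cdot Q$ coming out of Proposition \ref{pro:stablisation}, that is, $\widetilde{T_-}=W=\widetilde{S_+}$.

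There is no serious obstacle here, as the essential content has already been extracted in Proposition \ref{pro:stablisation}; the only points deserving a word of care are the bookkeeping ones, namely that $P$ may legitimately be post-composed with $T_+$ as well as with $T_-$ (automatic, since both have $n$ outputs), and that the four tangles $\widetilde{T_\pm},\widetilde{S_\pm}$ share the same number $\ell$ of output points, so the two new pairs satisfy the defining constraint of $\widehat{G_X}$. Once these are noted, the corollary follows.
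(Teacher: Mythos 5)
Your proposal is correct and is essentially identical to the paper's own proof: both apply Proposition \ref{pro:stablisation} to $T_-$ and $S_+$ to get $P,Q$ with $T_-\cdot P=S_+\cdot Q$, then set $\widetilde{T_\pm}=T_\pm\cdot P$ and $\widetilde{S_\pm}=S_\pm\cdot Q$. The extra bookkeeping you include about matching numbers of output points is harmless and only makes explicit what the paper leaves implicit.
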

	\begin{proof}
		By Proposition \ref{pro:stablisation}, there exists $P,Q\in Alg(X)$ such that $T_-\cdot P=S_+\cdot Q$. We define $\widetilde{T_\pm}=T_\pm\cdot P$ and $\widetilde{S_\pm}=S_\pm\cdot Q$. It follows from definitions that $(\widetilde{T_+},\widetilde{T_-}),(\widetilde{S_+},\widetilde{S_-})\in\widehat{G_X}$ satisfy Relations \eqref{equ:TT}, \eqref{equ:SS} and \eqref{equ:TS}.
	\end{proof}
	
	Let $G_X=\{(T_+,T_-):T_\pm\in Alg(X)_n~\forall n\in\mathbb{N}\}/\sim$, where $\sim$ is the equivalence relation in Definition \ref{def:equivalence}. Suppose $g,h\in G_X$, then there exists $(T,S),(S,R)\in\widehat{G_X}$ such that $g=[(T,S)], h=[(S,R)]$ by Corollary \ref{cor:multi}. Thus we define a binary operation $\circ$ as
	\begin{equation*}
	g\circ h=[(T,R)]
	\end{equation*}
	\begin{theorem}
		The binary operation $\circ$ is well defined on $G_X$ and $G_X$ is a group with the binary operation $\circ$.
	\end{theorem}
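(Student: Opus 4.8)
The plan is to show that $(G_X,\circ)$ is a group in exactly the way the pairs-of-trees model realises Thompson's group $F$, with Proposition~\ref{pro:stablisation} taking the place of the directedness of the poset of standard dyadic partitions and Corollary~\ref{cor:multi} that of passing to a common refinement of two partitions. First I would make the equivalence relation precise: I claim $(T_+,T_-)\sim(S_+,S_-)$ holds exactly when there are $P,Q\in Alg(X)$ with $T_\pm\cdot P=S_\pm\cdot Q$, a \emph{common refinement}. Reflexivity (take $P,Q$ identity tangles) and symmetry are immediate, and a single elementary move of Definition~\ref{def:equivalence} is the special case $P=R,\ Q=\mathrm{id}$ or vice versa. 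A chain of elementary moves collapses to one common refinement, and the relation is transitive: if $T_\pm\cdot P=A_\pm\cdot Q$ and $A_\pm\cdot P'=B_\pm\cdot Q'$, then since $Q$ and $P'$ issue from the leaves of $A_\pm$, Proposition~\ref{pro:stablisation} (applied componentwise to pass from trees to all of $Alg(X)$) gives $C,D$ with $Q\cdot C=P'\cdot D$, and then $T_\pm\cdot(P\cdot C)=A_\pm\cdot P'\cdot D=B_\pm\cdot(Q'\cdot D)$. So $\sim$ is an equivalence relation and $G_X$ is well defined as a set.

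The heart of the matter is well-definedness of $\circ$. Given $g,h\in G_X$, Corollary~\ref{cor:multi} yields representatives $g=[(T,S)]$ and $h=[(S,R)]$ over a common middle term, and one sets $g\circ h=[(T,R)]$; independence of all choices reduces to the confluence statement: if $(T,S)\sim(T',S')$ and $(S,R)\sim(S',R')$, then $(T,R)\sim(T',R')$. Unwinding the hypotheses as common refinements $T\cdot P_1=T'\cdot Q_1$, $S\cdot P_1=S'\cdot Q_1$ and $S\cdot P_2=S'\cdot Q_2$, $R\cdot P_2=R'\cdot Q_2$, I would apply Proposition~\ref{pro:stablisation} to $P_1$ and $P_2$ (which share the source given by the leaves of $S$) to obtain $A,B$ with $P_1\cdot A=P_2\cdot B=:W$. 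Then $S'\cdot(Q_1\cdot A)=S\cdot W=S'\cdot(Q_2\cdot B)$; using that stacking is left-cancellative (the part of a composite diagram lying above a prescribed lower piece is determined by the composite; what is really needed is only that $f\cdot g=f\cdot h$ forces $g\cdot k=h\cdot k$ for some $k$), I may set $Z:=Q_1\cdot A=Q_2\cdot B$, and then $T\cdot W=T'\cdot Z$ and $R\cdot W=R'\cdot Z$, i.e. $(T,R)\sim(T',R')$ via $(W,Z)$. This alignment of the two "bridges" over the common middle is the step I expect to be the main obstacle, and it is exactly where the structure of $Alg(X)$ — Proposition~\ref{pro:verticalisotopy} together with the cancellation property of stacking — enters.

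The group axioms then follow routinely. The class $e:=[(T,T)]$ is independent of $T$ (given $T,S$, put both over a common multiple via Proposition~\ref{pro:stablisation}) and is a two-sided identity, since forming a product over a common middle with a repeated entry changes nothing. The inverse of $[(T_+,T_-)]$ is $[(T_-,T_+)]$: these two pairs already share a middle term, so their product is $[(T_+,T_+)]=e$, and symmetrically on the other side. For associativity, given $g,h,k$ one applies Corollary~\ref{cor:multi} twice and Proposition~\ref{pro:stablisation} once more to reconcile the two middle terms, obtaining representatives $g=[(A,B)]$, $h=[(B,C)]$, $k=[(C,D)]$ over a common chain; then $(g\circ h)\circ k=[(A,C)]\circ[(C,D)]=[(A,D)]=[(A,B)]\circ[(B,D)]=g\circ(h\circ k)$. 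Each of these is a short diagram manipulation once $\circ$ is known to be well defined.
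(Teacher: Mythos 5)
Your proof is correct and follows the same overall outline as the paper's --- quotient of pairs, identity $[(T,T)]$, inverse $[(S,T)]$, and reduction of well-definedness to a confluence statement --- but you do substantially more work at the one step that carries real content, and that work is not wasted. The paper's verification of well-definedness treats only the special case in which the two hypotheses $(T,S)\sim(\widetilde{T},\widetilde{S})$ and $(S,R)\sim(\widetilde{S},\widetilde{R})$ are witnessed by one and the same tangle $P$ applied in the same direction; it does not address the general case where the two equivalences are realized by different tangles, by moves in opposite directions, or by chains of elementary moves of Definition~\ref{def:equivalence}. Your stabilization argument --- align $P_1$ and $P_2$ over a common multiple via Proposition~\ref{pro:stablisation}, then cancel the common prefix $S'$ to identify $Q_1\cdot A$ with $Q_2\cdot B$ --- handles the general case and thereby closes what is genuinely a gap in the paper's two-line argument; the same mechanism gives you transitivity of the common-refinement relation, which the paper leaves implicit. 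Two small points should be made fully precise. First, Proposition~\ref{pro:stablisation} as stated applies to $(1,n)$-tangles, so to align $P_1$ and $P_2$ (which are forests) you should either apply it componentwise to the trees of the forest or apply it to $S\cdot P_1$ and $S\cdot P_2$ and then cancel $S$. Second, the left-cancellation property you invoke deserves a sentence of justification: it holds because the tangles in $Alg(X)$ are rooted planar trees built from copies of $X$, and a rooted planar tree embeds as a root-containing prefix of another in at most one way, so in a decomposition $W=S'\cdot U$ the forest $U$ is uniquely determined by $W$ and $S'$. With those two remarks added, your argument is complete and strictly more rigorous than the one in the paper.
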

	\begin{proof}
		Suppose $(T,S)\sim(\widetilde{T},\widetilde{S})$ and $(S,R)\sim(\widetilde{S},\widetilde{R})$, then there exists $P\in Alg(X)$ such that 
		\begin{align*}
		\widetilde{T}=T\cdot P\\
		\widetilde{S}=S\cdot P\\
		\widetilde{R}=R\cdot P
		\end{align*}
		Therefore $(T,R)\sim(\widetilde{T},\widetilde{R})$, i.e, $\circ$ is well defined on $G_X$.
		
		Suppose $T,S\in Alg(X)_n$ for some $n\in\mathbb{N}$. It follows that $[(T,T)]$ is the identity element with respect to $\circ$. For the element $[(T,S)]$, $[(S,T)]$ is the inverse element. By definition, $G_X$ is closed under the binary operation $\circ$. Therefore $G_X$ is a group with $\circ$.
		
	\end{proof}
	\begin{remark}
		We omit the $\circ$ when there is no confusion. Since the algebra $Alg(X)$ is singly generated by $X$, therefore we denote such groups $G_X$ by singly generated groups. Furthermore, $[(T,R)]$ equals to the identity if and only if $T$ is isotopically equivalent of $R$ for $T,R\in Alg(X)_n$ with $n\in\mathbb{N}$.
	\end{remark}
	\begin{notation}
		In the following sections, we denote $(T,R)$ for $T,R\in Alg(X)_n$ for the equivalence class of $[(T,R)]$ for elements in $G_X$.
	\end{notation}

	\section{The Classical presentation}\label{sec:presentation}
	In this section we discuss the structure of $G_X$ and its classical presentation derived from the vertical isotopy.
	\begin{definition}\label{def:basicform}
		Suppose $X$ is an $(1,N)$-tangle. Set $S_n=X_0\cdot X_{N-1}\cdot X_{2(N-1)} \cdots X_{n(N-1)}$. We call these $S_n$'s basic forms and illustrate them as following:
		\begin{equation*}
		\begin{tikzpicture}
		\draw (0,0)--(1,0);
		\draw (0,0)--(0,.5);
		\draw (0,.5)--(1,.5);
		\draw (1,.5)--(1,0);
		\node at (.5,.25) {$X$};
		\draw (.5,.5)--(.5,.7);
		\draw (.2,0)--(.2,-.4);
		\draw (.4,0)--(.4,-.4);
		\draw (.8,0)--(.8,-.4);
		\draw [fill] (.5,-0.2) circle [radius=0.02];
		\draw [fill] (.6,-0.2) circle [radius=0.02];
		\draw [fill] (.7,-0.2) circle [radius=0.02];
		\draw (.3+0,0-1.5)--(.3+1,0-1.5);
		\draw (.3+0,0-1.5)--(.3+0,.5-1.5);
		\draw (.3+0,.5-1.5)--(.3+1,.5-1.5);
		\draw (.3+1,.5-1.5)--(.3+1,0-1.5);
		\node at (.3+.5,.25-1.5) {$X$};
		\draw (.3+.5,.5-1.5)--(.3+.5,.7-1.5);
		\draw (.3+.2,0-1.5)--(.3+.2,-.4-1.5);
		\draw (.3+.4,0-1.5)--(.3+.4,-.4-1.5);
		\draw (.3+.8,0-1.5)--(.3+.8,-.4-1.5);
		\draw [fill] (.3+.5,-0.2-1.5) circle [radius=0.02];
		\draw [fill] (.3+.6,-0.2-1.5) circle [radius=0.02];
		\draw [fill] (.3+.7,-0.2-1.5) circle [radius=0.02];
		\draw (.8,-.4)--(.8,.7-1.5);
		
		\draw (.2,-.4)--(-.6,.7-1.5);
		\draw (.4,-.4)--(-.4,.7-1.5);
		
		\draw (-.4,.7-1.5)--(-.4,-.4-1.5);
		\draw (-.6,.7-1.5)--(-.6,-.4-1.5);
		\draw (-.2+.3,.7-1.5)--(-.2+.3,-.4-1.5);
		\draw [dashed, blue, thick] (-1,.7-1.5)--(1.5,.7-1.5);		
		\draw [dashed, blue, thick] (-1,-.4-1.5)--(1.5,.-.4-1.5);
		\draw [dashed, blue, thick] (-1,-.4-2)--(1.5,.-.4-2);
		\draw [fill] (.25,-.4-1.6) circle [radius=.02];
		\draw [fill] (.25,-.4-1.7) circle [radius=.02];
		\draw [fill] (.25,-.4-1.8) circle [radius=.02];
		\draw [fill] (.25,-.4-1.9) circle [radius=.02];
		
		\draw [fill] (-.7+.5,.25-1.5) circle [radius=0.02];
		\draw [fill] (-.7+.6,.25-1.5) circle [radius=0.02];
		\draw [fill] (-.7+.7,.25-1.5) circle [radius=0.02];
		\draw [fill] (-.7+.4,.25-1.5) circle [radius=0.02];
		
			\draw (.6+0,0-3.1)--(.6+1,0-3.1);
			\draw (.6+0,0-3.1)--(.6+0,.5-3.1);
			\draw (.6+0,.5-3.1)--(.6+1,.5-3.1);
			\draw (.6+1,.5-3.1)--(.6+1,0-3.1);
			\node at (.6+.5,.25-3.1) {$X$};
			\draw (.6+.5,.5-3.1)--(.6+.5,.7-3.1);
			\draw (.6+.2,0-3.1)--(.6+.2,-.4-3.1);
			\draw (.6+.4,0-3.1)--(.6+.4,-.4-3.1);
			\draw (.6+.8,0-3.1)--(.6+.8,-.4-3.1);
			\draw [fill] (.6+.5,-0.2-3.1) circle [radius=0.02];
			\draw [fill] (.6+.6,-0.2-3.1) circle [radius=0.02];
			\draw [fill] (.6+.7,-0.2-3.1) circle [radius=0.02];
			\draw (.6-.2,.7-3.1)--(.6-.2,-.4-3.1);
			\draw (-.4-.2,.7-3.1)--(-.4-.2,-.4-3.1);
			\draw (-.2-.2,.7-3.1)--(-.2-.2,-.4-3.1);
			
			\draw [fill] (-.5+.5,.25-3.1) circle [radius=0.02];
			\draw [fill] (-.5+.6,.25-3.1) circle [radius=0.02];
			\draw [fill] (-.5+.7,.25-3.1) circle [radius=0.02];
			\draw [fill] (-.5+.4,.25-3.1) circle [radius=0.02];
		\end{tikzpicture}
		\end{equation*}    
	\end{definition}
	\begin{proposition}\label{pro:form of T}
		Suppose $T\in Alg(X)_n$, there exists $\alpha(T)\in\mathbb{N}$ and $X_T\in Alg(X)$ which is $(\alpha(T)(N-1)+1,n)$-tangle such that 
		\begin{equation*}
		\begin{tikzpicture}
		\node [left] (-.2,-.3) {$T=$};
		\draw (0,0)--(2,0);
		\draw (2,0)--(2,.6);
		\draw (2,.6)--(0,.6);
		\draw (0,.6)--(0,0);
		\node at (1,.3) {$S_{\alpha(T)}$};
		\draw (1,.6)--(1,.8);
		\draw (.2,0)--(.2,-.4);
		\draw (.4,0)--(.4,-.4);
		\draw (.6,0)--(.6,-.4);	
		\draw (1.4,0)--(1.4,-.4);
		\draw (1.8,0)--(1.8,-1.4);
		\draw (0,-.4)--(1.6,-.4);
		\draw (0,-1)--(1.6,-1);
		\draw (0,-1)--(0,-.4);
		\draw (1.6,-1)--(1.6,-.4);
		\draw (.2,-1)--(.2,-1.4);
		\draw (.4,-1)--(.4,-1.4);
		\draw (.6,-1)--(.6,-1.4);	
		\draw (1.4,-1)--(1.4,-1.4);
		\node at (.8,-.7) {$X_T$};
		\draw [fill] (.8,-.2) circle [radius=.02];
		\draw [fill] (.9,-.2) circle [radius=.02];
		\draw [fill] (1,-.2) circle [radius=.02];
		\draw [fill] (1.1,-.2) circle [radius=.02];
		\draw [fill] (1.2,-.2) circle [radius=.02];	
		\draw [fill] (.8,-.2-1) circle [radius=.02];
		\draw [fill] (.9,-.2-1) circle [radius=.02];
		\draw [fill] (1,-.2-1) circle [radius=.02];
		\draw [fill] (1.1,-.2-1) circle [radius=.02];
		\draw [fill] (1.2,-.2-1) circle [radius=.02];	
		\end{tikzpicture}
		\end{equation*}	
	\end{proposition}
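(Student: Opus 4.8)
The plan is to argue by induction on the number $\ell(T)$ of copies of $X$ occurring in $T$. If $\ell(T)=1$ then, because $T$ has a single input strand, $T$ must be $X_0=X$ itself, and we take $\alpha(T)=0$ and $X_T=\mathrm{id}_N$ (the degenerate case $\ell(T)=0$, $T=\mathrm{id}_1$, is absorbed by the convention $S_{-1}=\mathrm{id}_1$). For the inductive step write $T=X_0\cdot T'$, where $T'\in Alg(X)$ is the part above the bottom copy of $X$; it has $N$ input strands and $\ell(T)-1$ copies of $X$. The key geometric point is that $X$ has a single input strand, so every shift $X_j$ is the simple tangle $\mathrm{id}_j\otimes X\otimes\mathrm{id}$ that modifies exactly one strand, and a composite of such tangles is tree-like. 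Hence by planarity $T'$ splits as a tensor product $T'=T'_1\otimes\cdots\otimes T'_N$, where $T'_i\in Alg(X)$ is the sub-tangle grown from the $i$-th output strand of the bottom copy of $X$, each with a single input and strictly fewer copies of $X$ than $T$.

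Next I would apply the induction hypothesis to the rightmost factor $T'_N$, writing $T'_N=S_{\alpha'}\cdot(X_{T'_N}\otimes\mathrm{id}_1)$ in the asserted form. The basic forms are designed so that $X_0\cdot(\mathrm{id}_{N-1}\otimes S_{\alpha'})=S_{\alpha'+1}$: tensoring $\mathrm{id}_{N-1}$ on the left converts each factor $X_{j(N-1)}$ of $S_{\alpha'}$ into $X_{(j+1)(N-1)}$ (this is just the definition of the shifts), and prepending $X_0$ produces exactly the product defining $S_{\alpha'+1}$. Substituting this into $T=X_0\cdot(T'_1\otimes\cdots\otimes T'_{N-1}\otimes T'_N)$ and regrouping the remaining pieces gives $T=S_{\alpha'+1}\cdot\big((T'_1\otimes\cdots\otimes T'_{N-1}\otimes X_{T'_N})\otimes\mathrm{id}_1\big)$, so one sets $\alpha(T)=\alpha'+1$ and $X_T=T'_1\otimes\cdots\otimes T'_{N-1}\otimes X_{T'_N}$, again a tensor product of elements of $Alg(X)$. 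Counting boundary points of $S_{\alpha(T)}$---each of its copies of $X$ finalizes a block of left strands while a single rightmost strand runs to the bottom---identifies the source and target of $X_T$ as claimed and shows that the unmatched rightmost strand of $S_{\alpha(T)}$ is exactly the through strand in the picture.

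The whole argument is bookkeeping, and that is where the only real difficulty lies. One must check that the decomposition $T'=T'_1\otimes\cdots\otimes T'_N$ is honestly forced and not merely plausible---this is precisely where it matters that $X$ has a single input, so there is no braiding-type entanglement between the $T'_i$---and one must keep the distinguished rightmost strand of the growing basic form aligned through the recursion so that the $\otimes\,\mathrm{id}_1$ tail persists. The vertical isotopy relations of Proposition \ref{pro:verticalisotopy} enter implicitly, since they are what legitimize the reindexing $\mathrm{id}_{N-1}\otimes X_j=X_{j+N-1}$ inside a larger diagram and the passage from a word of shifts to its strand-by-strand tensor form; once these moves are available the normal form falls out, with $\alpha(T)$ recording how far down the basic-form spine must run before $X_T$ takes over.
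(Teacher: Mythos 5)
Your argument is correct and takes essentially the same route as the paper: both isolate the rightmost spine of the tree-like diagram $T$ as the basic form $S_{\alpha(T)}$ via vertical isotopy, with $X_T$ the remainder that never touches the rightmost strand. The paper phrases this as an iterative procedure (repeatedly pulling up the copy of $X$ attached to the current rightmost strand to obtain $X_0\cdot X_{N-1}\cdot X_{2(N-1)}\cdots$), while you package the identical decomposition as an induction on the number of generators with an explicit tensor splitting of the forest above the bottom $X_0$ and the identity $X_0\cdot(\mathrm{id}_{N-1}\otimes S_{\alpha'})=S_{\alpha'+1}$; the content is the same, and your version merely makes the bookkeeping more explicit.
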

	\begin{proof}
		Since $T\in Alg(X)_n$, the first word in $T$ is $X_0$. If there exists an $X$ such that it is attached the rightmost string on the bottom of $X_0$, then apply the vertical isotopy to obtain a diagram starting with $X_0\cdot X_{N-1}$. Repeating this procedure and let $\alpha(T)$ be the number of the steps. Therefore $T=S_{\alpha(T)}\cdot X_T$, where $X_T$ the rest of the word. 
	\end{proof}
	\begin{lemma}\label{lem:positive}
		Let $P_X=\{(T,S_n):T\in Alg(X)_n,~\forall n\in\mathbb{N}\}$. Then $P_X$ is a semigroup under $\circ$. Furthermore, it generates the group $G_X$. 
	\end{lemma}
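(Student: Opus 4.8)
The plan is to establish the two assertions separately: that $P_X$ generates $G_X$ is essentially formal, while the substance lies in the closure of $P_X$ under $\circ$, which I would reduce to a single combinatorial fact about how the basic forms $S_n$ sit as initial segments inside $Alg(X)$. Writing $|Y|$ for the number of bottom strands of a $(1,\cdot)$-tangle $Y$, note first that every nonempty $Alg(X)_n$ has $n\equiv 1\pmod{N-1}$: an element of $Alg(X)_n$ is a product of shifts read off its single input strand, so its first letter is $X_0$ (output count $N\equiv 1$), and composing with any further shift changes the output count by exactly $N-1$. Hence for each such $n\ge N$ there is a basic form $S_k\in Alg(X)_n$, namely with $(k+1)(N-1)+1=n$. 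Given $[(A,B)]\in G_X$ with $A,B\in Alg(X)_n$, take this $S_k$; the pairs $(A,S_k)$ and $(S_k,B)$ lie in $\widehat{G_X}$ and already share their middle term, so $[(A,S_k)]\circ[(S_k,B)]=[(A,B)]$ directly by the definition of $\circ$. Since $[(S_k,B)]=[(B,S_k)]^{-1}$ and both $[(A,S_k)],[(B,S_k)]\in P_X$, this exhibits $[(A,B)]$ as a product of an element of $P_X$ and an inverse of one; the case $n=1$ forces $A=B$ and gives the identity. Thus $\langle P_X\rangle=G_X$.

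For closure, write $\vec{\tau}_\ell$ for a composite of $\ell$ successive ``rightmost-attach'' moves appended below a tangle. By Definition \ref{def:basicform} the letters of $S_j$ are exactly such moves, and since $(j+1)(N-1)=|S_j|-1$ one has $S_j\cdot\vec{\tau}_\ell=S_{j+\ell}$; consequently $(T',S_{m'})\sim(T'\cdot\vec{\tau}_\ell,\,S_{m'+\ell})$ for every $\ell\ge 0$. The whole matter then reduces to the claim $(\ast)$: \emph{for every $T'\in Alg(X)_n$ and every basic form $S_m$ there are $\ell\ge 0$ and $P\in Alg(X)$ with $T'\cdot\vec{\tau}_\ell=S_m\cdot P$}, i.e. some rightmost-extension of $T'$ has $S_m$ as an initial segment. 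Granting $(\ast)$: for $[(T,S_m)],[(T',S_{m'})]\in P_X$ choose $\ell,P$ as there; then $[(T',S_{m'})]=[(T'\cdot\vec{\tau}_\ell,\,S_{m'+\ell})]$ while $[(T,S_m)]=[(T\cdot P,\,S_m\cdot P)]=[(T\cdot P,\,T'\cdot\vec{\tau}_\ell)]$, so these representatives share the middle term $T'\cdot\vec{\tau}_\ell$ and their composite is $[(T\cdot P,\,S_{m'+\ell})]\in P_X$. Associativity of $\circ$ being inherited from $G_X$, this shows $P_X$ is a semigroup.

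It remains to prove $(\ast)$, which is the heart of the argument; I would argue by induction on $m$. For $m=0$, $S_0=X_0$ is already the first letter of $T'$, so $\ell=0$ works. For the step, assume $T'\cdot\vec{\tau}_{\ell_0}=S_m\cdot P_0$ and append one more rightmost-attach to get $T'\cdot\vec{\tau}_{\ell_0+1}=S_m\cdot P_0\cdot X_q$, where $X_q$, sitting at the very bottom, is a rightmost attachment at its level. So the word $P_0\cdot X_q$ contains at least one letter that is a rightmost attachment at its own level; let $W$ be the topmost such one. Every letter above $W$ attaches $X$ strictly to the left of the rightmost strand at its height, so using the vertical isotopy relation $X_nX_k=X_kX_{n+N-1}$ ($k<n$) of Proposition \ref{pro:verticalisotopy} one commutes $W$ upward past it, each commutation lowering $W$'s index by $N-1$ and keeping it the rightmost attachment at the new level. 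After $W$ has passed all of these letters it abuts $S_m$ as the move $X_{|S_m|-1}$, precisely the move with $S_m\cdot X_{|S_m|-1}=S_{m+1}$; hence $T'\cdot\vec{\tau}_{\ell_0+1}$, so rewritten, begins with $S_{m+1}$, completing the induction.

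The step I expect to be most delicate — the main obstacle — is exactly the bookkeeping in the proof of $(\ast)$: checking that commuting the topmost rightmost-attach upward is always permitted and that its index evolves precisely as stated, so that it lands on $S_m$ as $X_{|S_m|-1}$ rather than overshooting or stalling at an intermediate level. This amounts to tracking carefully how the positions of the remaining strands are relabelled under each application of vertical isotopy; once that is pinned down, everything else is formal. (When $X$ is realized concretely as an $N$-ary caret the claim $(\ast)$ becomes transparent: the coarsest common refinement of any dyadic subdivision of $[0,1]$ with the ``rightmost vine'' subdivision is obtained from the former simply by subdividing its rightmost cell the requisite number of times.)
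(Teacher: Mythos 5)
Your proof is correct and takes essentially the same route as the paper: the generation step is exactly the paper's identity $(T,R)=(T,S_n)\circ(R,S_n)^{-1}$, and your claim $(\ast)$ is the content of Proposition \ref{pro:form of T} (the decomposition $T=S_{\alpha(T)}\cdot X_T$) combined with the fact that appended rightmost attachments commute upward to enlarge the basic-form prefix, which is precisely what the paper's index computation $S_{\alpha(T)}\cdot X_T\mapsto S_{\alpha(T)+\max(n,\alpha(R))-n}\cdot X_T$ uses without comment. Your induction for $(\ast)$ merely makes explicit, via Proposition \ref{pro:verticalisotopy}, the commutation bookkeeping that the paper leaves implicit.
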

	\begin{proof}
		Suppose $g,h\in P_X$ and $g=(T,S_n), h=(R,S_m)$. 
		\begin{align*}
		gh=&(S_{\alpha(T)}\cdot X_T,S_n)\circ (S_{\alpha(R)}\cdot X_R, S_m)\\
		=&(S_{\alpha(T)+\max(n,\alpha(R))-n}\cdot X_T, S_{\max(n,\alpha(R))})\circ\\
		&(S_{\max(n,\alpha(R))}\cdot X_R, S_{m+\max(n,\alpha(R))-\alpha(R)})\\
		=&(S_{\alpha(T)+\max(n,\alpha(R))-n}\cdot X_T\cdot X_R, S_{m+\max(n,\alpha(R))-\alpha(R)})\in P_X
		\end{align*}
		Therefore $P_X$ is a semigroup under the binary operation $\circ$. 
		
		Note that for $(T,R)\in G_X$, $(T,R)=(T,S_n)(R,S_n)^{-1}$ for some $n\in\mathbb{N}$. Hence $P_X$ is a generating set for $P_X$.     
	\end{proof}
	
	Now we give a description of classical generators for the group $G_X$. 
	\begin{definition}\label{def:xk}
		Suppose $n\in\mathbb{N}$, there exists $a(n),b(n)\in\mathbb{N}$ such that $a(n)$ is the largest integer satisfying $(N-1)a(n)+b(n)=n$ with $0\leq b(n)<N$. We define 
		\begin{equation*}
		x_n=(S_{a(n)}\cdot X_n,S_{a(n)+1})
		\end{equation*}
	\end{definition}
	\begin{lemma}\label{lem:Relation} The set $\{x_n:n\in\mathbb{N}\}$ is a generating set for $G_X$ and satisfies the relation 
		\begin{equation}\label{equ:Relation}
		x_k^{-1}x_nx_k=x_{n+N-1}
		\end{equation}
	\end{lemma}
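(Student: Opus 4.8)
The plan is to prove the two assertions of Lemma~\ref{lem:Relation} separately, with the relation \eqref{equ:Relation} coming essentially for free from the vertical isotopy (Proposition~\ref{pro:verticalisotopy}) and the generating claim requiring a bit more bookkeeping via the normal form of Proposition~\ref{pro:form of T}. First I would establish the relation. Unwinding Definition~\ref{def:xk}, $x_n = (S_{a(n)}\cdot X_n, S_{a(n)+1})$, and to compute $x_k^{-1} x_n x_k$ one uses Corollary~\ref{cor:multi} to stabilise all three pairs so their middle entries agree at a common $S_m$. The point is that in $Alg(X)$ the element $X_n \cdot X_k$ equals $X_k \cdot X_{n+N-1}$ for $k<n$ by Proposition~\ref{pro:verticalisotopy}; feeding this into the composite $(S_{a(k)}\cdot X_k, S_{a(k)+1})^{-1}(S_{a(n)}\cdot X_n, S_{a(n)+1})(S_{a(k)}\cdot X_k, S_{a(k)+1})$, after suitable stabilisation the middle $X_k$'s cancel against the outer $X_k$'s up to a shift, leaving precisely the word $X_{n+N-1}$ attached to the appropriate basic form $S_{a(n+N-1)}$, i.e.\ $x_{n+N-1}$. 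The index arithmetic $a(n+N-1) = a(n)+1$, $b(n+N-1)=b(n)$ is exactly the statement that adding $N-1$ to $n$ adds one copy of a generator $X$ to the base, which matches how $S_m$ grows, so the basic forms line up. I would present this as a diagrammatic cancellation (stacking tangles and applying vertical isotopy), since that is the cleanest way to see the shift.

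Next, for the generating claim: by Lemma~\ref{lem:positive} it suffices to show every element of the positive semigroup $P_X$ is a product of the $x_n$'s, since $P_X$ generates $G_X$. An element of $P_X$ has the form $(T, S_n)$ with $T\in Alg(X)_n$. By Proposition~\ref{pro:form of T}, $T = S_{\alpha(T)}\cdot X_T$ where $X_T$ is itself a word in shifted copies of $X$, so it suffices to show that attaching a single shifted generator $X_k$ (in the appropriate position) to a basic form, and then recording the result as a pair against the next basic form, is realised by one of the $x_n$. Each individual factor $X_k$ appearing in $X_T$ contributes, after the normal-form rearrangement of Proposition~\ref{pro:form of T}, a pair of the shape $(S_a \cdot X_k, S_{a+1})$ composed appropriately, which is exactly $x_k$ by definition (with $a = a(k)$ forced by the index constraint $0\le b(k)<N$). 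Telescoping over all the factors of $X_T$, and absorbing the base $S_{\alpha(T)}$ as a product of the $x_n$'s with indices $0, N-1, 2(N-1), \dots$, writes $(T,S_n)$ as a word in the $x_n$; inverses handle the $(S_n, T)$ direction. Thus $\{x_n\}$ generates.

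The main obstacle I expect is the second part — the index arithmetic in matching an arbitrary $X_T$, factor by factor, to a product of the $x_n$'s, because the shifts in $Alg(X)$ interact with the basic forms $S_m$ in a way that must be tracked carefully (each time one moves a generator past another one has to apply Proposition~\ref{pro:verticalisotopy}, which shifts indices by $N-1$, and one must check the running "base height" $\alpha$ stays consistent with $a(k)+1$ at every step). Concretely, the subtle point is that $x_k$ is \emph{defined} with the minimal base $S_{a(k)}$, so when $X_k$ occurs inside a word already sitting on a taller base $S_m$ with $m > a(k)$, one must first use stabilisation (Corollary~\ref{cor:multi}) to see that $(S_m\cdot X_k', S_{m+1})$ — with $X_k'$ the appropriately re-shifted copy — still equals $x_{k'}$ for the correct $k'$; verifying $k' = k + (m - a(k))(N-1)$ is exactly relation~\eqref{equ:Relation} again, applied $m-a(k)$ times. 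So the two parts of the lemma are genuinely intertwined, and I would prove the relation first precisely so that it is available for cleaning up the generating argument. The rest — associativity bookkeeping, checking $T$ isotopic to $R$ iff the pair is trivial — is routine and already recorded in the preceding remarks.
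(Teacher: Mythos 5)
Your proposal is correct and follows essentially the same route as the paper: the relation comes from the vertical isotopy $X_n\cdot X_k=X_k\cdot X_{n+N-1}$ of Proposition~\ref{pro:verticalisotopy} applied to suitably stabilised representatives, and generation is reduced via Lemma~\ref{lem:positive} and the normal form of Proposition~\ref{pro:form of T} to peeling off one shifted generator at a time and inducting on word length. The only cosmetic difference is that the paper identifies the peeled factor with $x_k$ directly through the equivalence relation of Definition~\ref{def:equivalence} (appending a suitable $R$ and absorbing it by isotopy, so the index $k$ is unchanged) rather than by repeated use of relation~\eqref{equ:Relation}, which keeps the two halves of the lemma independent.
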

	\begin{proof}
		By Lemma \ref{lem:positive}, we only need to show every element $g\in P_X$ belongs to the subgroup generated by $\{x_n:n\in\mathbb{N}\}$. Suppose $g=(T,S_n)\in P_X$. By Proposition \ref{pro:form of T}, $X_T=X_{T^\prime}\cdot X_k$ for some $k\in\mathbb{N}$.
		\begin{equation*}
		\begin{tikzpicture}
		\draw (0,0)--(1.2,0);
		\draw (1.2,0)--(1.2,.6);
		\draw (1.2,.6)--(0,.6);
		\draw (0,.6)--(0,0);
		\draw (.6,.6)--(.6,.8);
		\node at (.6,.3) {$S_{\alpha(T)}$};
		\draw (.1,0)--(.1,-.4);
		\draw (.3,0)--(.3,-.4);
		\draw (.5,0)--(.5,-.4);
		\draw (.9,0)--(.9,-.4);
		\draw [fill] (.6,-.2) circle[radius=.01];
		\draw [fill] (.7,-.2) circle[radius=.01];
		\draw [fill] (.8,-.2) circle[radius=.01];
		\draw (1.1,0)--(1.1,-1.75);
		\draw (0,-.4)--(1.0,-.4);
		\draw (1.0,-.4)--(1.0,-.8);
		\draw (1,-.8)--(0,-.8);
		\draw (0,-.8)--(0,-.4);
		\node at (.5,-.6) {$X_{T^\prime}$};
		\draw (.5,-.8)--(.5,-1.2);
		\draw (.25,-1.2)--(.75,-1.2);
		\draw (.25,-1.2)--(.25,-1.45);
		\draw (.75,-1.2)--(.75,-1.45);
		\draw (.75,-1.45)--(.25,-1.45);
		\node at (.5,-1.325) {\small $X$};
		\draw (.35,-1.45)--(.35,-1.75);
		\draw (.45,-1.45)--(.45,-1.75);
		\draw (.65,-1.45)--(.65,-1.75);
		\draw [fill] (.5,-1.6) circle [radius=.01];
		\draw [fill] (.55,-1.6) circle [radius=.01];
		\draw [fill] (.6,-1.6) circle [radius=.01];
		\draw (.9,-.8)--(.9,-1.75);
		\draw [fill] (.6,-1) circle[radius=.01];
		\draw [fill] (.7,-1) circle[radius=.01];
		\draw [fill] (.8,-1) circle[radius=.01];
		\draw (.1,-.8)--(.1,-1);
		\draw (.1,-1) arc [radius=.1,start angle=0, end angle=-90];
		\draw (0,-1.1) arc [radius=.1, start angle=90, end angle=180];
		\draw (-.1,-1.2)--(-.1,-1.75);
			\draw (.2+.1,-.8)--(.2+.1,-1);
			\draw (.2+.1,-1) arc [radius=.1,start angle=0, end angle=-90];
			\draw (.2+0,-1.1) arc [radius=.1, start angle=90, end angle=180];
			\draw (.2+-.1,-1.2)--(.2+-.1,-1.75);
			\draw [fill] (-.05,-1.6) circle [radius=.01];
			\draw [fill] (0,-1.6) circle [radius=.01];
			\draw [fill] (.05,-1.6) circle [radius=.01];
			\node [above] at (0,-1.6) {\small $k$};
		
			\node [left] at(-.1,-.6) {$g=($};
		    \node at (1.4,-0.6) {,};
		    \node [right] at (1.4,-.6) {$S_n)=$};
		    
		    \node [left] at(3+-.1,-.6) {$($};
			\draw (3+0,0)--(3+1.2,0);
			\draw (3+1.2,0)--(3+1.2,.6);
			\draw (3+1.2,.6)--(3+0,.6);
			\draw (3+0,.6)--(3+0,0);
			\draw (3+.6,.6)--(3+.6,.8);
			\node at (3+.6,.3) {$S_{\alpha(T)}$};
			\draw (3+.1,0)--(3+.1,-.4);
			\draw (3+.3,0)--(3+.3,-.4);
			\draw (3+.5,0)--(3+.5,-.4);
			\draw (3+.9,0)--(3+.9,-.4);
			\draw [fill] (3+.6,-.2) circle[radius=.01];
			\draw [fill] (3+.7,-.2) circle[radius=.01];
			\draw [fill] (3+.8,-.2) circle[radius=.01];
			\draw (3+1.1,0)--(3+1.1,-1.75);
			\draw (3+0,-.4)--(3+1.0,-.4);
			\draw (3+1.0,-.4)--(3+1.0,-.8);
			\draw (3+1,-.8)--(3+0,-.8);
			\draw (3+0,-.8)--(3+0,-.4);
			\node at (3+.5,-.6) {$X_{T^\prime}$};
			\draw (3+.5,-.8)--(3+.5,-1.2);
			\draw (3+.25,-1.2)--(3+.75,-1.2);
			\draw (3+.25,-1.2)--(3+.25,-1.45);
			\draw (3+.75,-1.2)--(3+.75,-1.45);
			\draw (3+.75,-1.45)--(3+.25,-1.45);
			\node at (3+.5,-1.325) {\small $X$};
			\draw (3+.35,-1.45)--(3+.35,-1.75);
			\draw (3+.45,-1.45)--(3+.45,-1.75);
			\draw (3+.65,-1.45)--(3+.65,-1.75);
			\draw [fill] (3+.5,-1.6) circle [radius=.01];
			\draw [fill] (3+.55,-1.6) circle [radius=.01];
			\draw [fill] (3+.6,-1.6) circle [radius=.01];
			\draw (3+.9,-.8)--(3+.9,-1.75);
			\draw [fill] (3+.6,-1) circle[radius=.01];
			\draw [fill] (3+.7,-1) circle[radius=.01];
			\draw [fill] (3+.8,-1) circle[radius=.01];
			\draw (3+.1,-.8)--(3+.1,-1);
			\draw (3+.1,-1) arc [radius=.1,start angle=0, end angle=-90];
			\draw (3+0,-1.1) arc [radius=.1, start angle=90, end angle=180];
			\draw (3+-.1,-1.2)--(3+-.1,-1.75);
			\draw (3+.2+.1,-.8)--(3+.2+.1,-1);
			\draw (3+.2+.1,-1) arc [radius=.1,start angle=0, end angle=-90];
			\draw (3+.2+0,-1.1) arc [radius=.1, start angle=90, end angle=180];
			\draw (3+.2+-.1,-1.2)--(3+.2+-.1,-1.75);
			\draw [fill] (3+-.05,-1.6) circle [radius=.01];
			\draw [fill] (3+0,-1.6) circle [radius=.01];
			\draw [fill] (3+.05,-1.6) circle [radius=.01];
			\node [above] at (3+0,-1.6) {\small $k$};
		
		\node at (1.4+3,-0.6) {,};
		\draw (5+0,0)--(5+1.2,0);
		\draw (5+1.2,0)--(5+1.2,.6);
		\draw (5+1.2,.6)--(5+0,.6);
		\draw (5+0,.6)--(5+0,0);
		\draw (5+.6,.6)--(5+.6,.8);
		\node at (5+.6,.3) {$S_{n-1}$};
		\draw (5+.1,0)--(5+.1,-.4);
		\draw (5+.3,0)--(5+.3,-.4);
		\draw (5+.5,0)--(5+.5,-1.2);
		\draw (5+.9,0)--(5+.9,-1.75);
		\draw (5+1.1,0)--(5+1.1,-1.75);
		\draw (5+.25,-1.2)--(5+.75,-1.2);
		\draw (5+.25,-1.2)--(5+.25,-1.45);
		\draw (5+.75,-1.2)--(5+.75,-1.45);
		\draw (5+.75,-1.45)--(5+.25,-1.45);
		\node at (5+.5,-1.325) {\small $X$};
		\draw (5+.35,-1.45)--(5+.35,-1.75);
		\draw (5+.45,-1.45)--(5+.45,-1.75);
		\draw (5+.65,-1.45)--(5+.65,-1.75);
			\draw [fill] (5.5,-1.6) circle [radius=.01];
			\draw [fill] (5.55,-1.6) circle [radius=.01];
			\draw [fill] (5.6,-1.6) circle [radius=.01];
		\draw (5+.1,-.4) arc [radius=.1, start angle=0, end angle=-90];
		\draw (5,-.5) arc [radius=.1, start angle=90, end angle=180];
		\draw (5+.3,-.4) arc [radius=.1, start angle=0, end angle=-90];
		\draw (5+.2,-.5) arc [radius=.1, start angle=90, end angle=180];
		\draw (5-.1,-.6)--(5-.1,-1.75);
		\draw (5+.1,-.6)--(5+.1,-1.75);
			\draw [fill] (5+-.05,-1.6) circle [radius=.01];
			\draw [fill] (5+0,-1.6) circle [radius=.01];
			\draw [fill] (5+.05,-1.6) circle [radius=.01];
			\node [above] at (5+0,-1.6) {\small $k$};

		\draw [fill] (5+.6,-.2) circle[radius=.01];
		\draw [fill] (5+.7,-.2) circle[radius=.01];
		\draw [fill] (5+.8,-.2) circle[radius=.01];
				\node [right] at (5+1.2,-.6) {$)\circ($};
				
			\draw (7.3+0,0)--(7.3+1.2,0);
			\draw (7.3+1.2,0)--(7.3+1.2,.6);
			\draw (7.3+1.2,.6)--(7.3+0,.6);
			\draw (7.3+0,.6)--(7.3+0,0);
			\draw (7.3+.6,.6)--(7.3+.6,.8);
			\node at (7.3+.6,.3) {$S_{n-1}$};
			\draw (7.3+.1,0)--(7.3+.1,-.4);
			\draw (7.3+.3,0)--(7.3+.3,-.4);
			\draw (7.3+.5,0)--(7.3+.5,-1.2);
			\draw (7.3+.9,0)--(7.3+.9,-1.75);
			\draw (7.3+1.1,0)--(7.3+1.1,-1.75);
			\draw (7.3+.25,-1.2)--(7.3+.75,-1.2);
			\draw (7.3+.25,-1.2)--(7.3+.25,-1.45);
			\draw (7.3+.75,-1.2)--(7.3+.75,-1.45);
			\draw (7.3+.75,-1.45)--(7.3+.25,-1.45);
			\node at (7.3+.5,-1.325) {\small $X$};
			\draw (7.3+.35,-1.45)--(7.3+.35,-1.75);
			\draw (7.3+.45,-1.45)--(7.3+.45,-1.75);
			\draw (7.3+.65,-1.45)--(7.3+.65,-1.75);
			\draw [fill] (7.3+.5,-1.6) circle [radius=.01];
			\draw [fill] (7.3+.55,-1.6) circle [radius=.01];
			\draw [fill] (7.3+.6,-1.6) circle [radius=.01];
			\draw (7.3+.1,-.4) arc [radius=.1, start angle=0, end angle=-90];
			\draw (7.3,-.5) arc [radius=.1, start angle=90, end angle=180];
			\draw (7.3+.3,-.4) arc [radius=.1, start angle=0, end angle=-90];
			\draw (7.3+.2,-.5) arc [radius=.1, start angle=90, end angle=180];
			\draw (7.3-.1,-.6)--(7.3-.1,-1.75);
			\draw (7.3+.1,-.6)--(7.3+.1,-1.75);
			\draw [fill] (7.3+-.05,-1.6) circle [radius=.01];
			\draw [fill] (7.3+0,-1.6) circle [radius=.01];
			\draw [fill] (7.3+.05,-1.6) circle [radius=.01];
			\node [above] at (7.3+0,-1.6) {\small $k$};
			
			\draw [fill] (7.3+.6,-.2) circle[radius=.01];
			\draw [fill] (7.3+.7,-.2) circle[radius=.01];
			\draw [fill] (7.3+.8,-.2) circle[radius=.01];
			
			\node [right] at (7.5+1.2,-.6) {$,S_n)$};
		\end{tikzpicture}
		\end{equation*}
		\begin{equation*}
		\begin{tikzpicture}
		\node [left] at (3,-.6) {$=($};
		\draw (3+0,0)--(3+1.2,0);
		\draw (3+1.2,0)--(3+1.2,.6);
		\draw (3+1.2,.6)--(3+0,.6);
		\draw (3+0,.6)--(3+0,0);
		\draw (3+.6,.6)--(3+.6,.8);
		\node at (3+.6,.3) {$S_{\alpha(T)}$};
		\draw (3+.1,0)--(3+.1,-.4);
		\draw (3+.3,0)--(3+.3,-.4);
		\draw (3+.5,0)--(3+.5,-.4);
		\draw (3+.9,0)--(3+.9,-.4);
		\draw [fill] (3+.6,-.2) circle[radius=.01];
		\draw [fill] (3+.7,-.2) circle[radius=.01];
		\draw [fill] (3+.8,-.2) circle[radius=.01];
		\draw (3+1.1,0)--(3+1.1,-1.75);
		\draw (3+0,-.4)--(3+1.0,-.4);
		\draw (3+1.0,-.4)--(3+1.0,-.8);
		\draw (3+1,-.8)--(3+0,-.8);
		\draw (3+0,-.8)--(3+0,-.4);
		\node at (3+.5,-.6) {$X_{T^\prime}$};
		\draw (3+.5,-.8)--(3+.5,-1.75);
	
		\draw (3+.9,-.8)--(3+.9,-1.75);
		\draw [fill] (3+.6,-1) circle[radius=.01];
		\draw [fill] (3+.7,-1) circle[radius=.01];
		\draw [fill] (3+.8,-1) circle[radius=.01];
		\draw (3+.1,-.8)--(3+.1,-1);
		\draw (3+.1,-1) arc [radius=.1,start angle=0, end angle=-90];
		\draw (3+0,-1.1) arc [radius=.1, start angle=90, end angle=180];
		\draw (3+-.1,-1.2)--(3+-.1,-1.75);
		\draw (3+.2+.1,-.8)--(3+.2+.1,-1);
		\draw (3+.2+.1,-1) arc [radius=.1,start angle=0, end angle=-90];
		\draw (3+.2+0,-1.1) arc [radius=.1, start angle=90, end angle=180];
		\draw (3+.2+-.1,-1.2)--(3+.2+-.1,-1.75);
		\draw [fill] (3+-.05,-1.6) circle [radius=.01];
		\draw [fill] (3+0,-1.6) circle [radius=.01];
		\draw [fill] (3+.05,-1.6) circle [radius=.01];
		\node [above] at (3+0,-1.6) {\small $k$};
		
		\node at (1.4+3,-0.6) {,};
		\draw (5+0,0)--(5+1.2,0);
		\draw (5+1.2,0)--(5+1.2,.6);
		\draw (5+1.2,.6)--(5+0,.6);
		\draw (5+0,.6)--(5+0,0);
		\draw (5+.6,.6)--(5+.6,.8);
		\node at (5+.6,.3) {$S_{n-1}$};
		\draw (5+.1,0)--(5+.1,-.4);
		\draw (5+.3,0)--(5+.3,-.4);
		\draw (5+.5,0)--(5+.5,-1.75);
		\draw (5+.9,0)--(5+.9,-1.75);
		\draw (5+1.1,0)--(5+1.1,-1.75);

		\draw (5+.1,-.4) arc [radius=.1, start angle=0, end angle=-90];
		\draw (5,-.5) arc [radius=.1, start angle=90, end angle=180];
		\draw (5+.3,-.4) arc [radius=.1, start angle=0, end angle=-90];
		\draw (5+.2,-.5) arc [radius=.1, start angle=90, end angle=180];
		\draw (5-.1,-.6)--(5-.1,-1.75);
		\draw (5+.1,-.6)--(5+.1,-1.75);
		\draw [fill] (5+-.05,-1.6) circle [radius=.01];
		\draw [fill] (5+0,-1.6) circle [radius=.01];
		\draw [fill] (5+.05,-1.6) circle [radius=.01];
		\node [above] at (5+0,-1.6) {\small $k$};
		
		\draw [fill] (5+.6,-.2) circle[radius=.01];
		\draw [fill] (5+.7,-.2) circle[radius=.01];
		\draw [fill] (5+.8,-.2) circle[radius=.01];
		\node [right] at (5+1.2,-.6) {$)\circ($};
		
			\draw (7.3+0,0)--(7.3+1.2,0);
			\draw (7.3+1.2,0)--(7.3+1.2,.6);
			\draw (7.3+1.2,.6)--(7.3+0,.6);
			\draw (7.3+0,.6)--(7.3+0,0);
			\draw (7.3+.6,.6)--(7.3+.6,.8);
			\node at (7.3+.6,.3) {$S_{n-1}$};
			\draw (7.3+.1,0)--(7.3+.1,-.4);
			\draw (7.3+.3,0)--(7.3+.3,-.4);
			\draw (7.3+.5,0)--(7.3+.5,-1.2);
			\draw (7.3+.9,0)--(7.3+.9,-1.75);
			\draw (7.3+1.1,0)--(7.3+1.1,-1.75);
			\draw (7.3+.25,-1.2)--(7.3+.75,-1.2);
			\draw (7.3+.25,-1.2)--(7.3+.25,-1.45);
			\draw (7.3+.75,-1.2)--(7.3+.75,-1.45);
			\draw (7.3+.75,-1.45)--(7.3+.25,-1.45);
			\node at (7.3+.5,-1.325) {\small $X$};
			\draw (7.3+.35,-1.45)--(7.3+.35,-1.75);
			\draw (7.3+.45,-1.45)--(7.3+.45,-1.75);
			\draw (7.3+.65,-1.45)--(7.3+.65,-1.75);
			\draw [fill] (7.3+.5,-1.6) circle [radius=.01];
			\draw [fill] (7.3+.55,-1.6) circle [radius=.01];
			\draw [fill] (7.3+.6,-1.6) circle [radius=.01];
			\draw (7.3+.1,-.4) arc [radius=.1, start angle=0, end angle=-90];
			\draw (7.3,-.5) arc [radius=.1, start angle=90, end angle=180];
			\draw (7.3+.3,-.4) arc [radius=.1, start angle=0, end angle=-90];
			\draw (7.3+.2,-.5) arc [radius=.1, start angle=90, end angle=180];
			\draw (7.3-.1,-.6)--(7.3-.1,-1.75);
			\draw (7.3+.1,-.6)--(7.3+.1,-1.75);
			\draw [fill] (7.3+-.05,-1.6) circle [radius=.01];
			\draw [fill] (7.3+0,-1.6) circle [radius=.01];
			\draw [fill] (7.3+.05,-1.6) circle [radius=.01];
			\node [above] at (7.3+0,-1.6) {\small $k$};
			
			\draw [fill] (7.3+.6,-.2) circle[radius=.01];
			\draw [fill] (7.3+.7,-.2) circle[radius=.01];
			\draw [fill] (7.3+.8,-.2) circle[radius=.01];
			
			\node [right] at (7.5+1.2,-.6) {$,S_n)$};
		\end{tikzpicture}
		\end{equation*}
	
		By Definition \ref{def:basicform}, $S_n=X_0\cdot X_{N-1}\cdot X_{2(N-1)} \cdots X_{n(N-1)}$. Let $R=X_{(m+1)(N-1)}\cdot X_{n(N-1)}$. By Definition \ref{def:xk}, $a(k)$ is the smallest integer such that $(N-1)a(k)+b(k)=k$ with $0\leq b(k)<N$. 
		
			\begin{equation*}
			\begin{tikzpicture}
			\draw (0+0,0)--(0+1.2,0);
			\draw (0+1.2,0)--(0+1.2,.6);
			\draw (0+1.2,.6)--(0+0,.6);
			\draw (0+0,.6)--(0+0,0);
			\draw (0+.6,.6)--(0+.6,.8);
			\node at (0+.6,.3) {$S_{n-1}$};
			\draw (0+.1,0)--(0+.1,-.4);
			\draw (0+.3,0)--(0+.3,-.4);
			\draw (0+.5,0)--(0+.5,-1.2);
			\draw (0+.9,0)--(0+.9,-1.75);
			\draw (0+1.1,0)--(0+1.1,-1.75);
			\draw (0+.25,-1.2)--(0+.75,-1.2);
			\draw (0+.25,-1.2)--(0+.25,-1.45);
			\draw (0+.75,-1.2)--(0+.75,-1.45);
			\draw (0+.75,-1.45)--(0+.25,-1.45);
			\node at (0+.5,-1.325) {\small $X$};
			\draw (0+.35,-1.45)--(0+.35,-1.75);
			\draw (0+.45,-1.45)--(0+.45,-1.75);
			\draw (0+.65,-1.45)--(0+.65,-1.75);
			\draw [fill] (0+.5,-1.6) circle [radius=.01];
			\draw [fill] (0+.55,-1.6) circle [radius=.01];
			\draw [fill] (0+.6,-1.6) circle [radius=.01];
			\draw (0+.1,-.4) arc [radius=.1, start angle=0, end angle=-90];
			\draw (0,-.5) arc [radius=.1, start angle=90, end angle=180];
			\draw (0+.3,-.4) arc [radius=.1, start angle=0, end angle=-90];
			\draw (0+.2,-.5) arc [radius=.1, start angle=90, end angle=180];
			\draw (0-.1,-.6)--(0-.1,-1.75);
			\draw (0+.1,-.6)--(0+.1,-1.75);
			\draw [fill] (0+-.05,-1.6) circle [radius=.01];
			\draw [fill] (0+0,-1.6) circle [radius=.01];
			\draw [fill] (0+.05,-1.6) circle [radius=.01];
			\node [above] at (0+0,-1.6) {\small $k$};
			
			\draw [fill] (0+.6,-.2) circle[radius=.01];
			\draw [fill] (0+.7,-.2) circle[radius=.01];
			\draw [fill] (0+.8,-.2) circle[radius=.01];
			
			\node [right] at (1.2,-.6) {$=$};
			
			\draw (2+0,0)--(2+1.2,0);
			\draw (2+1.2,0)--(2+1.2,.6);
			\draw (2+1.2,.6)--(2+0,.6);
			\draw (2+0,.6)--(2+0,0);
			\draw (2+.6,.6)--(2+.6,.8);
			\node at (2+.6,.3) {$S_{\alpha(k)}$};
				\draw (2+.1,0)--(2+.1,-.4);
				\draw (2+.3,0)--(2+.3,-.4);
				\draw (2+.5,0)--(2+.5,-1.2);
				\draw (2+.9,0)--(2+.9,-1.75);
				\draw (2+.25,-1.2)--(2+.75,-1.2);
				\draw (2+.25,-1.2)--(2+.25,-1.45);
				\draw (2+.75,-1.2)--(2+.75,-1.45);
				\draw (2+.75,-1.45)--(2+.25,-1.45);
				\node at (2+.5,-1.325) {\small $X$};
				\draw (2+.35,-1.45)--(2+.35,-1.75);
				\draw (2+.45,-1.45)--(2+.45,-1.75);
				\draw (2+.65,-1.45)--(2+.65,-1.75);
				\draw [fill] (2+.5,-1.6) circle [radius=.01];
				\draw [fill] (2+.55,-1.6) circle [radius=.01];
				\draw [fill] (2+.6,-1.6) circle [radius=.01];
				\draw (2+.1,-.4) arc [radius=.1, start angle=0, end angle=-90];
				\draw (2,-.5) arc [radius=.1, start angle=90, end angle=180];
				\draw (2+.3,-.4) arc [radius=.1, start angle=0, end angle=-90];
				\draw (2+.2,-.5) arc [radius=.1, start angle=90, end angle=180];
				\draw (2-.1,-.6)--(2-.1,-1.75);
				\draw (2+.1,-.6)--(2+.1,-1.75);
				\draw [fill] (2+-.05,-1.6) circle [radius=.01];
				\draw [fill] (2+0,-1.6) circle [radius=.01];
				\draw [fill] (2+.05,-1.6) circle [radius=.01];
				\node [above] at (2+0,-1.6) {\small $k$};
				
				\draw (2+1.1,0)--(2+1.1,-.4);
				\draw (2+1.1,-.4)--(2+1.5,-.6);
				\draw (2+1.1,-.6)--(2+1.9,-.6);
				\draw (2+1.9,-.6)--(2+1.9,-1);
				\draw (2+1.9,-1)--(2+1.1,-1);
				\draw (2+1.1,-1)--(2+1.1,-.6);
				\node at (2+1.5,-.8) {\small $R$};
				\draw (2+1.2,-1)--(2+1.2,-1.75);
				\draw (2+1.4,-1)--(2+1.4,-1.75);
				\draw (2+1.8,-1)--(2+1.8,-1.75);
			\draw [fill] (2+1.5,-1.375) circle [radius=.01];
			\draw [fill] (2+1.6,-1.375) circle [radius=.01];
			\draw [fill] (2+1.7,-1.375) circle [radius=.01];
			\draw [dashed, blue, thick] (2-.2,-1.1)--(2+2.1,-1.1);
			\draw [dashed, blue, thick] (2-.2,-.4)--(2+2.1,-.4);
			
			\node [right] at (2+2.1,-.6) {$=$};
			
					\draw (5+0,0)--(5+1.2,0);
					\draw (5+1.2,0)--(5+1.2,.6);
					\draw (5+1.2,.6)--(5+0,.6);
					\draw (5+0,.6)--(5+0,0);
					\draw (5+.6,.6)--(5+.6,.8);
					\node at (5+.6,.3) {$S_{\alpha(k)}$};
					\draw (5+.1,0)--(5+.1,-.4);
					\draw (5+.3,0)--(5+.3,-.4);
					\draw (5+.5,0)--(5+.5,-1.2+.6);
					\draw (5+.9,0)--(5+.9,-1.75);
					\draw (5+.25,-1.2+.6)--(5+.75,-1.2+.6);
					\draw (5+.25,-1.2+.6)--(5+.25,-1.45+.6);
					\draw (5+.75,-1.2+.6)--(5+.75,-1.45+.6);
					\draw (5+.75,-1.45+.6)--(5+.25,-1.45+.6);
					\node at (5+.5,-1.325+.6) {\small $X$};
					\draw (5+.35,-1.45+.6)--(5+.35,-1.75);
					\draw (5+.45,-1.45+.6)--(5+.45,-1.75);
					\draw (5+.65,-1.45+.6)--(5+.65,-1.75);
					\draw [fill] (5+.5,-1.6) circle [radius=.01];
					\draw [fill] (5+.55,-1.6) circle [radius=.01];
					\draw [fill] (5+.6,-1.6) circle [radius=.01];
					\draw (5+.1,-.4) arc [radius=.1, start angle=0, end angle=-90];
					\draw (5,-.5) arc [radius=.1, start angle=90, end angle=180];
					\draw (5+.3,-.4) arc [radius=.1, start angle=0, end angle=-90];
					\draw (5+.2,-.5) arc [radius=.1, start angle=90, end angle=180];
					\draw (5-.1,-.6)--(5-.1,-1.75);
					\draw (5+.1,-.6)--(5+.1,-1.75);
					\draw [fill] (5+-.05,-1.6) circle [radius=.01];
					\draw [fill] (5+0,-1.6) circle [radius=.01];
					\draw [fill] (5+.05,-1.6) circle [radius=.01];
					\node [above] at (5+0,-1.6) {\small $k$};
					
					\draw (5+1.1,0)--(5+1.1,-.4-.6);
					\draw (5+1.1,-.4-.6)--(5+1.5,-.6-.6);
					\draw (5+1.1,-.6-.6)--(5+1.9,-.6-.6);
					\draw (5+1.9,-.6-.6)--(5+1.9,-1-.6);
					\draw (5+1.9,-1-.6)--(5+1.1,-1-.6);
					\draw (5+1.1,-1-.6)--(5+1.1,-.6-.6);
					\node at (5+1.5,-.8-.6) {\small $R$};
					\draw (5+1.2,-1-.6)--(5+1.2,-1.75);
					\draw (5+1.4,-1-.6)--(5+1.4,-1.75);
					\draw (5+1.8,-1-.6)--(5+1.8,-1.75);
					\draw [fill] (5+1.5,-1.675) circle [radius=.01];
					\draw [fill] (5+1.6,-1.675) circle [radius=.01];
					\draw [fill] (5+1.7,-1.675) circle [radius=.01];
					\draw [dashed, blue, thick] (5-.2,-1.1)--(5+2.1,-1.1);
					\draw [dashed, blue, thick] (5-.2,-.4)--(5+2.1,-.4);
			\end{tikzpicture}
			\end{equation*}
			\begin{equation*}
			\begin{tikzpicture}
			\node [left] at (-.2,-.6) {$S_n=$};
				\draw (0+0,0)--(0+1.2,0);
				\draw (0+1.2,0)--(0+1.2,.6);
				\draw (0+1.2,.6)--(0+0,.6);
				\draw (0+0,.6)--(0+0,0);
				\draw (0+.6,.6)--(0+.6,.8);
				\node at (0+.6,.3) {$S_{\alpha(k)}$};
				\draw (0+.1,0)--(0+.1,-.4);
				\draw (0+.3,0)--(0+.3,-.4);
				\draw (0+.5,0)--(0+.5,-1.75);
				\draw (0+.9,0)--(0+.9,-.4);
				\draw (.6+.25,-1.2+.6)--(.6+.75,-1.2+.6);
				\draw (.6+.25,-1.2+.6)--(.6+.25,-1.45+.6);
				\draw (.6+.75,-1.2+.6)--(.6+.75,-1.45+.6);
				\draw (.6+.75,-1.45+.6)--(.6+.25,-1.45+.6);
				\draw (0+.7,-.6)--(0+.7,-1.75);
				\node at (.6+.5,-1.325+.6) {\small $X$};
					\draw (.6+.35,-1.45+.6)--(.6+.35,-1.75);
					\draw (.6+.45,-1.45+.6)--(.6+.45,-1.75);
					\draw (.6+.65,-1.45+.6)--(.6+.65,-1.05);
					\draw [fill] (.6+.5,-.95) circle [radius=.01];
					\draw [fill] (.6+.55,-.95) circle [radius=.01];
					\draw [fill] (.6+.6,-.95) circle [radius=.01];

				\draw (0+.9,-.4) arc [radius=.1, start angle=0, end angle=-90];
				\draw (0+.8,-.5) arc [radius=.1, start angle=90, end angle=180];
				\draw (0+.1,-.4) arc [radius=.1, start angle=0, end angle=-90];
				\draw (0,-.5) arc [radius=.1, start angle=90, end angle=180];
				\draw (0+.3,-.4) arc [radius=.1, start angle=0, end angle=-90];
				\draw (0+.2,-.5) arc [radius=.1, start angle=90, end angle=180];
				\draw (0-.1,-.6)--(0-.1,-1.75);
				\draw (0+.1,-.6)--(0+.1,-1.75);
				\draw [fill] (0+-.05,-1.6) circle [radius=.01];
				\draw [fill] (0+0,-1.6) circle [radius=.01];
				\draw [fill] (0+.05,-1.6) circle [radius=.01];
				\node [above] at (0+0,-1.6) {\small $k$};
				
				\draw (0+1.1,0)--(0+1.1,-.6);
				\draw (.6+.65,-1.05)--(0+1.5,-.6-.6);
				\draw (0+1.1,-.6-.6)--(0+1.9,-.6-.6);
				\draw (0+1.9,-.6-.6)--(0+1.9,-1-.6);
				\draw (0+1.9,-1-.6)--(0+1.1,-1-.6);
				\draw (0+1.1,-1-.6)--(0+1.1,-.6-.6);
				\node at (0+1.5,-.8-.6) {\small $R$};
				\draw (0+1.2,-1-.6)--(0+1.2,-1.75);
				\draw (0+1.4,-1-.6)--(0+1.4,-1.75);
				\draw (0+1.8,-1-.6)--(0+1.8,-1.75);
				\draw [fill] (0+1.5,-1.675) circle [radius=.01];
				\draw [fill] (0+1.6,-1.675) circle [radius=.01];
				\draw [fill] (0+1.7,-1.675) circle [radius=.01];
				\draw [dashed, blue, thick] (0-.2,-1.1)--(0+2.1,-1.1);
				\draw [dashed, blue, thick] (0-.2,-.4)--(0+2.1,-.4);
			\end{tikzpicture}
			\end{equation*}
	
		Therefore, by Definition \ref{def:equivalence},
		\begin{equation*}
		\begin{tikzpicture}
		\node [left] at (-.2,-.6) {$($};
			\draw (0+0,0)--(0+1.2,0);
			\draw (0+1.2,0)--(0+1.2,.6);
			\draw (0+1.2,.6)--(0+0,.6);
			\draw (0+0,.6)--(0+0,0);
			\draw (0+.6,.6)--(0+.6,.8);
			\node at (0+.6,.3) {$S_{\alpha(k)}$};
			\draw (0+.1,0)--(0+.1,-.4);
			\draw (0+.3,0)--(0+.3,-.4);
			\draw (0+.5,0)--(0+.5,-1.2);
			\draw (0+.9,0)--(0+.9,-1.75);
			\draw (0+.25,-1.2)--(0+.75,-1.2);
			\draw (0+.25,-1.2)--(0+.25,-1.45);
			\draw (0+.75,-1.2)--(0+.75,-1.45);
			\draw (0+.75,-1.45)--(0+.25,-1.45);
			\node at (0+.5,-1.325) {\small $X$};
			\draw (0+.35,-1.45)--(0+.35,-1.75);
			\draw (0+.45,-1.45)--(0+.45,-1.75);
			\draw (0+.65,-1.45)--(0+.65,-1.75);
			\draw [fill] (0+.5,-1.6) circle [radius=.01];
			\draw [fill] (0+.55,-1.6) circle [radius=.01];
			\draw [fill] (0+.6,-1.6) circle [radius=.01];
			\draw (0+.1,-.4) arc [radius=.1, start angle=0, end angle=-90];
			\draw (0,-.5) arc [radius=.1, start angle=90, end angle=180];
			\draw (0+.3,-.4) arc [radius=.1, start angle=0, end angle=-90];
			\draw (0+.2,-.5) arc [radius=.1, start angle=90, end angle=180];
			\draw (0-.1,-.6)--(0-.1,-1.75);
			\draw (0+.1,-.6)--(0+.1,-1.75);
			\draw [fill] (0+-.05,-1.6) circle [radius=.01];
			\draw [fill] (0+0,-1.6) circle [radius=.01];
			\draw [fill] (0+.05,-1.6) circle [radius=.01];
			\node [above] at (0+0,-1.6) {\small $k$};
			
			\draw (0+1.1,0)--(0+1.1,-.4);
			\draw (0+1.1,-.4)--(0+1.5,-.6);
			\draw (0+1.1,-.6)--(0+1.9,-.6);
			\draw (0+1.9,-.6)--(0+1.9,-1);
			\draw (0+1.9,-1)--(0+1.1,-1);
			\draw (0+1.1,-1)--(0+1.1,-.6);
			\node at (0+1.5,-.8) {\small $R$};
			\draw (0+1.2,-1)--(0+1.2,-1.75);
			\draw (0+1.4,-1)--(0+1.4,-1.75);
			\draw (0+1.8,-1)--(0+1.8,-1.75);
			\draw [fill] (0+1.5,-1.375) circle [radius=.01];
			\draw [fill] (0+1.6,-1.375) circle [radius=.01];
			\draw [fill] (0+1.7,-1.375) circle [radius=.01];
			\draw [dashed, blue, thick] (0-.2,-1.1)--(0+2.1,-1.1);
			\draw [dashed, blue, thick] (0-.2,-.4)--(0+2.1,-.4);
			
			\node [right] at (0+2.1,-.6) {$,S_n)=($};

				\draw (4+0,0)--(4+1.2,0);
				\draw (4+1.2,0)--(4+1.2,.6);
				\draw (4+1.2,.6)--(4+0,.6);
				\draw (4+0,.6)--(4+0,0);
				\draw (4+.6,.6)--(4+.6,.8);
				\node at (4+.6,.3) {$S_{\alpha(k)}$};
				\draw (4+.1,0)--(4+.1,-.4);
				\draw (4+.3,0)--(4+.3,-.4);
				\draw (4+.5,0)--(4+.5,-1.2+.6);
				\draw (4+.9,0)--(4+.9,-1.75);
				\draw (4+.25,-1.2+.6)--(4+.75,-1.2+.6);
				\draw (4+.25,-1.2+.6)--(4+.25,-1.45+.6);
				\draw (4+.75,-1.2+.6)--(4+.75,-1.45+.6);
				\draw (4+.75,-1.45+.6)--(4+.25,-1.45+.6);
				\node at (4+.5,-1.325+.6) {\small $X$};
				\draw (4+.35,-1.45+.6)--(4+.35,-1.75);
				\draw (4+.45,-1.45+.6)--(4+.45,-1.75);
				\draw (4+.65,-1.45+.6)--(4+.65,-1.75);
				\draw [fill] (4+.5,-1.6) circle [radius=.01];
				\draw [fill] (4+.55,-1.6) circle [radius=.01];
				\draw [fill] (4+.6,-1.6) circle [radius=.01];
				\draw (4+.1,-.4) arc [radius=.1, start angle=0, end angle=-90];
				\draw (4,-.5) arc [radius=.1, start angle=90, end angle=180];
				\draw (4+.3,-.4) arc [radius=.1, start angle=0, end angle=-90];
				\draw (4+.2,-.5) arc [radius=.1, start angle=90, end angle=180];
				\draw (4-.1,-.6)--(4-.1,-1.75);
				\draw (4+.1,-.6)--(4+.1,-1.75);
				\draw [fill] (4+-.05,-1.6) circle [radius=.01];
				\draw [fill] (4+0,-1.6) circle [radius=.01];
				\draw [fill] (4+.05,-1.6) circle [radius=.01];
				\node [above] at (4+0,-1.6) {\small $k$};
				
				\draw (4+1.1,0)--(4+1.1,-.4-.6);
				\draw (4+1.1,-.4-.6)--(4+1.5,-.6-.6);
				\draw (4+1.1,-.6-.6)--(4+1.9,-.6-.6);
				\draw (4+1.9,-.6-.6)--(4+1.9,-1-.6);
				\draw (4+1.9,-1-.6)--(4+1.1,-1-.6);
				\draw (4+1.1,-1-.6)--(4+1.1,-.6-.6);
				\node at (4+1.5,-.8-.6) {\small $R$};
				\draw (4+1.2,-1-.6)--(4+1.2,-1.75);
				\draw (4+1.4,-1-.6)--(4+1.4,-1.75);
				\draw (4+1.8,-1-.6)--(4+1.8,-1.75);
				\draw [fill] (4+1.5,-1.675) circle [radius=.01];
				\draw [fill] (4+1.6,-1.675) circle [radius=.01];
				\draw [fill] (4+1.7,-1.675) circle [radius=.01];
				\draw [dashed, blue, thick] (4-.2,-1.1)--(4+2.1,-1.1);
				\draw [dashed, blue, thick] (4-.2,-.4)--(4+2.1,-.4);
			
			\node [right] at (4+2,-.6) {$,$};
			
			\draw (6.5+0+0,0)--(6.5+0+1.2,0);
			\draw (6.5+0+1.2,0)--(6.5+0+1.2,.6);
			\draw (6.5+0+1.2,.6)--(6.5+0+0,.6);
			\draw (6.5+0+0,.6)--(6.5+0+0,0);
			\draw (6.5+0+.6,.6)--(6.5+0+.6,.8);
			\node at (6.5+0+.6,.3) {$S_{\alpha(k)}$};
			\draw (6.5+0+.1,0)--(6.5+0+.1,-.4);
			\draw (6.5+0+.3,0)--(6.5+0+.3,-.4);
			\draw (6.5+0+.5,0)--(6.5+0+.5,-1.75);
			\draw (6.5+0+.9,0)--(6.5+0+.9,-.4);
			\draw (6.5+.6+.25,-1.2+.6)--(6.5+.6+.75,-1.2+.6);
			\draw (6.5+.6+.25,-1.2+.6)--(6.5+.6+.25,-1.45+.6);
			\draw (6.5+.6+.75,-1.2+.6)--(6.5+.6+.75,-1.45+.6);
			\draw (6.5+.6+.75,-1.45+.6)--(6.5+.6+.25,-1.45+.6);
			\draw (6.5+0+.7,-.6)--(6.5+0+.7,-1.75);
			\node at (6.5+.6+.5,-1.325+.6) {\small $X$};
			\draw (6.5+.6+.35,-1.45+.6)--(6.5+.6+.35,-1.75);
			\draw (6.5+.6+.45,-1.45+.6)--(6.5+.6+.45,-1.75);
			\draw (6.5+.6+.65,-1.45+.6)--(6.5+.6+.65,-1.05);
			\draw [fill] (6.5+.6+.5,-.95) circle [radius=.01];
			\draw [fill] (6.5+.6+.55,-.95) circle [radius=.01];
			\draw [fill] (6.5+.6+.6,-.95) circle [radius=.01];

			\draw (6.5+0+.9,-.4) arc [radius=.1, start angle=0, end angle=-90];
			\draw (6.5+0+.8,-.5) arc [radius=.1, start angle=90, end angle=180];
			\draw (6.5+0+.1,-.4) arc [radius=.1, start angle=0, end angle=-90];
			\draw (6.5+0,-.5) arc [radius=.1, start angle=90, end angle=180];
			\draw (6.5+0+.3,-.4) arc [radius=.1, start angle=0, end angle=-90];
			\draw (6.5+0+.2,-.5) arc [radius=.1, start angle=90, end angle=180];
			\draw (6.5+0-.1,-.6)--(6.5+0-.1,-1.75);
			\draw (6.5+0+.1,-.6)--(6.5+0+.1,-1.75);
			\draw [fill] (6.5+0+-.05,-1.6) circle [radius=.01];
			\draw [fill] (6.5+0+0,-1.6) circle [radius=.01];
			\draw [fill] (6.5+0+.05,-1.6) circle [radius=.01];
			\node [above] at (6.5+0+0,-1.6) {\small $k$};
			
			\draw (6.5+0+1.1,0)--(6.5+0+1.1,-.6);
			\draw (6.5+.6+.65,-1.05)--(6.5+0+1.5,-.6-.6);
			\draw (6.5+0+1.1,-.6-.6)--(6.5+0+1.9,-.6-.6);
			\draw (6.5+0+1.9,-.6-.6)--(6.5+0+1.9,-1-.6);
			\draw (6.5+0+1.9,-1-.6)--(6.5+0+1.1,-1-.6);
			\draw (6.5+0+1.1,-1-.6)--(6.5+0+1.1,-.6-.6);
			\node at (6.5+0+1.5,-.8-.6) {\small $R$};
			\draw (6.5+0+1.2,-1-.6)--(6.5+0+1.2,-1.75);
			\draw (6.5+0+1.4,-1-.6)--(6.5+0+1.4,-1.75);
			\draw (6.5+0+1.8,-1-.6)--(6.5+0+1.8,-1.75);
			\draw [fill] (6.5+0+1.5,-1.675) circle [radius=.01];
			\draw [fill] (6.5+0+1.6,-1.675) circle [radius=.01];
			\draw [fill] (6.5+0+1.7,-1.675) circle [radius=.01];
			\draw [dashed, blue, thick] (6.5+0-.2,-1.1)--(6.5+0+2.1,-1.1);
			\draw [dashed, blue, thick] (6.5+0-.2,-.4)--(6.5+0+2.1,-.4);
			
			\node [right] at (6.5+2,-.6) {$)=x_k$};
		\end{tikzpicture}
		\end{equation*}
	
		Hence $g$ is written as the product of a word in $P_X$ with less length and $x_k$ for some $k\in\mathbb{N}$. Then by a induction on the length, $G_X$ is generated by $\{x_n:n\in\mathbb{N}\}$.
		
		Now we prove Relation \eqref{equ:Relation}. Suppose $k,n\in\mathbb{N}$ with $k<n$.
		\begin{align*}
		x_nx_k&=(S_{a(n)}\cdot X_n,S_{a(n)+1})\circ(S_{a(k)}\cdot X_k,S_{a(k)+1})\\
		&=(S_{a(n)}\cdot X_n,S_{a(n)+1})\circ(S_{a(n)+1}\cdot X_k,S_{a(n)+2})\\
		&=(S_{a(n)}\cdot X_n\cdot X_k,S_{a(n)+1}\cdot X_k)\circ(S_{a(n+N)}\cdot X_k,S_{a(n)+2})\\
		&=((S_{a(n)}\cdot X_n\cdot X_k,S_{a(n)+2})\\\
		\end{align*}
		By Definition \ref{def:xk}, we know that $a(n+N-1)=a(n)+1$. Therefore,
		\begin{align*}
		x_kx_{n+N-1}&=(S_{a(k)}\cdot X_k,S_{a(k)+1})\circ(S_{a(n+N-1)}\cdot X_{n+N-1},S_{a(n+N-1)+1})\\
		&=(S_{a(n)}\cdot X_k,S_{a(n)+1})\circ(S_{a(n)+1}\cdot X_{n+N-1},S_{a(n)+2})\\
		&=(S_{a(n)}\cdot X_k\cdot X_{n+N-1},S_{a(n)+1}\cdot X_{n+N-1})\circ(S_{a(n)+1}\cdot X_n,S_{a(n)+2})\\
		&=(S_{a(n)}\cdot X_k\cdot X_{n+N-1},S_{(a(n)+2)})    
		\end{align*}
		Hence $x_nx_k=x_kx_{n+N-1}$ by Proposition \ref{pro:verticalisotopy}.
	\end{proof}
	
	From the proof of Lemma \ref{lem:Relation}, we have the following proposition,
	\begin{corollary}
		Suppose $g\in P_X$, then there exists $n\in\mathbb{N};i_1,i_2,\cdots,i_n\in\mathbb{N};k_1,k_2,\cdots,k_n\in\mathbb{N}$ such that 
		\begin{equation*}
		g=x_{i_1}^{k_1}x_{i_2}^{k_2}\cdots x_{i_n}^{k_n}.
		\end{equation*}
		Furthermore, 
		\begin{equation*}
		g=1\Leftrightarrow k_1=k_2\cdots=k_n=0.
		\end{equation*}
	\end{corollary}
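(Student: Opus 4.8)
The plan is to extract both statements from the peeling-off procedure already used in the proof of Lemma~\ref{lem:Relation}, and then to identify the identity element at the level of diagrams via the Remark of Section~\ref{sec:def} (that $[(T,R)]=1$ exactly when $T$ and $R$ are isotopic in $Alg(X)_n$). For the existence of the asserted expression, note that by Lemma~\ref{lem:positive} it suffices to treat $g\in P_X$. Writing $g=(T,S_n)$ and applying Proposition~\ref{pro:form of T}, one strips a rightmost generator, expressing $g=g'\circ x_k$ with $g'\in P_X$ strictly shorter (in the sense of the proof of Lemma~\ref{lem:Relation}); iterating until one reaches a basic form, i.e.\ the identity, writes $g=x_{j_1}x_{j_2}\cdots x_{j_\ell}$ as a positive word in the $x_n$'s. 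Collecting equal consecutive letters gives the form $x_{i_1}^{k_1}\cdots x_{i_n}^{k_n}$; if one wants the genuine normal form one instead repeatedly uses Relation~\eqref{equ:Relation} in the shape $x_ux_v=x_vx_{u+N-1}$ ($v<u$), which is Proposition~\ref{pro:verticalisotopy}, to push the smallest index to the front and recurse on the tail, obtaining $i_1<i_2<\cdots<i_n$ with each $k_j\ge 1$ (the shape used below).

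The implication $k_1=\cdots=k_n=0\Rightarrow g=1$ is immediate, the right-hand side being the empty product. For the converse, suppose $g=x_{i_1}^{k_1}\cdots x_{i_n}^{k_n}$ is a nonempty positive word (so $n\ge 1$, all $k_j\ge 1$) with $g=1$; I will derive a contradiction. Stripping the rightmost generator as above, $g=g'\circ x_k$ with $g'\in P_X$, whence $g'=x_k^{-1}$. Now by Definition~\ref{def:xk}, $x_k^{-1}=[(S_{a(k)+1},\,S_{a(k)}\cdot X_k)]$, so it suffices to show $x_k^{-1}\notin P_X$, i.e.\ that no representative of $x_k^{-1}$ has a basic form in its second coordinate.

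The key diagrammatic observations are: (i) $S_{a(k)}\cdot X_k$ is \emph{not} a basic form, since it contains $a(k)+2$ copies of $X$, so it could only be $S_{a(k)+1}=S_{a(k)}\cdot X_{(a(k)+1)(N-1)}$, and these differ in their last generator because $a(k)(N-1)\le k<(a(k)+1)(N-1)$; and (ii) both $S_{a(k)}\cdot X_k$ and each basic form $S_m=X_0\cdot X_{N-1}\cdots X_{m(N-1)}$ are \emph{rigid}: their generator-indices are (weakly) increasing, so no vertical-isotopy move applies to them and, a short index check shows, no vertical-isotopy move produces them either. Granting that rigid words are isotopically isolated, any factorisation $S_m R=S_{a(k)}\cdot X_k$ or $S_m=S_{a(k)}\cdot X_k\cdot R$ of diagrams must be a factorisation of \emph{words}; chasing the equivalence of Definition~\ref{def:equivalence} then forces the last letter $X_k$ of an appropriate factor to coincide with the last letter $X_{(a(k)+1)(N-1)}$ of $S_{a(k)+1}$ (or of $S_m$), which is impossible by (i). Hence $x_k^{-1}\notin P_X$, contradicting $g'=x_k^{-1}\in P_X$, and so $g\ne 1$.

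The step that needs care — and the main obstacle — is the claim that isotopy of diagrams in $Alg(X)$ is generated by the vertical isotopy of Proposition~\ref{pro:verticalisotopy}; equivalently, that the rewriting system $\{\,X_uX_v\rightsquigarrow X_vX_{u+N-1}:v<u\,\}$ on words in $X$ is confluent and its equivalence classes are precisely the isotopy classes in $Alg(X)$. Confluence is a Diamond-Lemma computation on the single family of rules (the only overlaps occur in words $X_aX_bX_c$ with $c<b<a$, and both resolutions reach $X_cX_{b+N-1}X_{a+2(N-1)}$), termination follows by repeatedly moving the smallest index to the front and inducting on word length, and the identification with isotopy classes is the consistency of the vertical-isotopy skein theory: a copy of the $(1,N)$-tangle $X$ occurs in such a diagram as a labelled box with a single endpoint on top, so the boxes along the strands are linearly ordered and the only freedom in a planar-isotopy representative is to transpose two boxes not lying on a common strand — precisely a vertical-isotopy move. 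Once this is granted, the rigidity statements above are literal, $T\simeq S_m$ forces $T=S_m$ as words, and the proof goes through as sketched; combined with Lemma~\ref{lem:Relation} this is exactly what will be needed to upgrade the surjection $F_N\twoheadrightarrow G_X$ to the isomorphism of Theorem~\ref{thm:main1}.
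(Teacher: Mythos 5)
The paper itself offers no argument here beyond the phrase ``from the proof of Lemma \ref{lem:Relation}'', so the comparison is really about whether your argument stands on its own. Your treatment of the existence statement (peeling off the rightmost generator via Proposition \ref{pro:form of T}, exactly as in Lemma \ref{lem:Relation}) is fine, and you are right to isolate, as the genuine crux, the claim that isotopy in $Alg(X)$ is generated by the vertical isotopy of Proposition \ref{pro:verticalisotopy}; your diamond-lemma check of the overlap $X_aX_bX_c$ ($c<b<a$) and the termination argument are correct, and this is precisely the point the paper leaves implicit.

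The gap is in your proof that $x_k^{-1}\notin P_X$. The relation of Definition \ref{def:equivalence} is symmetric but not transitive as written, so the equivalence actually used is its transitive closure; by confluence a general equivalence $(T,S_m)\sim(S_{a(k)+1},\,S_{a(k)}\cdot X_k)$ amounts to a \emph{common refinement}: there exist $P,Q$ with $T\cdot P=S_{a(k)+1}\cdot Q$ and $S_m\cdot P=S_{a(k)}\cdot X_k\cdot Q$. You only rule out the two one-step cases $S_m\cdot R=S_{a(k)}\cdot X_k$ and $S_m=S_{a(k)}\cdot X_k\cdot R$, and your contradiction is extracted from the second coordinate alone. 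That cannot work in the general case: the equation $S_m\cdot P=S_{a(k)}\cdot X_k\cdot Q$ is \emph{always} solvable, since for any $Q$ the diagram $S_{a(k)}\cdot X_k\cdot Q$ has some maximal basic-form prefix $S_{\alpha}$ and one may take $m\le\alpha$ and $P$ the complementary factor. Any correct argument must use both coordinates simultaneously --- for instance by showing that $\alpha(\,\cdot\,)$ of Proposition \ref{pro:form of T} (equivalently, the underlying planar tree) is an isotopy invariant, so that $[(T,S_m)]=1$ forces $T=S_m$ as words, which the reduced form $T=S_{\alpha(T)}\cdot X_{r_1}\cdots X_{r_\ell}$ with $r_1\neq(\alpha(T)+1)(N-1)$ rules out whenever $\ell\ge1$. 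That route proves the corollary directly and makes the detour through $x_k^{-1}\notin P_X$ unnecessary; as it stands, that detour is the step that does not close.
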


	\begin{theorem}\label{thm:main}
		The group $G_X$ has a classical presentation 
		\begin{equation}
		G_X\cong\langle t_n,n\in\mathbb{N}\vert t_k^{-1}t_nt_k=t_{n+N-1},~\forall k<N\rangle.
		\end{equation}
		i.e, the group $G_X$ is isomorphic to $F_N$.
	\end{theorem}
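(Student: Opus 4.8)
The plan is to build the homomorphism $\phi\colon F_N\to G_X$ determined by $t_n\mapsto x_n$ and to prove it is an isomorphism. By Lemma~\ref{lem:Relation} the $x_n$ generate $G_X$ and satisfy $x_k^{-1}x_nx_k=x_{n+N-1}$ for $k<n$, so $\phi$ is a well-defined surjective homomorphism and the whole problem is the injectivity of $\phi$.

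For injectivity I would pass to the positive monoid. Let $\widehat P_N$ be the Brown--Thompson monoid presented by $\langle t_n\mid t_nt_k=t_{n+N-1}t_k,\ k<n\rangle^{+}$; classically it is cancellative, satisfies the Ore condition, embeds in $F_N$ with $F_N=\widehat P_N\widehat P_N^{-1}$, and each of its elements has a unique positive normal form $t_{i_1}t_{i_2}\cdots t_{i_s}$ with $i_1\le i_2\le\cdots\le i_s$. Given $w\in\ker\phi$, write $w=pq^{-1}$ with $p,q\in\widehat P_N$. Since $\phi(\widehat P_N)\subseteq P_X$ by Lemma~\ref{lem:positive}, and $P_X$ embeds in $G_X$ (immediate from Definition~\ref{def:equivalence}), the relation $\phi(p)\phi(q)^{-1}=\phi(w)=1$ forces $\phi(p)=\phi(q)$ inside $P_X$; hence $w=1$ in $F_N$ as soon as the restricted map $\widehat P_N\to P_X$ is injective. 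That restriction is surjective by the first half of the proof of Lemma~\ref{lem:Relation}, so the theorem reduces to showing that $\phi$ carries distinct positive normal forms to non-isotopic diagrams in $Alg(X)$.

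To this end I would introduce a diagrammatic normal form for $P_X$. Given $T\in Alg(X)_n$, apply Proposition~\ref{pro:form of T} and then repeatedly use the vertical-isotopy relation~\eqref{equ:verticalisotopy} to push each $X$-box as far to the top and to the left as it will go; this is exactly the rewriting $X_nX_k\rightsquigarrow X_kX_{n+N-1}$ for $k<n$, and it terminates at a canonical ``left-greedy staircase'' word $x_{i_1}x_{i_2}\cdots x_{i_s}$ with $i_1\le\cdots\le i_s$, which is in $\widehat P_N$-normal form. The key claim is that this canonical word depends only on the planar isotopy class of the diagram; granting it, $\phi$ sends the normal form $t_{i_1}\cdots t_{i_s}$ to the element of $P_X$ whose canonical word is $x_{i_1}\cdots x_{i_s}$, distinct normal forms land on distinct diagrams, and injectivity of $\widehat P_N\to P_X$ follows. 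The Corollary preceding the theorem is the base case of this invariance (a positive word maps to $1$ only when empty) and also rules out the degenerate overlaps that could a priori obstruct the left-greedy reduction.

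The hard part is precisely this invariance: that the only identifications among diagrams in $Alg(X)$ built from the single tangle $X$ and through-strands are consequences of the vertical-isotopy relation~\eqref{equ:verticalisotopy}, so that the left-greedy staircase form is a genuine planar-isotopy invariant. This is a purely combinatorial statement about the admissible positions of the $X$-boxes relative to the through-strands, and it is what upgrades the weak triviality criterion of the Corollary to the uniqueness of normal forms needed to turn the surjection $\phi$ into an isomorphism, giving $G_X\cong F_N$.
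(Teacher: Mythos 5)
Your reduction to positive elements is the same as the paper's: both write an arbitrary element of $F_N$ as $pq^{-1}$ with $p,q$ positive and observe that, by Lemma \ref{lem:Relation} and Lemma \ref{lem:positive}, injectivity of the map $t_n\mapsto x_n$ comes down to showing that two positive words $g,h$ with the same image in $P_X$ are already equal in $F_N$. The gap is that you stop exactly where the work begins. Your strategy is to attach to each diagram in $Alg(X)$ a left-greedy staircase word and to claim that this word is a planar-isotopy invariant, i.e.\ that the only identifications among positive diagrams are generated by the vertical-isotopy relation \eqref{equ:verticalisotopy}; but you explicitly defer this (``granting it'', ``the hard part is precisely this invariance'') and offer no argument for it. That claim is not a formality: it is the entire mathematical content of the theorem beyond Lemma \ref{lem:Relation}, since without it one only has the surjection $F_N\twoheadrightarrow G_X$. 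Confluence of the rewriting $t_nt_k\rightsquigarrow t_kt_{n+N-1}$ gives uniqueness of normal forms in the abstract monoid $\widehat P_N$, but says nothing about whether two words that happen to represent isotopic tangles must be related by those rewrites.

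The paper closes this gap by an induction on word length carried out directly on the diagrams. Writing $T_g=T_g'\cdot X_k$, it observes that the isotopy class of $T_g$ exhibits an $X$-box all of whose lower legs run unobstructed to consecutive positions on the bottom boundary starting after the $k$-th string; in any other word $T_h=T_{h_1}\cdot X_n\cdot T_{h_2}$ representing the same isotopy class, the corresponding box can be commuted past each subsequent letter $t_m$ using relation \eqref{equ:Relation} (the structure of $T_h$ forces $m<n$ or $m>n+N-1$ precisely because those legs reach the boundary), until $h$ is rewritten as $\widehat h'\,t_k$; one then cancels the common last letter and recurses, terminating at the Corollary preceding Theorem \ref{thm:main}. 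To salvage your normal-form route you would need an argument of exactly this kind --- locating each box of the staircase inside an arbitrary isotopic representative --- so in effect you would be reproving the paper's inductive step. As written, the proposal is a correct skeleton with its load-bearing lemma missing.
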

	\begin{proof}
		We fist denote $R$, the normal subgroup generated by $\{t^{-1}_kt_n^{-1}t_kt_{n+N-1},~\forall k<n\}$. Define the map $\Phi$ by sending $t_k$ to $x_k$. It follows that $\Phi$ extends to a surjective group homomorphism from $\langle t_n,n\in\mathbb{N}\vert t_k^{-1}t_nt_k=t_{n+N-1},~\forall k<N\rangle$ to $G_X$. Thus we only need to show $\Phi$ is injective. Note that for $k,n\in\mathbb{N},~k<n$,
		\begin{align*}
		t_k^{-1}t_n&=t_{n+N-1}t_k^{-1}\\
		t_n^{-1}t_k&=t_kt_{n+N-1}^{-1}
		\end{align*}
		Therefore, every element $g\in\langle t_n,n\in\mathbb{N}\vert t_k^{-1}t_nt_k=t_{n+N-1},~\forall k<N\rangle$, there exists $g_+,g_-$ such that $g_+g_-^{-1}$ where $g_\pm$ is a word on $\{t_1,t_2,\cdots\}$ with positive powers. These are called positive elements of Thompson group $F_N$. Thus to show $\Phi$ is injective, we only need to show that for every two positive elements $g,h\in F_N$, 
		\begin{equation*}
		\Phi(g)=\Phi(h)\Leftrightarrow g=h
		\end{equation*}
		Since $g,h$ are positive elements and $\Phi$ is a group homomorphism, $\Phi(g),\Phi(h)\in P_X$. Hence there exists $T_g,T_h\in Alg(X)_{n(N-1)+1}$ such that $\Phi(g)=(T_g,S_n)$ and $\Phi(h)=(T_h,S_n)$ where $T_g$ and $T_h$ are isotopically equivalent.
		
		Assume $T_g=T_g^\prime \cdot X_k$, i.e,
		\begin{equation*}
		\begin{tikzpicture}
		\node [left] at (-.2,-.1) {$T_g=$};
		\draw (0,0)--(1.6,0);
		\draw (1.6,0)--(1.6,.8);
		\draw (1.6,.8)--(0,.8);
		\draw (0,.8)--(0,0);
		\node at (.8,.4) {$T_g^\prime$};
		\draw (.8,.8)--(.8,1.2);
		\draw (1.5,0)--(1.5,-1);
		\draw (.1,0)--(.1,-1);
		\draw (.8,0)--(.8,-.3);
		\draw (.5,-.3)--(1.1,-.3);
		\draw (.5,-.3)--(.5,-.7);
		\draw (.5,-.7)--(1.1,-.7);
		\draw (1.1,-.7)--(1.1,-.3);
		\node at (.8,-.5) {$X$};
		\draw (.6,-.7)--(.6,-1);
		\draw (.7,-.7)--(.7,-1);
		\draw (1,-.7)--(1,-1);
		\draw [fill] (1-.075,-.85) circle [radius=.01];
		\draw [fill] (1-.15,-.85) circle [radius=.01];
		\draw [fill] (1-.225,-.85) circle [radius=.01];
		\draw (1.2,0)--(1.2,-1);
		\draw (.4,0)--(.4,-1);
		\draw [fill] (.25,-.5) circle [radius=.01];
		\draw [fill] (.25-.075,-.5) circle [radius=.01];
		\draw [fill] (.25+.075,-.5) circle [radius=.01];
		\node [above] at (.25,-.5) {$k$};
			\draw [fill] (1.35,-.5) circle [radius=.01];
			\draw [fill] (1.35-.075,-.5) circle [radius=.01];
			\draw [fill] (1.35+.075,-.5) circle [radius=.01];
		\end{tikzpicture}
		\end{equation*}
		Since $T_g$ and $T_h$ are isotopically equivalent, $T_h$ must be of the form:
		\begin{equation*}
		\begin{tikzpicture}
		\draw [dashed, blue, thick] (0,0)--(1.6,0);
		\draw (.8,0)--(.8,.4);
		\draw [dashed,blue, thick] (0,-.5)--(1.6,-.5);
		\draw [fill] (.8,-.1) circle [radius=.01];
		\draw [fill] (.8,-.2) circle [radius=.01];
		\draw [fill] (.8,-.3) circle [radius=.01];
		\draw [fill] (.8,-.4) circle [radius=.01];
		\draw (0.5,-.7)--(1.1,-.7);
		\draw (0.5,-1.1)--(1.1,-1.1);
		\draw (.5,-.7)--(.5,-1.1);
		\draw (1.1,-.7)--(1.1,-1.1);
		\node at (.8,-.9) {$X$};
		\draw (.8,-.7)--(.8,-.5);
		\draw [red] (.6,-1.1)--(.6,-2.4);
		\draw (.7,-1.1)--(.7,-2.4);
		\draw (1,-1.1)--(1,-2.4);
		\draw [fill] (.85,-1.25) circle [radius=.01];
		\draw [fill] (.85-.075,-1.25) circle [radius=.01];
		\draw [fill] (.85+.075,-1.25) circle [radius=.01];
		\draw (.4,-.5)--(.4,-1.4);
		\draw (.1,-.5)--(.1,-1.4);
		\draw (1.6-.4,-.5)--(1.6-.4,-1.4);
		\draw (1.6-.1,-.5)--(1.6-.1,-1.4);
		\draw [fill] (.25,-.85) circle [radius=.01];
		\draw [fill] (.25-.075,-.85) circle [radius=.01];
		\draw [fill] (.25+.075,-.85) circle [radius=.01];
		\draw [fill] (1.6-.25,-.85) circle [radius=.01];
		\draw [fill] (1.6-.25-.075,-.85) circle [radius=.01];
		\draw [fill] (1.6-.25+.075,-.85) circle [radius=.01];
		\draw [dashed, blue, thick] (0,-1.4)--(1.6,-1.4);
		\draw [fill] (.25,-1.5) circle [radius=.01];
		\draw [fill] (.25,-1.6) circle [radius=.01];
		\draw [fill] (.25,-1.7) circle [radius=.01];
		\draw [fill] (.25,-1.8) circle [radius=.01];
		\draw [fill] (1.35,-1.5) circle [radius=.01];
		\draw [fill] (1.35,-1.6) circle [radius=.01];
		\draw [fill] (1.35,-1.7) circle [radius=.01];
		\draw [fill] (1.35,-1.8) circle [radius=.01];
		\draw [dashed, blue, thick] (0,-1.9)--(1.6,-1.9);
			\draw [fill] (.85,-2.15) circle [radius=.01];
			\draw [fill] (.85-.075,-2.15) circle [radius=.01];
			\draw [fill] (.85+.075,-2.15) circle [radius=.01];
			
				\draw (.4,-1.9)--(.4,-2.4);
				\draw (.1,-1.9)--(.1,-2.4);
				\draw (1.6-.4,-1.9)--(1.6-.4,-2.4);
				\draw (1.6-.1,-1.9)--(1.6-.1,-2.4);
				\draw [fill] (.25,-2.15) circle [radius=.01];
				\draw [fill] (.25-.075,-2.15) circle [radius=.01];
				\draw [fill] (.25+.075,-2.15) circle [radius=.01];
				\draw [fill] (1.6-.25,-2.15) circle [radius=.01];
				\draw [fill] (1.6-.25-.075,-2.15) circle [radius=.01];
				\draw [fill] (1.6-.25+.075,-2.15) circle [radius=.01];
		\draw [dashed, blue, thick] (0,-2.4)--(1.6,-2.4);
		
		\draw [fill,red] (.4,-2.4) circle [radius=.05];
		\node [below] at (.4,-2.4) {$k$};
		
		\end{tikzpicture}
		\end{equation*}
		where the red string is the $(k+1)$th string from the left.    Therefore, $T_h=T_{h_1}\cdot X_n\cdot T_{h_2}$, where $X_n$ corresponds to the red layer. Following from the proof of Lemma \ref{lem:Relation}, we have $h=h_1 t_n h_2$.
		
		Suppose $h_2=1$, then $n=k$ and $g=g^\prime t_k$, $h=h^\prime t_k$ for some $g^\prime,h^\prime$ positive elements. Note that 
		\begin{equation*}
		g=h\Leftrightarrow g^\prime=h^\prime.
		\end{equation*}
		
		Suppose $h_2\neq 1$, let $t_m$ be the first word in $h_2$, i.e, $h=h_1 t_n t_m h_3$. 
		
		By the structure of $T_h$, we know that either $m<n$ or $m>n+N-1$. If $m<n$, then $t_nt_m=t_mt_{n+N-1}$; if $m>n+N-1$, then $t_nt_m=t_{m-N+1}t_n$ by Relation \eqref{equ:Relation}. By symmetry we just discuss the first case. Let $\widetilde{h}=h_1 t_m t_{n+N-1} h_3$.
		\begin{equation*}
		\begin{tikzpicture}
		\node [left] at (-1.8,-1.45) {$T_h=$};
			\draw [dashed, blue, thick] (-1.6,0)--(1.6,0);
			\draw (.8,0)--(.8,.4);
			\draw [dashed,blue, thick] (-1.6,-.5)--(1.6,-.5);
			\draw [fill] (.8,-.1) circle [radius=.01];
			\draw [fill] (.8,-.2) circle [radius=.01];
			\draw [fill] (.8,-.3) circle [radius=.01];
			\draw [fill] (.8,-.4) circle [radius=.01];
			\draw (0.5,-.7)--(1.1,-.7);
			\draw (0.5,-1.1)--(1.1,-1.1);
			\draw (.5,-.7)--(.5,-1.1);
			\draw (1.1,-.7)--(1.1,-1.1);
			\node at (.8,-.9) {$X$};
			\draw [dashed,blue, thick] (-1.6,-1.4)--(1.6,-1.4);
				\draw (-1.1+0.5,-.7-.9)--(-1.1+1.1,-.7-.9);
				\draw (-1.1+0.5,-1.1-.9)--(-1.1+1.1,-1.1-.9);
				\draw (-1.1+.5,-.7-.9)--(-1.1+.5,-1.1-.9);
				\draw (-1.1+1.1,-.7-.9)--(-1.1+1.1,-1.1-.9);
				\node at (-1.1+.8,-.9-.9) {$X$};
				\draw (-1.1+.8,-.7-.9)--(-1.1+.8,-1.4);
				\draw (-1.1+.6,-1.1-.9)--(-1.1+.6,-2.3);
				\draw (-1.1+.7,-1.1-.9)--(-1.1+.7,-2.3);
				\draw (-1.1+1,-1.1-.9)--(-1.1+1,-2.3);
					\draw [fill] (-1.1+.85,-1.25-.9) circle [radius=.01];
					\draw [fill] (-1.1+.85-.075,-1.25-.9) circle [radius=.01];
					\draw [fill] (-1.1+.85+.075,-1.25-.9) circle [radius=.01];
					\draw [fill] (-1.1+.25,-.85) circle [radius=.01];
					\draw [fill] (-1.1+.25-.075,-.85) circle [radius=.01];
					\draw [fill] (-1.1+.25+.075,-.85) circle [radius=.01];
						\draw (-1.1+.4,-.5)--(-1.1+.4,-1.4-.9);
						\draw (-1.1+.1,-.5)--(-1.1+.1,-1.4-.9);
						
						\draw (-1.1+.1,-1.9-.9)--(-1.1+.1,-2.4-.9);
				
			\draw (.8,-.7)--(.8,-.5);
			\draw [red] (.6,-1.1)--(.6,-2.4-.9);
			\draw (.7,-1.1)--(.7,-2.4-.9);
			\draw (1,-1.1)--(1,-2.4-.9);
			\draw [fill] (.85,-1.25) circle [radius=.01];
			\draw [fill] (.85-.075,-1.25) circle [radius=.01];
			\draw [fill] (.85+.075,-1.25) circle [radius=.01];
			\draw (.4,-.5)--(.4,-1.4-.9);
			\draw (.1,-.5)--(.1,-1.4-.9);
			\draw (1.6-.4,-.5)--(1.6-.4,-1.4);
			\draw (1.6-.1,-.5)--(1.6-.1,-1.4);
			\draw [fill] (.25,-.85) circle [radius=.01];
			\draw [fill] (.25-.075,-.85) circle [radius=.01];
			\draw [fill] (.25+.075,-.85) circle [radius=.01];
			\draw [fill] (1.6-.25,-.85) circle [radius=.01];
			\draw [fill] (1.6-.25-.075,-.85) circle [radius=.01];
			\draw [fill] (1.6-.25+.075,-.85) circle [radius=.01];
			\draw [dashed, blue, thick] (-1.6,-1.4-.9)--(1.6,-1.4-.9);
			\draw [fill] (.25,-1.5-.9) circle [radius=.01];
			\draw [fill] (.25,-1.6-.9) circle [radius=.01];
			\draw [fill] (.25,-1.7-.9) circle [radius=.01];
			\draw [fill] (.25,-1.8-.9) circle [radius=.01];
			\draw [fill] (1.35,-1.5-.9) circle [radius=.01];
			\draw [fill] (1.35,-1.6-.9) circle [radius=.01];
			\draw [fill] (1.35,-1.7-.9) circle [radius=.01];
			\draw [fill] (1.35,-1.8-.9) circle [radius=.01];
			\draw [dashed, blue, thick] (-1.6,-1.9-.9)--(1.6,-1.9-.9);
			\draw [fill] (.85,-2.15-.9) circle [radius=.01];
			\draw [fill] (.85-.075,-2.15-.9) circle [radius=.01];
			\draw [fill] (.85+.075,-2.15-.9) circle [radius=.01];
			
			\draw (.4,-1.9-.9)--(.4,-2.4-.9);
		
			\draw (1.6-.4,-1.9-.9)--(1.6-.4,-2.4-.9);
			\draw (1.6-.1,-1.9-.9)--(1.6-.1,-2.4-.9);
			\draw [fill] (-.6,-2.15-.9) circle [radius=.01];
			\draw [fill] (-.6-.1,-2.15-.9) circle [radius=.01];
			\draw [fill] (-.6+.1,-2.15-.9) circle [radius=.01];
			\draw [fill] (1.6-.25,-2.15-.9) circle [radius=.01];
			\draw [fill] (1.6-.25-.075,-2.15-.9) circle [radius=.01];
			\draw [fill] (1.6-.25+.075,-2.15-.9) circle [radius=.01];
			\draw [dashed, blue, thick] (-1.6,-2.4-.9)--(1.6,-2.4-.9);
			
			\draw [fill,red] (.4,-2.4-.9) circle [radius=.05];
			\node [below] at (.4,-2.4-.9) {$k$};
			
			\node [right] at (1.7,-1.45) {$=$};
			
				\draw [dashed, blue, thick] (4+-1.6,0)--(4+1.6,0);
				\draw (4+.8,0)--(4+.8,.4);
				\draw [dashed,blue, thick] (4+-1.6,-.5)--(4+1.6,-.5);
				\draw [fill] (4+.8,-.1) circle [radius=.01];
				\draw [fill] (4+.8,-.2) circle [radius=.01];
				\draw [fill] (4+.8,-.3) circle [radius=.01];
				\draw [fill] (4+.8,-.4) circle [radius=.01];
				\draw (4+0.5,-.7-.9)--(4+1.1,-.7-.9);
				\draw (4+0.5,-1.1-.9)--(4+1.1,-1.1-.9);
				\draw (4+.5,-.7-.9)--(4+.5,-1.1-.9);
				\draw (4+1.1,-.7-.9)--(4+1.1,-1.1-.9);
				\node at (4+.8,-.9-.9) {$X$};
				\draw [dashed,blue, thick] (4+-1.6,-1.4)--(4+1.6,-1.4);
				\draw (4+-1.1+0.5,-.7)--(4+-1.1+1.1,-.7);
				\draw (4+-1.1+0.5,-1.1)--(4+-1.1+1.1,-1.1);
				\draw (4+-1.1+.5,-.7)--(4+-1.1+.5,-1.1);
				\draw (4+-1.1+1.1,-.7)--(4+-1.1+1.1,-1.1);
				\node at (4+-1.1+.8,-.9) {$X$};
				\draw (4+-1.1+.8,-.7)--(4+-1.1+.8,-.5);
				\draw (4+-1.1+.6,-1.1)--(4+-1.1+.6,-1.4);
				\draw (4+-1.1+.7,-1.1)--(4+-1.1+.7,-1.4);
				\draw (4+-1.1+1,-1.1)--(4+-1.1+1,-1.4);
				\draw [fill] (4+-1.1+.85,-1.25) circle [radius=.01];
				\draw [fill] (4+-1.1+.85-.075,-1.25) circle [radius=.01];
				\draw [fill] (4+-1.1+.85+.075,-1.25) circle [radius=.01];
				\draw [fill] (4+-1.1+.25,-.85) circle [radius=.01];
				\draw [fill] (4+-1.1+.25-.075,-.85) circle [radius=.01];
				\draw [fill] (4+-1.1+.25+.075,-.85) circle [radius=.01];
				\draw (4+-1.1+.4,-.5)--(4+-1.1+.4,-1.4-.9);
				\draw (4+-1.1+.1,-.5)--(4+-1.1+.1,-1.4-.9);
				
				\draw (4+-1.1+.1,-1.9-.9)--(4+-1.1+.1,-2.4-.9);
				
				\draw (4+.8,-.7-.9)--(4+.8,-.5-.9);
				\draw [red] (4+.6,-1.1-.9)--(4+.6,-2.4-.9);
				\draw (4+.7,-1.1-.9)--(4+.7,-2.4-.9);
				\draw (4+1,-1.1-.9)--(4+1,-2.4-.9);
				\draw [fill] (4+.85,-1.25-.9) circle [radius=.01];
				\draw [fill] (4+.85-.075,-1.25-.9) circle [radius=.01];
				\draw [fill] (4+.85+.075,-1.25-.9) circle [radius=.01];
				\draw (4+.4,-.5)--(4+.4,-1.4-.9);
				\draw (4+.1,-.5)--(4+.1,-1.4-.9);
				\draw (4+1.6-.4,-.5)--(4+1.6-.4,-1.4);
				\draw (4+1.6-.1,-.5)--(4+1.6-.1,-1.4);
				\draw [fill] (4+.25,-.85) circle [radius=.01];
				\draw [fill] (4+.25-.075,-.85) circle [radius=.01];
				\draw [fill] (4+.25+.075,-.85) circle [radius=.01];
				\draw [fill] (4+1.6-.25,-.85) circle [radius=.01];
				\draw [fill] (4+1.6-.25-.075,-.85) circle [radius=.01];
				\draw [fill] (4+1.6-.25+.075,-.85) circle [radius=.01];
				\draw [dashed, blue, thick] (4+-1.6,-1.4-.9)--(4+1.6,-1.4-.9);
				\draw [fill] (4+.25,-1.5-.9) circle [radius=.01];
				\draw [fill] (4+.25,-1.6-.9) circle [radius=.01];
				\draw [fill] (4+.25,-1.7-.9) circle [radius=.01];
				\draw [fill] (4+.25,-1.8-.9) circle [radius=.01];
				\draw [fill] (4+1.35,-1.5-.9) circle [radius=.01];
				\draw [fill] (4+1.35,-1.6-.9) circle [radius=.01];
				\draw [fill] (4+1.35,-1.7-.9) circle [radius=.01];
				\draw [fill] (4+1.35,-1.8-.9) circle [radius=.01];
				\draw [dashed, blue, thick] (4+-1.6,-1.9-.9)--(4+1.6,-1.9-.9);
				\draw [fill] (4+.85,-2.15-.9) circle [radius=.01];
				\draw [fill] (4+.85-.075,-2.15-.9) circle [radius=.01];
				\draw [fill] (4+.85+.075,-2.15-.9) circle [radius=.01];
				
				\draw (4+.4,-1.9-.9)--(4+.4,-2.4-.9);
				
				\draw (4+1.6-.4,-1.9-.9)--(4+1.6-.4,-2.4-.9);
				\draw (4+1.6-.1,-1.9-.9)--(4+1.6-.1,-2.4-.9);
				\draw [fill] (4+-.6,-2.15-.9) circle [radius=.01];
				\draw [fill] (4+-.6-.1,-2.15-.9) circle [radius=.01];
				\draw [fill] (4+-.6+.1,-2.15-.9) circle [radius=.01];
				\draw [fill] (4+1.6-.25,-2.15-.9) circle [radius=.01];
				\draw [fill] (4+1.6-.25-.075,-2.15-.9) circle [radius=.01];
				\draw [fill] (4+1.6-.25+.075,-2.15-.9) circle [radius=.01];
				\draw [dashed, blue, thick] (4+-1.6,-2.4-.9)--(4+1.6,-2.4-.9);
				
				\draw [fill,red] (4+.4,-2.4-.9) circle [radius=.05];
				\node [below] at (4+.4,-2.4-.9) {$k$};
				\node [right] at (5.8,-1.45) {$=T_{\widetilde{h}}$};
		\end{tikzpicture}
		\end{equation*}

		Note that $h^{-1}\widetilde{h}=h_3^{-1}t_m^{-1}t_n^{-1}t_mt_{n+N-1}h_3\in R$. Therefore to show $g^{-1}h\in R$, we only need to show $g^{-1}\widetilde{h}\in R$, i.e, to show that $\Phi(g)=\Phi(\widetilde{h})\Leftrightarrow g=\widetilde{h}$. By repeating this procedure, we obtain a positive element $\widehat{h}$ with $\widehat{h}=\widehat{h}^\prime t_k$. We only need to show 
		\begin{align*}
		\Phi(g)=\Phi(\widehat{h})&\Leftrightarrow g=\widehat{h}\\
		&\Leftrightarrow g^\prime=\widehat{h}^\prime
		\end{align*}
		
		In both cases, we reduce to the case that comparing positive elements of fewer lengths. Eventually it reduces to the case that for $i_1,\cdot, i_m; n_1,\cdot, n_m\in\mathbb{N}$
		\begin{align*}
		t_{i_1}^{n_1}t_{i_2}^{n_2}\cdots t_{i_m}^{n_m}&=1\\
		\Rightarrow x_{i_1}^{n_1}x_{i_2}^{n_2}\cdots x_{i_m}^{n_m}&=1
		\end{align*}
		Therefore $n_1=n_2=\cdots n_m=1$, i.e, $\Phi$ is injective. 
	\end{proof}
	
	\section{Examples}\label{sec:example}
	
	In this section, we will mainly introduce two examples defined by Jones. 
	\subsection{The Jones subgroup $\vec{F}$}\label{sec:vecF}
	Jones introduced the Jones subgroup $\vec{F}$ \cite{JonTP}, and it is showed that $\vec{F}\cong F_3$ by Golan and Sapir \cite{GoSa2015}. We first recall the definition of $\vec{F}$. 
	
	Let $\mathscr{P}_\bullet$ be the subfactor planar algebra, $\mathcal{TL}(2)$. We construct a unitary representation $\pi$ with Approach 1 in \S \ref{sec:pre} by taking the 2-box $R$ to be $\displaystyle2^{1/4}(\gra{id2}-\displaystyle\frac{1}{\sqrt{2}}\gra{Jones})$, a multiple of the $2$nd Jones-Wenzl idempotent. 
	
	For every element $g\in F$, the matrix coefficient 
	$\langle \pi(g)\xi,\xi\rangle$ has a specific chromatic meaning:
	
	Suppose $g$ has a pair of binary tree representation as $(T_+,T_-)$. We arrange the pair of trees in $\mathbb{R}^2$ such that the leaves of $T_\pm$ are the points $(1/2,0), (3/2, 0), \cdots ((2n-1)/2,0)$ with all the edges being the straight line segments sloping either up from left to right or down from left to right. $T_+$ is in the upper half plane and $T_-$ is in the lower half plane.  We construct a simply-laced planar graph $\Gamma(T_+,T_-)$ from the given trees. Let the vertices be $\{(0, 0),(1, 0), ...,(n, 0)\}$ contained in each region between the edges of each tree. The vertices $(k,0)$ and $(j,0)$ is connected by an edge if and only if the corresponding regions are separated by an edge sloping up in $T_+$ or down in $T_-$ from left to right. 
	
	\begin{proposition}[Jones, \cite{JonTP}]\label{pro:gamma}
		$\Gamma(T_+,T_-)$ defined above is consisting of a pair of trees, $\Gamma(T_+)$ in the upper half plane and $\Gamma(T_-)$ in the lower half plane having following properties:
		\begin{itemize}
			\item The vertices are $0,1,2,\cdots, n$. 
			\item Each vertex other than $0$ is connected to exactly one vertex to its left
			\item Each edge can be parameterised as $(x(t),y(t))$ for $0\leq t\leq1$ such that $x^\prime(t)>0$ and $y(t)>0$ on $(0,1)$ or $y(t)<0$ on $(0,1)$.
		\end{itemize}
	\end{proposition}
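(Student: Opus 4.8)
The plan is to prove the statement by induction on the number of carets of the tree pair, after first isolating the two trees. An edge of $\Gamma(T_+,T_-)$ crosses, by construction, a single edge of exactly one of the two trees; an edge crossing an up‑sloping edge of $T_+$ can be drawn inside the open upper half‑plane and an edge crossing a down‑sloping edge of $T_-$ inside the open lower half‑plane, so the graph splits as $\Gamma(T_+)\cup\Gamma(T_-)$ with the two pieces sharing only the vertices $\{0,1,\dots,n\}$. The reflection across the $x$‑axis carries $T_+$ equipped with its up‑sloping edges to $T_-$ equipped with its down‑sloping edges, so it suffices to prove the three bulleted properties for $\Gamma(T_+)$ drawn in the closed upper half‑plane; the statement for $\Gamma(T_-)$ then follows by this symmetry.

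Next I would fix the combinatorial bookkeeping. Drawing $T_+$ with its $n$ leaves at the half‑integers on the $x$‑axis and each edge a monotone segment, the complement of $T_+$ in the closed upper half‑plane is a finite disjoint union of open faces; each face meets the $x$‑axis in a union of the open gaps between consecutive leaves (with the two unbounded gaps included), and each marked point $(k,0)$ lies in the closure of a unique face. Each up‑sloping edge of $T_+$ is the left leg of a unique caret and lies on the common boundary of exactly two of these faces, so the recipe for $\Gamma(T_+)$ becomes: join $(i,0)$ to $(j,0)$ whenever their faces are the two sides of some up‑sloping edge of $T_+$. The base case is $n=1$, where $T_+$ is a single leaf, $\Gamma(T_+)$ is the lone vertex $0$, and all three properties hold vacuously.

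For the inductive step, choose a caret $c$ of $T_+$ both of whose children are leaves, say in positions $i$ and $i+1$, and let $T_+'$ be obtained by contracting $c$ to one leaf, so $T_+'$ has $n-1$ leaves and marked vertices $\{0,\dots,n\}\setminus\{i\}$ after the evident relabelling; by the inductive hypothesis $\Gamma(T_+')$ is a tree with the asserted properties. I would then track how $\Gamma(T_+)$ differs from $\Gamma(T_+')$: re‑inserting $c$ creates exactly one new up‑sloping edge (the left leg of $c$, rising from leaf $i$), carves out one new face just above that leaf, and does not change which pairs of faces are separated by any other up‑sloping edge (up to relabelling). Hence $\Gamma(T_+)$ is obtained from $\Gamma(T_+')$ by re‑indexing and adjoining the single vertex $(i,0)$ together with a single new edge joining it to a vertex to its left; since that edge can be realised as a small monotone arc near leaf $i$, disjoint from the rest of the inductively monotone, non‑crossing drawing, $\Gamma(T_+)$ is again a tree, still has each vertex other than $0$ joined to exactly one vertex on its left, and still admits a monotone planar realisation in the upper half‑plane. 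Combining with the mirror statement for $\Gamma(T_-)$ yields the proposition.

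The step I expect to require genuine care is the comparison of $\Gamma(T_+)$ with $\Gamma(T_+')$: one must verify that inserting $c$ neither creates nor destroys any edge other than the one attached to the new vertex — i.e.\ that the face/up‑sloping‑edge incidences away from $c$ are truly unaffected — and that no vertex thereby acquires a second left‑neighbour, which is exactly where planarity of the combined picture $T_+\cup\Gamma(T_+)$ forces the \emph{exactly one} in the second bullet. If the inductive bookkeeping becomes awkward, the alternative I would pursue is to write the left‑neighbour map down directly: for each marked vertex $j\ge 1$ locate, by walking up from the appropriate leaf, the unique up‑sloping edge of $T_+$ (or down‑sloping edge of $T_-$) whose two sides are the face of $j$ and the face of some $p(j)<j$, check that $\{j,p(j)\}$ is an edge, and count to see that the edges so produced are all of $\Gamma(T_+,T_-)$; then $p(j)<j$ gives acyclicity and, together with the half‑plane placement, the monotone‑arc drawing at once.
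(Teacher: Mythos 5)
The paper itself gives no proof of this proposition: it is stated as a result of Jones and cited to \cite{JonTP}, so there is no in-paper argument to compare yours against. That said, your overall strategy --- separating $\Gamma(T_+,T_-)$ into $\Gamma(T_+)$ and $\Gamma(T_-)$ by half-plane, reducing to the upper tree via the reflection $(x,y)\mapsto(x,-y)$, and inducting by contracting a caret both of whose children are leaves --- is the natural one, and your analysis of how such a contraction changes the face structure (one face and one dual edge disappear, all other edge/face incidences are preserved by a homeomorphism supported near the caret) is sound as far as it goes.

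There is, however, a genuine gap in the bookkeeping, already visible in your base case. With $n$ leaves at $(1/2,0),\dots,((2n-1)/2,0)$ the construction prescribes $n+1$ vertices $(0,0),\dots,(n,0)$, so the one-leaf tree has \emph{two} marked points, $0$ and $1$, not ``the lone vertex $0$''; for these to lie in distinct regions the root strand of the tree must be part of the picture. Your induction then adds one vertex and one edge per caret, which accounts for only the $n-1$ left legs of the carets and hence produces $n-1$ edges on $n+1$ vertices --- a forest with two components, not a tree. Concretely, the region containing $(n,0)$ is bounded on its left by the right spine of $T_+$, which consists entirely of right-child (down-sloping) edges, so no left leg of any caret ever supplies $(n,0)$ with a left neighbour; the missing $n$-th edge of $\Gamma(T_+)$ must come from the root edge of $T_+$, drawn with a definite slope so that it separates the rightmost region from a region to its left (and likewise for the root edge of $T_-$). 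This extra edge is not optional: without it the second bullet fails at $(n,0)$, and $\Gamma(T_+,T_-)$ would be disconnected, which is incompatible with the formula $\langle\pi(g)\xi,\xi\rangle=\frac12 Chr_{\Gamma(T_+,T_-)}(2)\le 1$ of Proposition \ref{pro:matrixcoef}. To repair the argument you should fix the base case (two vertices joined by the edge dual to the root strand) and verify in the inductive step that the dual of the root edge persists and continues to join the rightmost region to a region on its left; with that addition, your caret-contraction induction and the monotone-arc realisation of the new dual edge go through.
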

	
	\begin{proposition}[Jones, \cite{JonTP}] \label{pro:matrixcoef}
		We have the following for the matrix coefficient,
		\begin{equation*}
		\langle\pi(g)\xi,\xi\rangle=\frac{1}{2}Chr_{\Gamma(T_+,T_-)}(2),
		\end{equation*}
		where $Chr_{\Gamma(T_+,T_-)}(2)$ is the value of the chromatic polynomial for $\Gamma(T_+,T_-)$ at $2$, i.e, the number of $2$-coloring of $\Gamma(T_+,T_-)$.
	\end{proposition}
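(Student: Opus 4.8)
The plan is to convert the matrix coefficient into the Temperley--Lieb partition function of a single closed diagram and then evaluate that diagram by a chromatic state sum.

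\emph{Step 1: from the representation to a closed diagram.} Choose a standard dyadic partition $\mathcal{J}$ adapted to $g=(T_+,T_-)$, represent $\xi$ inside $\mathcal{H}(\mathcal{J})$, and apply $\pi(g)v=g_{\mathcal{J}}T^{\mathcal{I}}_{\mathcal{J}}(v)$ together with the description of the isometry $T^{\mathcal{I}}_{\mathcal{J}}$ as the forest $\mathcal{F}^{\mathcal{I}}_{\mathcal{J}}$ with one copy of $R$ inserted at each branching. Since $\xi$ is the through-strand vector and the inner product in $\mathscr{P}_\bullet$ is a closed-up diagram, this rewrites $\langle\pi(g)\xi,\xi\rangle$ as $c\cdot Z(D_g)$, where $D_g$ is the closed $\mathcal{TL}(2)$ diagram formed by stacking $T_+$ (a copy of $R$ at each of its internal vertices) against $T_-$ (a copy of $R^{*}$ at each of its internal vertices) and closing the outer boundary, $c$ collects the $2^{1/4}$ normalizations of all the $R$'s together with the trace normalization, and $Z$ is the Temperley--Lieb partition function (loop value $\sqrt2$). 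The key geometric point — which is exactly why $\Gamma$ is defined the way it is — is that the complementary regions of $D_g$ are canonically labelled $0,1,\dots,n$ and that a strand of $D_g$ separates the regions $i$ and $j$ precisely when $\{i,j\}$ is an edge of $\Gamma(T_+,T_-)$; I would first record this as a lemma, reading it straight off the planar picture of the two trees.

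\emph{Step 2: the chromatic state sum.} Expand every $R=2^{1/4}(\mathrm{id}_2-\tfrac1{\sqrt2}\,e)$, and likewise each $R^{*}$, into its two Temperley--Lieb terms. A state $s$ is then the same data as a subset $A\subseteq E(\Gamma)$: call an edge \emph{active} when the corresponding branching is resolved by the cup--cap $e$ rather than by $\mathrm{id}_2$. Each state contributes $(-1/\sqrt2)^{|A|}\,\sqrt2^{\,\ell(s)}$, where $\ell(s)$ is the number of closed loops of the resulting ordinary $\mathcal{TL}$ diagram. Because the regions of $D_g$ are the vertices of $\Gamma$ and activating an edge fuses the two regions it bordered, a planar Euler-characteristic count expresses $\ell(s)$ in terms of the number $c(A)$ of connected components of $(V(\Gamma),A)$ plus a correction depending only on $n$; substituting and cancelling the prefactor $c$ leaves
\[
\langle\pi(g)\xi,\xi\rangle=\tfrac12\sum_{A\subseteq E(\Gamma)}(-1)^{|A|}2^{\,c(A)}=\tfrac12\,Chr_{\Gamma(T_+,T_-)}(2),
\]
the last step being the deletion--contraction expansion of the chromatic polynomial. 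Conceptually this is just the statement that $\mathcal{TL}(2)$ equipped with the generator $R$ is the $\mathbb{Z}/2$ spin-model planar algebra, in which a closed diagram is evaluated by summing over $2$-colourings of one checkerboard family of regions forced to be proper across every strand, and the leftover $\tfrac12$ is the global $\mathbb{Z}/2$ redundancy of such a colouring (equivalently, the freedom in the colour of the region $0$).

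\emph{Main obstacle.} The real work is the bookkeeping in Step 2: pinning down the exact relation between $\ell(s)$ and $c(A)$ so that all the spurious powers of $\sqrt2$ (the $R$-normalizations, the trace normalization, and the $n$-dependent Euler correction) collapse to precisely the single factor $\tfrac12$. This amounts to applying Euler's formula to $D_g$ and its planar dual, using that $\Gamma$ is the union of the two trees $\Gamma(T_\pm)$ and hence behaves cleanly under planar duality, and checking that the $n$-dependent term cancels the prefactor up to $\tfrac12$. Once this normalization is nailed down, the identification with $Chr_{\Gamma(T_+,T_-)}(2)$ is immediate from the deletion--contraction recursion, and Proposition~\ref{pro:gamma} (already available) supplies the structural facts about $\Gamma$ that are needed along the way.
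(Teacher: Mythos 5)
The paper does not actually prove this proposition: it is quoted from Jones \cite{JonTP} and used as a black box, so there is no in-paper argument to compare yours against. On its own merits, your outline is essentially Jones's original argument. Reducing $\langle\pi(g)\xi,\xi\rangle$ to the Temperley--Lieb partition function of the closed diagram built from $T_+$ and $T_-$ with one copy of $R$ per caret, and then identifying that partition function with $Chr_{\Gamma(T_+,T_-)}$ evaluated at $\delta^2=2$, is exactly how the result is obtained; Jones phrases the identification as a deletion--contraction induction using $R\propto \mathrm{id}-\frac{1}{\delta}e$ against $Chr_G=Chr_{G-e}-Chr_{G/e}$, whereas you fully expand that induction into the Whitney rank sum $\sum_{A\subseteq E(\Gamma)}(-1)^{|A|}Q^{c(A)}$. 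These are interchangeable, so I would classify this as the same proof in expanded form rather than a genuinely different route.

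Two points need tightening before this is a proof. First, the ``key geometric lemma'' of your Step 1 is false as stated: it is not true that a strand of $D_g$ separates regions $i$ and $j$ exactly when $\{i,j\}\in E(\Gamma)$. Each caret contributes two sloping strands to $D_g$ but only one edge to $\Gamma$ (the one sloping up in $T_+$, respectively down in $T_-$), and the remaining strands contribute no edge at all. What your Step 2 actually uses, and what you should record as the lemma, is the bijection between carets (equivalently, insertions of $R$) and edges of $\Gamma$, together with the observation that the $e$-resolution of the $R$ at a caret fuses precisely the two regions joined by the corresponding edge of $\Gamma$, while the $\mathrm{id}$-resolution keeps them separated. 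Second, the deferred normalization does close up, and it is worth seeing why: by Proposition~\ref{pro:gamma} each $\Gamma(T_\pm)$ is a spanning tree on the $v$ vertices, so $\Gamma$ is connected with $|E(\Gamma)|=2(v-1)$; Euler's formula gives $\ell(A)=2c(A)+|A|-v+1$, so the state sum equals $2^{(1-v)/2}\,Chr_\Gamma(2)$, and the $2(v-1)$ factors of $2^{1/4}$ coming from the $R$'s contribute $2^{(v-1)/2}$ and cancel the $v$-dependence exactly. What remains is a universal constant coming only from the trace and vacuum-vector normalizations, and it is pinned to $\tfrac12$ by the case $g=\mathrm{id}$, where $\Gamma$ is edgeless, $Chr_\Gamma(2)=2$, and the coefficient is $1$. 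With those two repairs the argument is complete.
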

	\begin{remark}
		By definition of the chromatic polynomial, we have
		\begin{align*}
		Chr_{\Gamma(T_+,T_-)}(2)=
		\begin{cases}
		2, & \text{if~} \Gamma(T_+,T_-) \text{~is bipartite}\\
		0, & \text{if~} \Gamma(T_+,T_-) \text{~is not bipartite}
		\end{cases}
		\end{align*}
	\end{remark}
	\begin{definition}[the Jones subgroup]
		We denote the Jones subgroup $\vec{F}$ as the stabilizer of the vacuum vector, i.e,
		\begin{equation*}
		\vec{F}=\{g\in F\vert \pi(g)\xi=\xi\}
		\end{equation*} 
	\end{definition}
	\begin{remark}
		Suppose $g$ has a pair of trees representation as $(T_+,T_-)$. By the remark of Proposition \ref{pro:matrixcoef}, we have that $g\in\vec{F}$ if and only if $\Gamma(T_+,T_-)$ is a bipartite planar graph.
	\end{remark}
	
	Now we show that $\vec{F}$ is isomorphic to $F_3$. 
	\begin{lemma}\label{lem:alternating}
		Suppose $g\in\vec{F}$, then $g$ has a pair of trees representation as $(T_+,T_-)$ such that the coloring of the vertices of $\Gamma(T_+,T_-)$ is $\pm\mp\pm\mp\cdots\pm$ from the left to right. 
	\end{lemma}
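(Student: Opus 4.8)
The plan is to start from an arbitrary tree pair representing $g$ and to refine it repeatedly, each refinement removing one ``defect'' of the colouring without changing $g$ and without creating new defects, until no defect remains.

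First I would isolate the inputs. By Proposition~\ref{pro:matrixcoef} the number $\langle\pi(g)\xi,\xi\rangle$ depends only on $g$, and by the remark following Proposition~\ref{pro:matrixcoef} together with the definition of $\vec F$ it equals $1$ exactly when $\Gamma(T_+,T_-)$ is bipartite; hence $g\in\vec F$ if and only if $\Gamma(T_+,T_-)$ is bipartite for one, equivalently for every, tree pair $(T_+,T_-)$ representing $g$ --- in particular bipartiteness is stable under refinement of the pair. By Proposition~\ref{pro:gamma} the graph $\Gamma(T_+)$ is a spanning tree of the vertex set $\{0,1,\dots,n\}$, so $\Gamma(T_+,T_-)$ is connected and, being bipartite, carries a $2$-colouring $c$ that is unique up to the global swap $+\leftrightarrow-$. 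Writing $D(T_+,T_-)=\{\,j\mid c(j)=c(j+1)\,\}$, the colouring has the desired form $\pm\mp\pm\mp\cdots\pm$ precisely when $D(T_+,T_-)=\emptyset$.

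The crux is the effect of an elementary refinement --- attaching one caret to the $i$-th leaf of both $T_+$ and $T_-$, which again represents $g$. From the planar description in Proposition~\ref{pro:gamma} I would check that this refinement inserts a single new vertex $v$ into $\Gamma$, sitting between the old vertices $i-1$ and $i$, whose only neighbour is $i-1$ (the new edge occurring in $\Gamma(T_+)$ and in $\Gamma(T_-)$), and that every previously present edge of $\Gamma$ survives. Since $g$ is unchanged the refined $\Gamma$ is again connected and bipartite, and in its colouring $c(v)=-c(i-1)$ while all old vertices keep their colours. Then the induction on $|D|$ is immediate: if $D\neq\emptyset$ pick $j\in D$, so $c(j)=c(j+1)$, and refine the $(j+1)$-st leaf of both trees; the new vertex $v$ lies between $j$ and $j+1$ with colour $-c(j)=-c(j+1)$, so neither $\{j,v\}$ nor $\{v,j+1\}$ is a defect, every other consecutive pair and its colours are unchanged, and $|D|$ drops by exactly $1$. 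After finitely many refinements we obtain a tree pair for $g$ with empty defect set, i.e. alternating colouring.

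I expect the one delicate point to be exactly the local computation in the previous paragraph: checking from the geometry of Proposition~\ref{pro:gamma} that a single caret inserts a vertex attached only to its left neighbour and destroys no existing edge (in particular noting that a defect pair $\{j,j+1\}$ is never already joined by an edge of $\Gamma$, so no surviving edge conflicts with $c(v)$). Once this is established, stability of bipartiteness under refinement, uniqueness of $c$, and termination of the induction are all automatic.
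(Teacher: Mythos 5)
Your proposal is correct and follows essentially the same route as the paper: locate a consecutive pair of equally coloured vertices, add a caret to both trees at the leaf separating the corresponding regions so that the newly created region is adjacent only to the left vertex and hence receives the opposite colour, and iterate until the colouring alternates. The extra details you supply (stability of bipartiteness under refinement, uniqueness of the $2$-colouring via connectedness of $\Gamma(T_+)$, and the explicit check that the caret inserts a vertex joined only to its left neighbour without disturbing existing edges) are exactly the verifications the paper leaves implicit in its pictures.
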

	\begin{proof}
		Suppose $(T_+,T_-)$ is a pair of trees representation of $g\in\vec{F}$. Since $g\in\vec{F}$, $\Gamma(T_+,T_-)$ is a bipartite graph, i.e, there exists a coloring of the vertices. Suppose there exists a vertex $(i,0)$ with $i\in\mathbb{N}$ and the coloring of $(i,0)$ and $(i+1,0)$ are different, i.e, 
		\begin{equation*}
		\Gamma(T_+,T_-):\grb{++gamma}\longleftrightarrow(T_+,T_-):\grb{++tree}
		\end{equation*}
		We consider the pair of trees with by putting a caret on both $T_\pm$ on the edge with endpoints $(i+1/2,0)$. Therefore, we have
		\begin{equation*}
		\Gamma(T_+,T_-):\grb{+-+gamma}\longleftrightarrow (T_+,T_-):\grb{+-+tree}
		\end{equation*}
		Repeating this procedure, we reach a pair of trees representation of $g$ required by the lemma.
	\end{proof}
	\begin{theorem}\label{thm:vecf}
		The Jones subgroup $\vec{F}$ is isomorphic to $F_3$. 
	\end{theorem}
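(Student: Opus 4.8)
The plan is to exhibit $\vec{F}$ as a singly generated subgroup $G_X$ for a suitable $(1,3)$-tangle $X$ and then invoke Theorem~\ref{thm:main} with $N=3$. The tangle to take is the one whose underlying binary tree is the right-combed three-leaf caret (a caret whose right leaf carries a second caret); in the construction of $\pi$ this is the $(1,3)$-tangle obtained by attaching a second copy of the $2$-box $R$ to the right-hand output of $R$. Forgetting the box labels and retaining the underlying binary tree sends an element of $Alg(X)_n$ to a binary tree with $n$ leaves, and the assignment $(T_+,T_-)\mapsto$ the element of $F$ represented by the pair of underlying binary trees of $T_+$ and $T_-$ defines a map $\iota\colon G_X\to F$. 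That $\iota$ is a well-defined homomorphism is immediate: compatibility with the relation $\sim$ of Definition~\ref{def:equivalence} and with the product on $G_X$ both reduce to the fact that common refinements of $X$-diagrams map to common refinements of binary trees. Injectivity is also formal: a binary tree that underlies some element of $Alg(X)$ determines its $X$-tiling uniquely (peel off the three-leaf caret at the root and recurse), so $\iota([(T,S)])=1$ forces $T=S$ in $Alg(X)$. Hence it suffices to prove $\operatorname{im}\iota=\vec{F}$.

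For the inclusion $\operatorname{im}\iota\subseteq\vec{F}$, let $T_\pm\in Alg(X)_n$. A structural induction shows that every caret occurring in the underlying binary tree of $T_\pm$ has an \emph{odd} number of leaves in its left subtree. Reading the adjacency rule of Proposition~\ref{pro:gamma} off the diagram, an edge of $\Gamma(T_+,T_-)$ joins two vertices of the same parity exactly when the caret producing it has an \emph{even}-sized left subtree; consequently the left-to-right alternating $2$-colouring of the vertex set $\{0,1,\dots,n\}$ is a proper colouring, so $\Gamma(T_+,T_-)$ is bipartite and $Chr_{\Gamma(T_+,T_-)}(2)=2$. By Proposition~\ref{pro:matrixcoef} the matrix coefficient $\langle\pi(\iota([(T_+,T_-)]))\xi,\xi\rangle$ equals $1$, and since $\xi$ and $\pi(\iota([(T_+,T_-)]))\xi$ are unit vectors this forces $\pi(\iota([(T_+,T_-)]))\xi=\xi$; that is, $\iota([(T_+,T_-)])\in\vec{F}$.

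For the reverse inclusion, take $g\in\vec{F}$. By Lemma~\ref{lem:alternating} choose a tree pair representative $(T_+,T_-)$ of $g$ whose graph $\Gamma(T_+,T_-)$ carries the alternating colouring $\pm\mp\pm\cdots$ from left to right. By the same dictionary as above, this says precisely that every caret of $T_+$ and of $T_-$ has an odd-sized left subtree. Such trees need not themselves be built from $X$, but two further observations repair this. First, a binary tree in which every caret has an odd-sized left subtree and which has an odd number of leaves is exactly a tree built from copies of $X$: writing it as $(A)(B)$ with $A$ the left subtree, one finds $A$ has odd size with the same property and $B$ has even size with the same property, whence $B=(C)(D)$ with $C,D$ of odd size with the same property, so the whole tree is the $X$-tile $(A(CD))$ with $A,C,D$ built from $X$ by induction; the base case is a single leaf. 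Second, appending a caret at the globally rightmost leaf preserves the property and raises the leaf count by one, since that leaf lies in no caret's left subtree. Applying the second observation to both $T_+$ and $T_-$ if the common leaf count is even, we obtain a representative of $g$ in which both trees are built from $X$, i.e.\ an element of $Alg(X)_m\times Alg(X)_m$; this is $\iota$ of an element of $G_X$, so $g\in\operatorname{im}\iota$.

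Combining the two inclusions, $\iota$ is an isomorphism of $G_X$ onto $\vec{F}$; Theorem~\ref{thm:main} with $N=3$ gives $G_X\cong F_3$, and therefore $\vec{F}\cong F_3$. The main obstacle is the combinatorial dictionary used twice above: translating ``$\Gamma(T_+,T_-)$ admits the alternating $2$-colouring'' into ``the underlying trees of $T_\pm$, after possibly appending a caret at the rightmost leaf, are built from the three-leaf caret $X$''. This rests on pinning down exactly which two regions the up-sloping (resp.\ down-sloping) edge of a caret of $T_+$ (resp.\ $T_-$) separates, together with the structural characterisation of $X$-decomposable trees; everything else is formal bookkeeping with the definition of $G_X$ or a direct appeal to results already established.
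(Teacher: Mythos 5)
Your proposal is correct and follows the same overall strategy as the paper: choose the three‑leaf right‑comb tangle $X$, identify $\vec{F}$ with the singly generated subgroup $G_X$, and conclude by Theorem \ref{thm:main} with $N=3$. The execution differs in two ways worth noting. First, where the paper establishes $X$-decomposability of the representative trees by an induction on $\Gamma(T_+)$ that locates one $X$-shaped caret configuration at a time (the Claim inside its proof), you instead isolate a clean numerical dictionary --- an edge of $\Gamma(T_+,T_-)$ joins same-parity vertices exactly when the corresponding caret has an even-sized left subtree --- and then characterize $X$-decomposable trees as precisely the odd-leaf trees all of whose carets have odd-sized left subtrees, proved by a direct structural recursion. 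Your version makes the parity bookkeeping explicit (including the step of appending a caret at the rightmost leaf to fix the leaf-count parity, a point the paper handles only implicitly by assuming $2n-1$ leaves after Lemma \ref{lem:alternating}). Second, you prove both inclusions: the containment $\operatorname{im}\iota\subseteq\vec{F}$ via Proposition \ref{pro:matrixcoef} and the Cauchy--Schwarz equality case, and the well-definedness and injectivity of the identification $G_X\to F$, all of which the paper leaves tacit. What the paper's route buys is brevity and a picture-driven argument directly on $\Gamma(T_+)$; what yours buys is a reusable characterization of $Alg(X)$-trees and a complete verification that the identification is actually a group isomorphism onto $\vec{F}$ rather than just a statement about representatives.
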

	\begin{proof}
		Let $X=\gra{vecFGen}$. Suppose non-trivial $g\in\vec{F}$ with a pair of trees representation $(T_+,T_-)$ as in Lemma \ref{lem:alternating}. Assume $T_+$ has $2n-1$ leaves.
		\begin{claim}
			There exists $i\in\mathbb{N}$ such that in $T_+$
			$$\grb{T+config}$$
		\end{claim}
		\begin{proof}[\textbf{Proof of the claim}]
			By Proposition \ref{pro:gamma}, we consider the upper half part of $\Gamma(T_+,T_-)$, denoted by $\Gamma(T_+)$. The claim is equivalent to that there exists $i\in\mathbb{N}$ such that $(i+2,0)$ is connected to $(i+1,0)$ and $(i+1,0)$ is connected to $(i,0)$ by edges in $\Gamma(T_+)$. We prove this statement by induction on the number of vertices. The basic step is trivially true since the coloring is $\pm\mp\pm$. 
			
			Now suppose the statement holds for $2n-1$ vertices. Consider $\Gamma(T_+)$ has $2n+1$ vertices. Since $\Gamma(T_+)$, there exist $m\in\mathbb{N}$ such that $(m,0)$ is connected to $(m-1,0)$ by induction. Let $(k,0)$ be the only vertex that $(m-1,0)$ is connected to on its left. If $k=m-2$, then we find such vertex required by the claim. If $k<m-2$, we apply the induction on the induced sub graph of $\Gamma(T_+)$ on vertices $\{(k,0),(k+1,0),\cdots, (m.0)\}$. Therefore we concludes the claim. 
		\end{proof}
		With proving the claim, we obtain that $T_+\in Alg(X)_{2n-1}$. 
		Therefore $\vec{F}\cong F_3$ by Theorem \ref{thm:main}.
	\end{proof}
	
	\begin{remark}
		Jones defined a family of subgroups of $F$ which start with $\vec{F}$ using the $R$-matrix in spin models \cite{JonTP}. These subgroups have been redefined by Golan and Sapir denoted by $\vec{F}_n, n\geq2$ and showed to be isomorphic to $F_{n+1}$ \cite{GoSa2015}. These results can be obtained by using the same idea in the proof of Theorem \ref{thm:vecf}.
	\end{remark}
	
	\subsection{The 3-colorable subgroup}\label{sec:3color}
	Jones introduce the 3-colorable subgroup $\mathcal{F}$ and we now recall the definition.
	
	Let $\mathscr{P}_\bullet$ is the subfactor planar algebra $\mathcal{TL}(2)$. we set $S$ to be $3^{\frac{1}{4}}\graa{TTrivalent}$, where $\graa{TTrivalent}=\graa{TTrivalentf2}.$
	
	we construct a unitary representation $\pi$ with the trivalent vertex in Approach 2 in \S \ref{sec:pre}. Similar to the case in \S \ref{sec:vecF}, the matrix coefficient with respect to the vacuum vector has a specific chromatic meaning:
	
	Suppose $g\in F$ with $(T_+,T_-)$ a pair of trees representation. We arrange the pair of trees as in \S \ref{sec:vecF} to obtain a cubic planar graph. Let $\Gamma(T_+,T_-)$ be the dual graph of the cubic graph with vertices $\{(0,0),(1,0),\cdots,(n,0)\}$. For instance,
	\begin{equation}
	\grd{Dualgraph}
	\end{equation}
	
	\begin{proposition}
		The matrix coefficient satisfies the following:
		\begin{align*}
		\langle\pi(g)\xi,\xi\rangle=
		\begin{cases}
		1, &~\Gamma(T_+,T_-)~\text{is 3-colorable}\\
		0, &~\Gamma(T_+,T_-)~\text{is not 3-colorable}
		\end{cases}
		\end{align*}    
	\end{proposition}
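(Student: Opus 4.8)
The plan is to imitate Jones's treatment of the $\vec F$ construction (Propositions~\ref{pro:gamma} and~\ref{pro:matrixcoef}): realise the matrix coefficient as the partition function of a closed labelled diagram in $\mathcal{TL}(2)$, evaluate that diagram by the skein relations of the trivalent vertex $S$, and recognise the result as a renormalised count of proper $3$-colourings of $\Gamma(T_+,T_-)$. Concretely, write $g=(T_+,T_-)$ with $T_\pm$ having a common number $n$ of leaves. By the construction of $\pi$ in \S\ref{sec:pre}, $\pi(g)\xi$ is the vacuum acted on by the forest of $T_-$ with each internal vertex labelled $S$, followed by the inverse forest of $T_+$ with each internal vertex labelled $S^{*}$; pairing with $\xi$ closes the picture, so $\langle\pi(g)\xi,\xi\rangle$ is the $\mathcal{TL}(2)$-partition function of the closed diagram $D(T_+,T_-)$ obtained by gluing $T_+$ to $T_-$ along their leaves, labelling internal vertices of $T_+$ by $S$ and of $T_-$ by $S^{*}$, and joining the two roots through the vacuum.

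After smoothing the bivalent leaf vertices, $D(T_+,T_-)$ is a closed trivalent plane graph whose complementary regions are exactly the vertices $\{0,1,\dots,n\}$ of $\Gamma(T_+,T_-)$, two regions being joined by an edge of $\Gamma(T_+,T_-)$ precisely when they are adjacent in $D(T_+,T_-)$ --- the dual-graph picture recalled just before the statement. The trivalent vertex $S$ of $\mathcal{TL}(2)$ satisfies the three defining skein moves of a trivalent-vertex planar algebra: a bigon (bubble) relation, forced to read $S^{*}\!\circ S=\mathrm{id}$ by the normalising factor $3^{1/4}$, a triangle relation, and a square (recoupling) relation. By the classical correspondence between closed trivalent evaluations and chromatic polynomials (Penrose's evaluation, equivalently the zero-temperature Potts model), these three moves are exactly deletion--contraction for the chromatic polynomial of the region-adjacency graph at the value $3$; hence, expanding every $S$ into Temperley--Lieb diagrams rewrites the partition function of $D(T_+,T_-)$ as a state sum over its regions computing a fixed multiple of $Chr_{\Gamma(T_+,T_-)}(3)$, in complete analogy with the factor $\tfrac12\,Chr_{\Gamma}(2)$ produced by the $2$-box $R$ in \S\ref{sec:vecF}.

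What remains, and where I expect the real work to lie, is to pin down that multiple and show it collapses the answer to $\{0,1\}$. Here the outermost region and the extreme regions $0$ and $n$ are identified and forced to carry prescribed colours by the vacuum, so the state sum is a constrained $3$-colouring count of $\Gamma(T_+,T_-)$, and the combined normalisation --- the $3^{1/4}$'s attached to the internal vertices of $T_\pm$, the loop weights, and the boundary constraints --- is arranged so that this count equals $1$ exactly when $\Gamma(T_+,T_-)$ admits a proper $3$-colouring and $0$ otherwise. As a consistency tool I would use the caret-adding move (as in Lemma~\ref{lem:alternating}): it leaves $g$, hence the matrix coefficient, unchanged, and enlarges $\Gamma(T_+,T_-)$ only by a region of degree at most two, which preserves $3$-colourability; combined with the state-sum evaluation of the preceding paragraph and a direct check on minimal representatives (starting from $g=1$, where $\langle\pi(1)\xi,\xi\rangle=\|\xi\|^2=1$), this yields the proposition. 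The main obstacle is exactly the bookkeeping just described: verifying that the skein relations of $S$ reproduce deletion--contraction for $Chr(\,\cdot\,)(3)$ and that the accumulated $3^{1/4}$ normalisation together with the vacuum constraints reduces the multiple of $Chr_{\Gamma(T_+,T_-)}(3)$ to its $0/1$ indicator rather than to a larger constant; granting this, the result follows and (via Theorem~\ref{thm:main} with $N=4$) feeds into the identification of the $3$-colourable subgroup with $F_4$.
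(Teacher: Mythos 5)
The paper itself gives no proof of this proposition: it is stated as imported background from Jones's construction, exactly like Proposition \ref{pro:matrixcoef} for $\vec{F}$, so there is no in-paper argument to compare yours against. Judged on its own, your proposal identifies the right framework (the matrix coefficient is the $\mathcal{TL}$ partition function of the closed diagram obtained by gluing $T_+$ to $T_-$ with internal vertices labelled by the normalised trivalent vertex, and this evaluation is a chromatic state sum for the region-adjacency graph at $Q=3$), but it is a plan rather than a proof, and the step you defer is not mere bookkeeping.

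The crux is this: you assert that the evaluation is ``a fixed multiple of $Chr_{\Gamma(T_+,T_-)}(3)$'' and that the normalisation collapses it to a $0/1$ indicator. These two statements are incompatible for a graph-independent constant, since $Chr_{\Gamma}(3)$ takes arbitrarily many distinct positive values on planar graphs (compare a path and a triangle). The analogy with $\tfrac12 Chr_\Gamma(2)$ works only because a connected bipartite graph has \emph{exactly} two proper $2$-colourings. The analogous rigidity fact you need here --- and never state or prove --- is that $\Gamma(T_+,T_-)$ is the planar dual of a cubic graph, hence a triangulation, and a $3$-colourable planar triangulation has exactly $3!=6$ proper $3$-colourings: the three colours around any triangular face are forced to be distinct, so fixing one face propagates the colouring uniquely across every shared edge. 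Only with this does $Chr_{\Gamma}(3)\in\{0,6\}$ and only then can the accumulated $3^{1/4}$ normalisation and the vacuum constraints produce the claimed indicator. In addition, the verification that the skein relations satisfied by $3^{1/4}S$ in Temperley--Lieb really implement the $Q=3$ deletion--contraction state sum (with the correct loop and vertex weights) is exactly the content of the proposition, and you explicitly grant it rather than carry it out. As written, the proposal would not compile into a proof without supplying both of these ingredients.
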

	\begin{remark}
		We use $\{a,b,c\}$ as the coloring of $\Gamma(T_+,T_-)$ for $g$ having $(T_+,T_-)$ as a pair of representation with $\langle\pi(g)\xi,\xi\rangle=1$.
	\end{remark}
	\begin{definition}[Jones]
		We define the 3-colorable subgroup $\mathcal{F}$ as the stabilizer of the vacuum vector
		\begin{equation*}
		\mathcal{F}=\{g\in F\vert \pi(g)\xi=\xi\}
		\end{equation*}
	\end{definition}
	\begin{lemma}\label{lem:acb}
		Suppose $g\in\mathcal{F}$, then g has a pair of trees representation $(T_+,T_-)$ such that the coloring of the vertices of $\Gamma(T_+,T_-)$ is $acbacb\cdots ac$. 
	\end{lemma}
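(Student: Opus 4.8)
The plan is to run the argument of Lemma \ref{lem:alternating} in the $3$-colorable setting. Begin with $g\in\mathcal{F}$ and an arbitrary pair of trees representation $(T_+,T_-)$; by hypothesis the dual graph $\Gamma(T_+,T_-)$ carries a proper $3$-coloring by $\{a,b,c\}$. Fix once and for all the cyclic order $a\to c\to b\to a$, and relabel the colors so that the leftmost vertex $(0,0)$ is colored $a$. The goal is to modify $(T_+,T_-)$ — only by adding a caret to $T_+$ and a caret to $T_-$ at the same leaf simultaneously, which does not change $g\in F$ — until the colors of $(0,0),(1,0),\dots$ read $a,c,b,a,c,b,\dots$, that is, until each vertex is colored by the cyclic successor of the color of the vertex immediately to its left; the terminal pattern $acb\cdots ac$ is then read off.

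The geometric input, which is the $3$-colorable analogue of what is used implicitly in Lemma \ref{lem:alternating}, is the local effect of a caret on $\Gamma$. Adding a caret to $T_+$ and one to $T_-$ at the leaf lying between the vertices $(i,0)$ and $(i+1,0)$ creates, in the cubic graph, a single new ``diamond'' region bounded by the four new caret edges; passing to the dual, this inserts exactly one new vertex $v$ of $\Gamma$ between $(i,0)$ and $(i+1,0)$, adjacent to those two vertices and to nothing else. Hence in any proper $3$-coloring: if $(i,0)$ and $(i+1,0)$ have different colors, the color of $v$ is forced to be the remaining color; if $(i,0)$ and $(i+1,0)$ have the same color, $v$ may be assigned either of the other two colors. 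The same description applies at an extreme leaf, one of the flanking vertices then being an unbounded region. I would establish this by reading off the change to the cubic graph under a caret and dualizing; this is the step I expect to demand the most care.

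Granting this, sweep from left to right, maintaining the invariant that $(0,0),\dots,(k,0)$ are already colored $a,c,b,\dots$ with every step among them a cyclic-successor step. If the color of $(k+1,0)$ is the cyclic successor of that of $(k,0)$, advance. Otherwise — and since two $\Gamma$-adjacent consecutive vertices must differ, only two failure modes can arise — either $(k+1,0)$ carries the cyclic predecessor of the color $\sigma_k$ of $(k,0)$, or $(k+1,0)$ repeats $\sigma_k$. In the first case, insert one caret between $(k,0)$ and $(k+1,0)$: its color is forced to the third color, which is precisely the cyclic successor of $\sigma_k$, and the old vertex $(k+1,0)$ then sits at the next slot with exactly the color the pattern requires, so the invariant is restored with the sweep advanced by two. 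In the second case, insert one caret and choose its color to be the cyclic successor of $\sigma_k$ (legal, since both its neighbors are colored $\sigma_k$); this advances the sweep by one and produces locally a configuration of the first failure type, cleared by one further caret. Each repair is local, so the sweep terminates with the colors reading $a,c,b,a,c,b,\dots$.

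Finally one arranges that the sequence terminates as $\dots a,c$. After the sweep the colors are completely determined by the length, and the length is constrained by the (at most two-colored) local configuration at the rightmost leaf together with the structure of $\Gamma(T_\pm)$ established in Proposition \ref{pro:gamma}; where the terminal symbol is wrong, one inserts or deletes a common caret near the right end, exactly the way the leaf count is managed in the proof of Theorem \ref{thm:vecf}, to reach the case ending in $ac$. Assembled, these steps produce a pair of trees representation of $g$ whose dual graph is colored $acbacb\cdots ac$. The content is concentrated in the local caret computation of the second paragraph and the endpoint/parity bookkeeping of the last one; the central left-to-right sweep is then routine, precisely as in Lemma \ref{lem:alternating}.
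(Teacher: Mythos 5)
Your core mechanism is exactly the paper's: the paper also takes an arbitrary representation of $g$, observes that putting a caret on both $T_\pm$ at the leaf between $(i,0)$ and $(i+1,0)$ inserts a single dual vertex adjacent only to those two, whose color is then forced to be the third color, and uses this move to repair transitions until the pattern reads $acbacb\cdots ac$. (The paper's own write-up is terser than yours: it exhibits only the single local move for the configuration $a,b$ and leaves the sweep and termination implicit.) Two remarks on your extra bookkeeping. First, your second failure mode cannot occur: consecutive regions $(k,0)$ and $(k+1,0)$ share the edge of the cubic graph passing through the leaf point between them, so they are adjacent in $\Gamma(T_+,T_-)$ and always carry different colors; only the ``cyclic predecessor'' repair is ever needed.

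Second, your treatment of the terminal $ac$ does not work as stated. Identify $\{a,c,b\}$ with $0,1,2\in\mathbb{Z}/3$; each forced caret replaces a color step of $\pm1$ by two steps of $\mp1$, so it preserves the total increment $\sigma_n-\sigma_0 \pmod 3$ while increasing the leaf count by one. Consequently, once every step is a cyclic successor the length of the pattern is congruent mod $3$ to a quantity determined by the original coloring, and no amount of inserting or deleting carets near the right end can move it between the residue classes ending in $c$ and ending in $b$. The correct fix is made at the outset: the leftmost and rightmost regions $(0,0)$ and $(n,0)$ are separated by the root edges, hence adjacent, hence differently colored, so $\sigma_n-\sigma_0\equiv\pm1$; choose the labelling of the three colors (equivalently the cyclic order) so that $\sigma_0=a$ and $\sigma_n-\sigma_0\equiv +1$, i.e. $\sigma_n=c$. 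With that normalization your sweep automatically terminates in $\cdots ac$, and the argument is complete. The paper does not spell this out either (it only notes that the statement is permutation-invariant), but since you raised the endpoint issue explicitly, the repair you propose for it is the one step of your write-up that would actually fail.
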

	\begin{proof}
		First note that this lemma should hold for the coloring after applying any permutation on $\{a,b,c\}$ to $acbacb\cdots ac$. We will prove the lemma by induction. Suppose $(T_+,T_-)$ is a pair of trees representation of $g\in\mathcal{F}$. Consider there exists $i\in\mathbb{N}$ with coloring of the vertex $(i,0)$ is $a$ and the vertex $(i+1,0)$ is $b$. Then we apply the same technique in Lemma \ref{lem:alternating}
		\begin{equation*}
		\Gamma(T_+,T_-):\grb{abgamma}\longleftrightarrow(T_+,T_-):\grb{abtree}
		\end{equation*}
		We consider the pair of trees with by putting a caret on both $T_\pm$ on the edge with endpoints $(i+1/2,0)$. Therefore, we have
		\begin{equation*}
		\Gamma(T_+,T_-):\grb{acbgamma}\longleftrightarrow (T_+,T_-):\grb{acbtree}
		\end{equation*}
		
	\end{proof}
	\begin{theorem}
		The 3-colorable subgroup $\mathcal{F}$ is isomorphic to $F_4$.
	\end{theorem}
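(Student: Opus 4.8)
The plan is to identify $\mathcal{F}$ with one of the singly generated groups $G_X$ and then quote Theorem \ref{thm:main}, following the template of the proof of Theorem \ref{thm:vecf}. Since the target is $F_4$, the generator must be a $(1,4)$-tangle, and I would take $X$ to be the binary tree on four leaves recording three consecutive carets in the nesting pattern dictated by the standard colour word $acbacb\cdots ac$ — chosen precisely so that identifying the four leaves of $X$ with a single strand merges four consecutive vertices of the dual graph while keeping the colour pattern $acb\cdots ac$ intact. The claim then splits into two inclusions, $\mathcal{F}\subseteq G_X$ and $G_X\subseteq\mathcal{F}$, after which Theorem \ref{thm:main} gives $\mathcal{F}\cong F_4$.

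For the first inclusion I would start from a non-trivial $g\in\mathcal{F}$ and apply Lemma \ref{lem:acb} to get a pair of trees representation $(T_+,T_-)$ whose dual graph $\Gamma(T_+,T_-)$ has vertices $(0,0),\dots,(n,0)$ coloured $acbacb\cdots ac$; since the colour word has length $\equiv 2\pmod 3$, $T_+$ has $3m+1$ leaves for some $m$. The heart of the matter is the following claim, to be proved by induction on $m$: in the upper tree $\Gamma(T_+)$ there is an index $i$ carrying a chain of left-neighbour relations, i.e. $(i+1,0)$ is joined to $(i,0)$, $(i+2,0)$ to $(i+1,0)$ and $(i+3,0)$ to $(i+2,0)$; equivalently, near the leaves $i+\tfrac12,\dots,i+\tfrac72$ the tree $T_+$ contains a sub-configuration isotopic to $X$. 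The base case $m=1$ is forced by the word $acba$. For the inductive step one picks the rightmost vertex $(p,0)$ whose left-neighbour is $(p-1,0)$, lets $(q,0)$ be the left-neighbour of $(p-1,0)$, and either $q\ge p-3$ so that the desired chain $q\to\cdots\to p$ is already present, or $q<p-3$ and one invokes the induction hypothesis on the induced subgraph supported on $\{(q,0),\dots,(p,0)\}$ — the same bookkeeping as in the claim inside the proof of Theorem \ref{thm:vecf}, with chains of length three in place of two.

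Once a copy of $X$ is located inside $T_+$, I would factor it off; by the choice of $X$ the remaining tree $T_+'$ has $3(m-1)+1$ leaves and its dual graph is again coloured $acb\cdots ac$, so iterating $m$ times peels $T_+$ down to $X$ itself and exhibits $T_+\in Alg(X)_{3m+1}$, and the same argument in the lower half plane gives $T_-\in Alg(X)_{3m+1}$. Hence $g=(T_+,T_-)\in G_X$, proving $\mathcal{F}\subseteq G_X$. The reverse inclusion is immediate: any pair of trees assembled from copies of $X$ yields, by the very definition of $X$, a $3$-colourable dual graph, so $G_X\subseteq\mathcal{F}$. Therefore $\mathcal{F}=G_X\cong F_4$ by Theorem \ref{thm:main}.

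The step I expect to be the main obstacle is the inductive claim. In the $\vec{F}$ situation $\Gamma(T_+,T_-)$ is bipartite and $\Gamma(T_+)$ mirrors the up-carets of $T_+$ transparently; here $\Gamma(T_+,T_-)$ is the dual of a cubic planar graph, so one must pin down exactly which local shapes of $T_+$ are encoded by the left-neighbour relation and check that the length-three chain one extracts really is the specific tangle $X$ rather than one of the other binary trees on four leaves. The companion point that needs a careful picture rather than a slogan is that factoring out $X$ genuinely preserves the colour pattern $acb\cdots ac$, which is what lets the induction be iterated all the way down.
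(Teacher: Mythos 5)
Your overall architecture is the paper's: normalize the coloring to $acbacb\cdots ac$ via Lemma \ref{lem:acb}, locate a copy of a fixed $(1,4)$-tangle $X$ inside $T_+$, iterate to exhibit $T_\pm\in Alg(X)_{3n+1}$, and invoke Theorem \ref{thm:main}. Making the peeling-off iteration and the inclusion $G_X\subseteq\mathcal{F}$ explicit is an expository improvement over the paper, which leaves both implicit. But the one step carrying real content --- the claim that a copy of $X$ must occur in $T_+$ --- is not actually established, for two reasons.

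First, you never pin down which binary tree on four leaves $X$ is. The paper's generator is the balanced tree (a caret both of whose children are carets): this is the shape forced by the local color word $a\,c\,b\,a\,c$, because the region between the two edges of the top caret must receive the third color $b$ and hence sit in the middle position, which places one sub-caret on each descending edge; a left- or right-comb would put the $b$-region off-center and is incompatible. Since the whole point of the claim is that this particular shape occurs, ``three consecutive carets in the nesting pattern dictated by the colour word'' describes the answer rather than defining an object one can induct with. Second, your induction is ported from the proof of Theorem \ref{thm:vecf}, where it rests on Proposition \ref{pro:gamma}: in the spin-model construction every vertex of $\Gamma(T_+)$ has exactly one neighbour to its left, so ``the rightmost $p$ whose left-neighbour is $p-1$'' and ``the left-neighbour $q$ of $p-1$'' are well defined and the induction restricts cleanly to $\{q,\dots,p\}$. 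Here $\Gamma(T_+,T_-)$ is the planar dual of a cubic graph: every edge of $T_+$ contributes a dual edge, consecutive regions $(j,0)$ and $(j+1,0)$ are always adjacent (they share the leaf edge between them), and a vertex may have several neighbours on its left. A ``chain of left-neighbour relations'' through $(i,0),\dots,(i+3,0)$ is therefore present everywhere and detects nothing; what actually characterizes a copy of $X$ is which internal vertex of $T_+$ each of four consecutive leaves hangs from. The paper accordingly runs its induction directly on the tree: the root caret forces the colors $a$, $b$, $c$ on the three regions it bounds, hence one caret on each of its two edges, and one case-splits on where further carets attach, applying the inductive hypothesis to the subtree hanging off the relevant edge. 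You flag this translation as ``the main obstacle,'' and it is exactly that: without it the central claim, and with it the inclusion $\mathcal{F}\subseteq G_X$, remains unproved.
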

	\begin{proof}
		Let $X=\graa{Gen3color}$. Suppose $g\in\mathcal{F}$ having $(T_+,T_-)$ as a pair of trees representation satisfying Lemma \ref{lem:acb} where $T_\pm$ has $3n+1$ leaves.
		\begin{claim}
			There exits $i\in\mathbb{N}$ such that in $T_+$,
			\begin{equation*}    
			\begin{tikzpicture}
			\draw (5,4.6)--(5,4.3);
			\draw (5,4.3)--(3.8,3);
			\draw (5,4.3)--(6.2,3);
			\draw (4.4,3.65)--(4.6,3);
			\draw (5.6,3.65)--(5.4,3);
			\draw[fill] (3.4,3) circle [radius=0.03];
			\draw[fill] (4.2,3) circle [radius=0.03];
			\draw[fill] (5,3) circle [radius=0.03];
			\draw[fill] (5.8,3) circle [radius=0.03];
			\draw[fill] (6.6,3) circle [radius=0.03];
			\node[below] at (3.4,3) {\small $(i,0)$};
			\end{tikzpicture}
			\end{equation*}         
		\end{claim}
		\begin{proof}[\textbf{Proof of Claim}]
			We will prove the claim by induction. The basic step is trivially true since it corresponds to the coloring $acbac$. Suppose this claim holds for binary trees with $3n-2$ leaves. $T_+$ must start with the form    
			\begin{equation*}    
			\begin{tikzpicture}
			\draw (5,4.6)--(5,4.3);
			\draw (5,4.3)--(3.8,3);
			\draw (5,4.3)--(6.2,3);
			\draw (4.4,3.65)--(4.6,3);
			\draw (5.6,3.65)--(5.4,3);
			\node [red] at (4.4,4) {$a$};
			\node [blue] at (5.6,4) {$c$};
			\node [green] at (5,3.65) {$b$};
			\node [blue] at (4.3,3.3) {$c$};
			\node [red] at (5.7,3.3) {$a$};
			\node at (5.3,4.3) {$v_0$};
			\node at (4.0,3.65) {$v_{00}$};
			\node at (5.9,3.65) {$v_{01}$};
			\end{tikzpicture}
			\end{equation*}
			
			If there are no more other vertices connected to $v_{00}$ or $v_{01}$, then the claim holds. 
			
			If there are more vertices on the left edge of $v_{00}$, then the dotted circle part is a tree with $acbacb\cdots ac$. By induction, we know that the claim holds. 
			\begin{equation*}
			\begin{tikzpicture}
			\draw (5,4.6)--(5,4.3);
			\draw (5,4.3)--(3.8,3);
			\draw (5,4.3)--(6.2,3);
			\draw (4.4,3.65)--(4.6,3);
			\draw (5.6,3.65)--(5.4,3);
			\draw (3.8,3)--(3.2,2.35);
			\draw (3.8,3)--(4.0,2.35);
			\node [red] at (4.2,4) {$a$};
			\node [blue] at (5.6,4) {$c$};
			\node [green] at (5,3.65) {$b$};
			\node [blue] at (4.3,3.3) {$c$};
			\node [red] at (5.7,3.3) {$a$};
			\node at (5.3,4.3) {$v_0$};
			\node at (4.0,3.65) {$v_{00}$};
			\node at (5.9,3.65) {$v_{01}$};
			\draw [dashed] (3.6,2.6) circle [radius=0.6];;
			\end{tikzpicture}
			\end{equation*} 
			The same argument holds for the cases that there are more vertices on the right edge of $v_{00}$ or the left (or right) edge of $v_{01}$.
		\end{proof}
		With proving the claim, we obtain that $T_+\in Alg(X)_{3n+1}$. Therefore, $\mathcal{F}\cong F_4$ by Theorem \ref{thm:main}
	\end{proof}
		
	\bibliography{bibliography}

\providecommand{\bysame}{\leavevmode\hbox to3em{\hrulefill}\thinspace}
\providecommand{\MR}{\relax\ifhmode\unskip\space\fi MR }
\providecommand{\MRhref}[2]{%
  \href{http://www.ams.org/mathscinet-getitem?mr=#1}{#2}
}
\providecommand{\href}[2]{#2}
\begin{thebibliography}{Ocn88}

\bibitem[Ati88]{Ati88}
Michael~F Atiyah, \emph{Topological quantum field theory}, Publications
  Math{\'e}matiques de l'IH{\'E}S \textbf{68} (1988), 175--186.

\bibitem[BJ97]{BisJon00}
D.~Bisch and V.~F.~R. Jones, \emph{Singly generated planar algebras of small
  dimension}, Duke Math. J. \textbf{128} (1997), 89--157.

\bibitem[BJ03]{BisJon03}
\bysame, \emph{Singly generated planar algebras of small dimension, part {II}},
  Advances in Mathematics \textbf{175} (2003), 297--318.

\bibitem[BJL]{BJL}
D.~Bisch, V.~Jones, and Z.~Liu, \emph{Singly generated planar algebras of small
  dimension, part {III}}, arXiv:1410.2876.

\bibitem[BS14]{BieS14}
Robert Bieri and Ralph Strebel, \emph{On groups of pl-homeomorphisms of the
  real line}, arXiv preprint arXiv:1411.2868 (2014).

\bibitem[GS15]{GoSa2015}
Gili Golan and Mark Sapir, \emph{On jones' subgroup of r. thompson group $ f$},
  arXiv preprint arXiv:1501.00724 (2015).

\bibitem[JLR]{JLR}
C.~Jones, Z.~Liu, and Y.~Ren, \emph{Planar algebras generated by a 3-box}, In
  preparation.

\bibitem[Jon]{JonPA}
V.~F.~R. Jones, \emph{Planar algebras, {I}}, arXiv:math.QA/9909027.

\bibitem[Jon83]{Jon83}
\bysame, \emph{Index for subfactors}, Invent. Math. \textbf{72} (1983), 1--25.

\bibitem[Jon14]{JonTP}
\bysame, \emph{Some unitary representation of thompson's groups f and t, {I}},
  arXiv:1412.7740 [math.GR] (2014).

\bibitem[Jon16]{JonNogo}
\bysame, \emph{A no-go theorem for the continuum limit of a periodic quantum
  spin chain}.

\bibitem[Liua]{Liuex}
Z.~Liu, \emph{Exchange relation planar algebras of small rank}, To appear
  Trans. AMS. arXiv:1308.5656v2.

\bibitem[Liub]{LiuYB}
\bysame, \emph{Yang-baxter relation planar algebras}, In preparation.

\bibitem[Ocn88]{Ocn88}
A.~Ocneanu, \emph{Quantized groups, string algebras and {G}alois theory for
  algebras}, Operator algebras and applications, Vol.\ 2, London Math. Soc.
  Lecture Note Ser., vol. 136, Cambridge Univ. Press, Cambridge, 1988,
  pp.~119--172.

\bibitem[Pop94]{Pop94}
S.~Popa, \emph{Classification of amenable subfactors of type {II}}, Acta Math.
  \textbf{172} (1994), 352--445.

\bibitem[Pop95]{Pop95}
\bysame, \emph{An axiomatization of the lattice of higher relative commutants},
  Invent. Math. \textbf{120} (1995), 237--252.

\end{thebibliography}
	\bibliographystyle{amsalpha}
\end{document}